\newenvironment{claimproof}[1][Proof of Claim]{\begin{proof}[#1]}{\end{proof}}
\newtheorem{lemma}{Lemma}[section]
\newtheorem*{lemma*}{Lemma}
\newtheorem{theorem}[lemma]{Theorem}
\newtheorem*{theorem*}{Theorem}
\newtheorem{corollary}[lemma]{Corollary}
\newtheorem{proposition}[lemma]{Proposition}
\newtheorem*{proposition*}{Proposition}
\newtheorem*{fact*}{Fact}
\newtheorem*{notation*}{Notation}
\newtheorem*{conventions*}{Conventions}
\newtheorem{remark}[lemma]{Remark}
\newtheorem*{remark*}{Remark}
\newtheorem*{corollary*}{Corollary}
\newtheorem{conjecture}{Conjecture}
\newtheorem*{conjecture*}{Conjecture}
\newtheorem{problem}{Problem}
\newtheorem*{problem*}{Problem}
\newtheorem{question}{Question}
\newtheorem*{question*}{Question}
\newtheorem{assumption*}{Assumption}
\theoremstyle{definition}
\newtheorem{example}{Example}
\newtheorem*{example*}{Example}
\newtheorem{definition}[lemma]{Definition}
\newtheorem*{definition*}{Definition}
\theoremstyle{remark}
\newtheorem*{claim*}{Claim}
\newtheorem*{case*}{Case}
\newcommand{\Z}{\mathbb{Z}}
\newcommand{\Q}{\mathbb{Q}}
\newcommand{\bs}{\backslash}
\newcommand{\satisfies}{\vDash}
\renewcommand\AA{{\mathcal A}}
\newcommand\BB{{\mathcal B}}
\newcommand\CC{{\mathcal C}}
\newcommand\LL{{\mathcal L}}
\newcommand\MM{{\mathcal M}}
\renewcommand\SS{{\mathcal S}}
\newcommand\acl{\hbox{\rm acl}}
\newcommand\into{\hookrightarrow}
\newcommand\contains{\supseteq}
\newcommand\<{\langle}
\renewcommand\>{\rangle}
\newcommand\az{{\aleph_0}}
\newcommand\cont{2^{\aleph_0}}
\newcommand{\Age}{\mathfrak A}
\newcommand{\tp}{\mathrm{tp}}
\def\abar{\bar{a}}
\def\bbar{\bar{b}}
\def\cbar{\bar{c}}
\def\dbar{\bar{d}}
\def\ebar{\bar{e}}
\def\gbar{\bar{g}}
\def\hbar{\bar{h}}
\def\mbar{\bar{m}}
\def\nbar{\bar{n}}
\def\wbar{\bar{w}}
\def\xbar{\bar{x}}
\def\ybar{\bar{y}}
\def\zbar{\bar{z}}
\def\A{{\mathcal A}}
\def\B{{\mathcal B}}
\def \F{{\mathcal F}}
\def\Aind{{N^{{\rm ind}}}}
\newcommand{\Arr}{\mathrm{Supp}}
\newcommand{\QF}{\mathrm{QF}}
\title{Counting siblings in universal theories}
\subjclass{03C15, 03C98}
\author{Samuel Braunfeld}
\address{Department of Mathematics\\
	University of Maryland, College Park\\
	College Park, MD 20742, USA}
\email{sbraunf@umd.edu}
\author[Michael C. Laskowski]{Michael C. Laskowski$^*$}
\address{Department of Mathematics\\
	University of Maryland, College Park\\
	College Park, MD 20742, USA}
\email{laskow@umd.edu}
\thanks{$^*$Partially supported
	by NSF grant DMS-1855789}
\begin{document}

\begin{abstract}
We show that if a countable structure $M$ in a finite relational language is not cellular, then there is an age-preserving $N \supseteq M$ such that $\cont$ many structures are bi-embeddable with $N$. The proof proceeds by a case division based on mutual algebraicity.
\end{abstract}

\maketitle

\section{Introduction}

The model-theoretic condition of cellularity has appeared several times as a dividing line in the complexity of universal theories, including when counting the number of countable models \cite{MPW}, counting the number of finite models as a function of size \cite{LT2}, and counting the number of non-isomorphic substructures of countable models \cite{LM}. In this paper, we present a general approach to proving results about cellularity via another model-theoretic condition, mutual algebraicity. The approach is to first prove that the non-mutually algebraic case is wild, likely using the Ryll-Nardzewski-type characterization of mutual algebraicity from \cite{LT1}. In a companion paper \cite{cma}, we characterize the mutually algebraic non-cellular case. As mutually algebraic structures admit a nice structural decomposition, it is relatively quick to prove the mutually algebraic non-cellular case is still wild. This approach was already largely present in \cite{LT2}, and we apply it here to the question of counting siblings. 

We call two (not necessarily elementarily) bi-embeddable structures {\em siblings} ($f \colon M \into N$ is an embedding if $R(x_1, \dots, x_n) \iff R(f(x_1), \dots, f(x_n))$ for every atomic relation $R$). Given a countable relational structure $M$, our goal is to count the number of siblings of $M$, up to isomorphism. Thomass\'{e} has conjectured the following, counting $M$ as a sibling of itself.

\begin{conjecture} [Thomass\'{e}, \cite{Thom}]
Given a countable structure $M$ in a countable relational language, $M$ has either 1, $\az$, or $\cont$ siblings, up to isomorphism.
\end{conjecture}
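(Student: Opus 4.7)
My plan is to attack the conjecture by a case division organized around two dividing lines, cellularity and mutual algebraicity, following the strategy advertised in the introduction. The hope is that a failure of cellularity forces $\cont$ siblings (at least for an age-preserving extension of $M$), while a cellular structure admits a transparent combinatorial description from which only $1$ or $\az$ siblings arise. The bulk of the work, and the only part I sketch in any detail, lies in the non-cellular half.

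Suppose first that $M$ is not mutually algebraic. I would invoke the Ryll-Nardzewski-style characterization of mutual algebraicity from \cite{LT1}: the failure of mutual algebraicity produces an infinite family of tuples whose types cannot be uniformly bounded by any finite algebraic witness. I would leverage this to construct a single age-preserving $N \supseteq M$ rich enough to absorb $\cont$ many variants $N_\eta$, one for each $\eta \in 2^\az$, where each $N_\eta$ is obtained from $M$ by adding a new thread of elements whose interaction with the non-algebraic configuration encodes $\eta$. Bi-embeddability among the $N_\eta$ and $N$ follows from all of them sharing the age of $N$; non-isomorphism is obtained by reading $\eta$ off an isomorphism-invariant feature, such as the set of finite patterns realized in the definable neighborhood of some distinguished type.

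If instead $M$ is mutually algebraic but not cellular, I would appeal to the structural decomposition of mutually algebraic theories worked out in the companion paper \cite{cma}. That decomposition isolates a coordinate of $M$ along which cellularity fails, meaning unboundedly many non-equivalent configurations are realized; a $2^\az$-indexed twist along this coordinate yields the required continuum family, with bi-embeddability and non-isomorphism again verified from the explicit structural form. The cellular case, where one expects $1$ or $\az$ siblings, I would handle separately by reducing to finitely many parameters describing a cellular structure and enumerating age-preserving variants directly.

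The main obstacle is the non-mutually-algebraic case, where I need the $\cont$-family to be simultaneously flexible enough to be pairwise bi-embeddable and rigid enough to be pairwise non-isomorphic. The embedding direction demands that no $N_\eta$ sees more than the common $N$, which pushes me toward choosing $N$ as a Fra\"\i ss\'e-like universal object for the age of $M$; the non-isomorphism direction demands an invariant sensitive to $\eta$, which the characterization from \cite{LT1} should furnish once $N$ is fixed. Reconciling these two demands---making the non-algebraic witness simultaneously universal and discriminating---is the step I expect to consume most of the effort.
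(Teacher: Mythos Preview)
The statement you are trying to prove is an open conjecture; the paper does not prove it. The paper explicitly proves only a weakening at the level of universal theories in a \emph{finite} relational language (Theorem~\ref{thm:main2}), and it flags in the introduction that the full conjecture for individual structures appears to lie beyond the model-theoretic methods used here.

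Your plan, if it could be carried out, would at best reproduce the paper's theorem rather than the conjecture. The crucial slippage is already visible in your own wording: you aim to show that a non-cellular $M$ has $\cont$ siblings ``(at least for an age-preserving extension of $M$)''. That parenthetical is exactly the gap the paper does not close. Passing to an age-preserving $N\supseteq M$ with $\cont$ siblings says nothing about the sibling count of $M$ itself, and the paper gives concrete counterexamples to any hope of transferring back: $(\omega,s)$, and indeed its expansion by addition and multiplication, is non-cellular yet has only one sibling. So the case division ``cellular vs.\ non-cellular'' simply does not match the trichotomy $1/\aleph_0/\cont$ at the level of individual structures.

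Two further issues: the conjecture is stated for countable relational languages, while the machinery you invoke from \cite{LT1} and \cite{cma} is developed for finite relational languages (and Example~\ref{ex:inf lang} shows the finite-language hypothesis is genuinely used); and your sketch of the non-mutually-algebraic case is much vaguer than what the paper actually needs---the paper requires the detailed analysis of $k$-cliques, grid extensions, array-minimality, and $k$-fullness in Sections~\ref{sec:oi}--\ref{sec:notma}, none of which is hinted at in your proposal. In short, there is no proof here to compare: the target is open, and your outline does not go beyond the paper's partial result.
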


This conjecture has been proven in the case of linear orders \cite{chains}, the gap from 1 to $\az$ proven for $\az$-categorical structures by making use of the monomorphic decomposition \cite{sib}, and the gap from 1 to $\az$ proven for cographs \cite{cograph}. The gap from 1 to $\az$ has also been conjectured in the case of graphs, connected graphs where the siblings must also be connected \cite{twins}, and trees where the siblings must also be trees (as opposed to forests) \cite{BT}, and some partial results obtained in these cases.

If two structures are siblings, they must have the same finite substructures, and so satisfy the same universal theory. Thus, we may coarsen Thomass\'{e}'s conjecture to considering the maximum number of siblings of any model of a given universal theory, which may be viewed as a measure of complexity of that theory. Indeed, for a model to have many siblings, we must produce non-isomorphic structures that look somewhat alike (the similarity required of siblings may be increased by requiring {\em elementary} bi-embeddability, as in \cite{SB}). Complex theories will allow their models to be nuanced enough to admit many siblings. Uncomplicated theories will not allow for such nuance, and so whenever models look alike, they will be the same. (For example, the theory of $n$ disjoint unary predicates, where models are isomorphic once the cardinalities of the predicates match.) The complexity gaps of Thomass\'{e}'s conjecture then call to mind model-theoretic dividing lines. 

However, we note that it is possible for individual structures to be very complicated, yet have few siblings. For example, $\omega$ with successor has only itself as a sibling. Thus the same is true of any expansion, in particular the expansion by the graphs of addition and multiplication. So it is difficult to see how model theory will inform the full conjecture.

Our main theorem confirms the weakening of Thomass\'{e}'s conjecture to the level of universal theories in a finite relational language.
\begin{theorem}[Theorem \ref{thm:main2}] \label{thm:main}
Let $T$ be a universal theory in a finite relational language. Then one of the following holds.
\begin{enumerate}
\item $T$ is finitely partitioned. Every model of $T$ has one sibling.
\item $T$ is cellular. The finitely partitioned models of $T$ have one sibling and the non-finitely partitioned models have $\aleph_0$ siblings.
\item $T$ is not cellular. For every non-cellular $M \models T$, there is some $N \supseteq M$ such that $N \models T$ and $N$ has $2^{\aleph_0}$ siblings. Furthermore, if $T$ is mutually algebraic, we may take $N \succeq M$.
\end{enumerate}
\end{theorem}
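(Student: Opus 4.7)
My plan is to handle the three cases in turn, with the bulk of the work in case (3), where the approach splits according to whether $T$ is mutually algebraic.

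For case (1), a finitely partitioned model decomposes into finitely many ``constant'' parts whose cardinalities are invariants, so any embedding is a bijection on these parts and the model has only one sibling. For case (2), cellular models admit a finite decomposition into cells each of which is a product of a bounded-size skeleton with an index set, and the multiset of cell cardinalities is the isomorphism invariant. Bi-embeddability identifies any two infinite cardinalities (both $\az$), so a cellular model with every cell finite has one sibling, while one with at least one infinite cell has $\az$ siblings obtained by varying each such cell over $\{n : n \geq k_0\} \cup \{\az\}$ for the appropriate threshold $k_0$.

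For case (3), my unifying strategy is to produce an infinite family $\{C_i : i < \omega\}$ of ``independent indicators'' inside some $N \supseteq M$ such that, for each $S \subseteq \omega$, one can form $N_S \subseteq N$ by including $C_i$ with multiplicity $\az$ if $i \in S$ and with some fixed finite multiplicity otherwise. The goal is then to show that all $N_S$ have the same age, are pairwise bi-embeddable (and bi-embeddable with $N$), and that the partition of $\omega$ into ``infinite'' and ``finite'' coordinates is recoverable from $N_S$ up to finite modification; since $|\PP(\omega)/\mathrm{fin}| = \cont$, this yields $\cont$ siblings of $N$. In the mutually algebraic non-cellular case, the indicators $C_i$ come from the structural decomposition in \cite{cma}, and since $M$ is already mutually algebraic one can take $N \succeq M$ realizing enough orbits of components; bi-embeddability is transparent from the decomposition. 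In the non-mutually algebraic case, I would invoke the Ryll-Nardzewski-type characterization of mutual algebraicity from \cite{LT1} to find a formula $\varphi(\xbar, \ybar)$ witnessing non-algebraic behavior, and build $N$ as a (necessarily non-elementary) extension of $M$ realizing countably many independent ``fibers'' of $\varphi$ to serve as indicators.

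The main obstacle will be the non-isomorphism claim in case (3). Bi-embeddability is comparatively easy, following from the fact that the ``finite copy'' of each $C_i$ embeds into the ``infinite copy'' and vice versa, and that these embeddings can be assembled coherently across all $i$ by a Schr\"{o}der--Bernstein-style argument. The harder point is to show that any isomorphism $N_S \to N_{S'}$ must preserve the multiplicity pattern up to finite changes; this requires the indicators $C_i$ to be algebraically distinguishable inside each $N_S$. In the mutually algebraic case this follows from the structural decomposition, but in the non-mutually algebraic case one must argue carefully that the chosen fibers retain definable traces in $N$, so that the ``which fibers are infinite'' datum is an isomorphism invariant modulo finite modification.
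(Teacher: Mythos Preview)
Your outline for cases (1) and (2) contains errors. In case (1), embeddings are \emph{not} bijections on the parts (take a single infinite part with no structure); the paper instead passes to the model companion and shows any age-preserving substructure must contain $\acl(\emptyset)$ and realize each part infinitely. In case (2), you have misread the definition of cellularity: each ``type'' $i$ in a cellular partition already comes with infinitely many tuples $\{\cbar_{i,j}:j\in\omega\}$, so there is no ``cell cardinality'' to vary as you describe. The paper instead produces $\aleph_0$ siblings by deleting coordinates from finitely many $\cbar_{i,j}$ to create maximal finite $1$-cliques of controlled size, using a \emph{separated} cellular partition to ensure these sizes are isomorphism invariants.

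For case (3), your uniform ``indicators plus $\PP(\omega)/\mathrm{fin}$'' scheme diverges from the paper and has a genuine gap in the non-mutually algebraic subcase. The paper requires a further dichotomy you omit: either some $p\in\Arr_k(M)$ supports a \emph{strictly} order-indiscernible array, in which case one encodes $2^{\aleph_0}$ non-isomorphic dense suborders of $\Q$ (Proposition~\ref{one}), or every such array is totally indiscernible, in which case one can build infinite $k$-cliques and an indiscernible $(k,R)$-grid extension. Your plan to ``realize countably many independent fibers of $\varphi$'' is loosely the second branch, but the non-isomorphism step you correctly flag as the obstacle needs machinery you do not have: one must first pass to an \emph{array-minimal} extension of index $k$ (so proper subtuples of the intended $k$-tuples cannot themselves witness distinct array-types), then to a \emph{$k$-full} extension (so pre-existing finite maximal $k$-cliques are unextendable and cannot be confused with the inserted ones), and take the grid extension of \emph{minimum rank} (so the inserted cliques $\AA_q$ are maximal). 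Without these preparations an isomorphism can send your indicators to hybrid tuples or into old cliques, and nothing like ``which indicators are infinite modulo finite'' survives. The paper's actual invariant in this branch is not a multiplicity pattern at all, but the set of sizes of finite, infinitely extendable, maximal $k$-cliques---and it distinguishes the $N_f$ outright, not merely up to $\mathrm{fin}$. In the mutually algebraic branch the paper's invariant is likewise different from yours: rather than multiplicities, it uses which isomorphism types occur among MA-connected components, arranged via a chain $C_0\subsetneq C_1\subsetneq\cdots$ with no back-embeddings.
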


Theorem \ref{thm:main} does have implications at the level of structures, confirming some conjectures of \cite{sib}.

\begin{corollary}[Corollary \ref{cor:universal2}] \label{cor:universal}
Let $M$ be a countable model in a finite relational language that is universal for its age. Then one of the following holds.
\begin{enumerate}
\item $M$ is finitely partitioned, and has one sibling.
\item $M$ is cellular but not finitely partitioned, and has $\aleph_0$ siblings.
\item $M$ is not cellular, and has $\cont$ siblings.
\end{enumerate}
\end{corollary}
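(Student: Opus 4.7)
The plan is to derive this corollary from Theorem~\ref{thm:main} by exploiting that $M$ is universal for its age. Let $T$ denote the universal theory of $M$; the countable models of $T$ are precisely the countable structures whose age is contained in $\Age(M)$, and so universality of $M$ says that every countable model of $T$ embeds into $M$.

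Cases (1) and (2) of the corollary should follow from parts (1) and (2) of Theorem~\ref{thm:main} once one checks that the structure-level properties ``finitely partitioned'' and ``cellular'' for the universal-for-its-age $M$ coincide with the corresponding properties for $T$. Since every model of $T$ embeds into $M$, and since these properties are preserved under passing to substructures, $M$ having either property forces $T$ to have it; the reverse direction holds by definition. From this, part (1) of Theorem~\ref{thm:main} yields case (1) of the corollary, while part (2) splits into case (1) or case (2) of the corollary according to whether $M$ is finitely partitioned.

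The main content is case (3). Suppose $M$ is not cellular. Apply part (3) of Theorem~\ref{thm:main} to produce $N \supseteq M$ with $N \models T$ and $N$ admitting $\cont$ siblings. Because $N$ is a countable model of $T$ and $M$ is universal for its age, there is an embedding $N \into M$. Combined with the inclusion $M \subseteq N$, this gives bi-embeddability of $M$ and $N$, so $N$ is a sibling of $M$. Transitivity of bi-embeddability then promotes every sibling of $N$ to a sibling of $M$, giving $M$ at least $\cont$ siblings. The matching upper bound is immediate since siblings of $M$ are countable structures in a finite relational language.

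The main obstacle I anticipate is the bookkeeping in cases (1) and (2), namely confirming that the structure-level and theory-level versions of ``finitely partitioned'' and ``cellular'' genuinely agree under the universality hypothesis, perhaps by invoking a characterization of these notions at the level of ages. The crux of case (3), using universality to pull $N$ back into $M$ and thereby collapse the $\cont$ siblings of $N$ onto siblings of $M$, is conceptually straightforward once Theorem~\ref{thm:main} is available.
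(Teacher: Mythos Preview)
Your proposal is correct and matches the paper's approach: the paper simply states that the corollary follows immediately from Theorem~\ref{thm:main2} once $T$ admits a structure universal for its age, and your argument spells out precisely this derivation. The key step you identify in case (3)---that universality of $M$ forces $N$ to embed back into $M$, making $M$ and $N$ siblings---is exactly what the paper has in mind.
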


This also implies the result for $\omega$-categorical $M$ in a finite relational language, since then we may pass to its model companion. Example \ref{ex:inf lang} shows Corollary \ref{cor:universal} does not hold for infinite relational languages with finite profile.

We close with some comments connecting our results to previous work on cellularity. First, we note that Theorem \ref{thm:main} is a refinement of the main result of \cite{MPW} that non-cellular universal theories have $\cont$ non-isomorphic models. Second, Corollary \ref{cor:universal} may be seen as a dual to the main result of \cite{LM} that an atomically stable non-cellular countable structure has $\cont$ non-isomorphic substructures. When $M$ is universal for its age, as in Corollary \ref{cor:universal}, siblings are equivalent to age-preserving extensions, and we again see cellularity is the dividing line between $\az$ and $\cont$.

\subsection{Proof sketch} \label{sub:sketch}
The primary intuition behind the proof of the main theorem is that if a universal theory $T$ is non-cellular, then either it is unstable and so has a model encoding $(\Q, <)$, or has a model that in some sense encodes a partition with infinitely many infinite parts. We present three examples corresponding to the three cases of our proof, and explain how to obtain $\cont$ many siblings in each.

\begin{enumerate}
\item Let $M = (\Q, \leq)$. Then any countable non-scattered order is a sibling of $M$, and there are $\cont$ many.
\item Let $M$ be an equivalence relation with infinitely many infinite classes. Then we may pass to an elementary extension $M^* \succ M$ containing infinitely many new infinite classes $\set{\AA_q:q \in \Q}$. For each injective $f\colon \Q \to \omega$, let $M_f$ be obtained by cutting down each $\AA_q$ to size $f(q)$. Then each $M_f$ is a sibling of $M$, and they are pairwise non-isomorphic, as they have distinct sizes of finite classes.
\item Let $M = (\omega, s)$, where $s$ is the successor relation. We first pass to an elementary extension $M' \succ M$ containing infinitely many $\Z$-chains. Then, as in case (2), we may pass to a further elementary extension $M^* \succ M'$ containing infinitely many new $\Z$-chains $\set{\AA_q:q \in \Q}$. For each injective $f\colon \Q \to \omega$, we let $M_f$ be obtained by cutting down each $\AA_q$ to a connected piece of size $f(q)$.
\end{enumerate}

Our proof follows these three examples. The bulk of the work is in generalizing Case 2 to the setting of a non-mutually algebraic $M$. The role played by equivalence classes is generalized to that of $k$-cliques in \S \ref{sec:kcliq}, while \S \ref{sec:oi} guarantees that if we cannot add such $k$-cliques to $M$, then we may find many siblings as in Case 1. Otherwise, for $M$ non-mutually algebraic, we may generalize the proof of Case 2 by adding infinitely many $k$-cliques to $M$, which is done in \S \ref{sec:full}-\ref{sec:notma}. Finally, for $M$ mutually algebraic but non-cellular, we generalize Case 3 in \S \ref{sec:ma}.   

\subsection*{Acknowledgments} The authors are grateful to the anonymous referee for their careful reading and pointing out inaccuracies in a preliminary version of this paper.

\section{Conventions and background}

The following conventions will be in effect throughout this paper, unless otherwise noted.

\medskip

\noindent{\bf$M$ is a countable structure in a finite relational language $\LL$.}

\medskip

\noindent{\bf Types are quantifier-free types, and indiscernibility is with respect to quantifier-free formulas.}

\medskip

We now briefly cover the definitions and results from elsewhere that we will need.

\begin{definition}
A structure $M$ is {\em finitely partitioned} if it admits a finite partition $\set{C_1, \dots, C_n}$ such that $\Pi_i Sym(C_i) \subset Aut(M)$.
\end{definition}

\begin{definition} \label{def:cell}
A structure $M$ is {\em cellular} if, for some $n$ and $k_1, \dots, k_n \in \omega$, it admits a partition $K \sqcup \set{\cbar_{i,j} | i \in [n], j \in \omega}$ satisfying the following.
\begin{enumerate}
\item $K$ is finite, and each $\cbar_{i,j}=(c_{i,j}^1,\dots, c_{i,j}^{k_i})$ has length $k_i$.
\item For every $i \in [n]$ and $\sigma \in S_\infty$, there is a $\sigma_i^* \in Aut(M)$ mapping each $\cbar_{i,j}$ onto $\cbar_{i, \sigma(j)}$ by sending $c_{i,j}^\ell$ to $c_{i, \sigma(j)}^\ell$ for $1 \leq \ell \leq {k_i}$, and fixing $M \bs \bigcup_{j \in \omega} \cbar_{i, j}$ pointwise.
\end{enumerate}

We call such a partition a {\em cellular partition}.
\end{definition}

\begin{example} \label{ex:cell}
Let $M$ be a graph consisting of infinitely many disjoint edges and an infinite clique. Then $M$ is cellular -- we may take $K = \emptyset$, $n=2$, let $\set{\cbar_{0,j}:j\in\omega}$ enumerate the disjoint edges, and $\set{c_{1,j}:j\in\omega}$ enumerate the clique.
\end{example}

Note $M$ is finitely partitioned if and only if $M$ is cellular as witnessed by a partition with each $k_i=1$.  The following definitions are from \cite{MA}, which builds on results from
\cite{MA0}.

\begin{definition} \label{def:ma}
Given a structure $M$ and $n\ge 1$, a set $S\subseteq M^n$ is  {\em mutually algebraic} if there is some $K\in\omega$ such that  $|\{\abar\in S:m\in\abar\}|\le K$ for every $m\in M$.
Let $\LL_M$ be $\LL$ expanded by constant symbols for every element of $M$, and $M_M$ the natural expansion of $M$ to $\LL_M$. An $\LL_M$-formula $\phi(x_1,\dots,x_n)$ is mutually algebraic if it defines a mutually algebraic subset of $ M^n$. We then let $\MM\AA^*(M)$ be the smallest set of $\LL_M$-formulas containing the mutually algebraics,  closed  under adjunction of dummy variables and boolean combinations.

Finally, we say $M$ is {\em mutually algebraic} if every $\LL_M$-formula is equivalent to a formula in  $\MM\AA^*(M)$.
\end{definition}

Note that every unary relation is mutually algebraic. Less obviously, cellular structures are mutually algebraic.

\begin{lemma}
  Let $M$ be mutually algebraic and $N \subset M$ a substructure. Then $N$ is mutually algebraic.
\end{lemma}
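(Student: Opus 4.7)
The plan is first to reduce to atomic $\LL_N$-formulas and then to pull down a decomposition from $M$. I would begin by noting that $\MM\AA^*(N)$ is closed under boolean combinations and we work with quantifier-free formulas, so it suffices to show every atomic $\LL_N$-formula $R(\tbar)$ is equivalent in $N$ to a member of $\MM\AA^*(N)$. Viewing $R(\tbar)$ as an $\LL_M$-formula, I would apply mutual algebraicity of $M$ to obtain $R(\tbar) \equiv_M \psi$ where $\psi = B(\theta_1(\xbar, \dbar_1), \ldots, \theta_k(\xbar, \dbar_k))$ is a boolean combination of $\LL_M$-formulas with each $\theta_i$ mutually algebraic in $M$ with some bound $K_i$ and each $\dbar_i$ a tuple from $M$. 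Because quantifier-free truth transfers between $M$ and $N$ on tuples from $N$, the interpretation of $R(\tbar)$ in $N$ is exactly the restriction $\psi^M \cap N^{|\xbar|}$, which factors as $B$ applied to the sets $\theta_i^M[\dbar_i] \cap N^{|\xbar|}$.

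The central easy observation is that each such $\theta_i^M[\dbar_i] \cap N^{|\xbar|}$ is automatically mutually algebraic in $N$ with bound at most $K_i$: restricting to tuples in $N^{|\xbar|}$ can only decrease the number of tuples containing any fixed $m \in N$, so the defining bound transfers for free. Thus the mutual-algebraicity aspect of the conclusion is immediate.

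The main obstacle is $\LL_N$-definability of these restricted sets, since the parameters $\dbar_i$ appearing in $\theta_i$ may lie in $M \setminus N$, so the formula $\theta_i$ need not itself be an $\LL_N$-formula. My plan to handle this is to exploit the refined local structure of mutually algebraic formulas developed in \cite{MA0, MA}: a mutually algebraic set is essentially determined by a bounded amount of data at each point, so whether a given $\abar \in N^{|\xbar|}$ lies in $\theta_i^M[\dbar_i]$ depends only on the quantifier-free interaction of $\abar$ with the part of $\dbar_i$ that $\abar$ actually meets, together with which coordinates of $\dbar_i$ lie in $N$. Parameters in $\dbar_i \setminus N$ that do not appear in any $\abar \in N^{|\xbar|}$ of interest can then be eliminated at the cost of enlarging the boolean combination, replacing $\theta_i$ by an equivalent (on $N^{|\xbar|}$) formula built from relations of $\LL$ and parameters drawn from $N$. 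Making this parameter-elimination precise — in particular, verifying that the replacement formulas themselves remain mutually algebraic in $N$ — is the technical heart of the proof and the step I expect to consume most of the effort.
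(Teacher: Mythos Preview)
Your reduction to atomic formulas and the observation that traces of mutually algebraic sets remain mutually algebraic as \emph{sets} are both fine. But the step you flag as ``the technical heart'' --- showing that each trace $\theta_i^M[\dbar_i]\cap N^{|\xbar|}$ is $\LL_N$-definable --- is not actually carried out, and the sketch you give for it does not look right. You write that membership of $\abar$ in $\theta_i^M[\dbar_i]$ ``depends only on the quantifier-free interaction of $\abar$ with the part of $\dbar_i$ that $\abar$ actually meets''; but $\abar$ and $\dbar_i$ are typically disjoint tuples, and whether $\theta_i(\abar,\dbar_i)$ holds depends on the full quantifier-free type of $\abar\dbar_i$, not on some restricted portion. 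Mutual algebraicity bounds the number of solutions through each point; it does not by itself give the kind of locality that would let you drop parameters lying in $M\setminus N$. So as written there is a genuine gap: you have named the obstacle correctly but not overcome it, and it is not clear your proposed mechanism can be made to work.

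The paper avoids this definability problem entirely by changing the ambient structure rather than fighting for $\LL_N$-definability. One expands $M$ to $(M,N)$ by a unary predicate for $N$; this is still mutually algebraic since mutual algebraicity is preserved under unary expansions (Theorem~3.3 of \cite{MA}). One then passes to the induced structure $\Aind$ on $N$, in which every trace $D\cap N^n$ of an $(M,N)$-definable set is named by a basic relation, so all the sets you were struggling to define become $0$-definable for free. Since these traces are mutually algebraic as sets, $\Aind$ is mutually algebraic. Finally, $N$ is a reduct of $\Aind$, and mutual algebraicity descends to reducts (Corollary~7.4 of \cite{LT1}). The point is that the two black boxes --- preservation under unary expansion and under reducts --- absorb exactly the difficulty you left open.
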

	\begin{proof}  Let $(M,N)$ be the expansion of $M$ formed by adding a unary predicate $U$ interpreted as $N$.
		Let $\Aind$ denote the expansion of $N$ by relations $P_D$ naming the trace $D\cap N^n$ of every $(M,N)$-definable (with parameters) subset $D\subseteq M^n$, for all $n$. As the set $N$ is definable in $(M,N)$, it is easily checked that $\Aind$ admits elimination of quantifiers.  Moreover, every parameter-definable set
		of $\Aind$ is 0-definable in $\Aind$, and is definable in $(M,N)$.  
		
		\begin{claim*}  $\Aind$ is mutually algebraic.
		\end{claim*}
		
		\begin{claimproof}  We show that every $\Aind$-definable subset $B\subseteq N^n$ is in $\MM\AA^*(\Aind)$.  Since mutual algebraicity is preserved under unary expansions by 
			Theorem 3.3 of \cite{MA}, $(M,N)$ is mutually algebraic, and so $B$ is in $\MM\AA^*((M, N))$, as witnessed by a boolean combination of  sets $\set{Y_1,\dots, Y_m}$,
			each realizing an adjunction of a mutually algebraic formula by dummy variables.
			 As the same is true for each $Y_i\cap N^n$, $B\in\MM\AA^*(\Aind)$.
		\end{claimproof}
		
		It is easily checked that the $L$-structure $N$ is a reduct of $\Aind$, hence $N$ is mutually algebraic by Corollary 7.4 of \cite{LT1}.
	\end{proof}

In addition to mutual algebraicity, the properties of being finitely partitioned and cellular are preserved under passing to a substructure. Thus, they are properties of a universal theory, and so we will say a universal theory $T$ has one of these properties if all of its countable models do.

 We record one additional characterization of mutual algebraicity.  

\begin{theorem} \cite{LT1}*{Theorem 2.1} \label{thm:ma at}
$M$ is mutually algebraic if and only if every atomic $\LL$-formula is $Th(M_M)$-equivalent to a boolean combination of quantifier-free mutually algebraic $\LL_M$-formulas.
\end{theorem}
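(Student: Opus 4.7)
The forward direction is essentially definitional, modulo one technical subtlety. If $M$ is mutually algebraic, then by Definition 2.4 every $\LL_M$-formula, hence every atomic $\LL$-formula $R(\bar{x})$, is $Th(M_M)$-equivalent to a formula in $\MM\AA^*(M)$, namely a boolean combination of mutually algebraic formulas (with dummy variables adjoined). The constituents may a priori be quantified, whereas the theorem demands them to be quantifier-free. I would resolve this by appealing to the structural decomposition of mutually algebraic structures from \cite{MA}: atomic relations decompose into finite unions of quantifier-free mutually algebraic pieces corresponding to the components of that decomposition.

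The backward direction is the substantive content. Assume every atomic $\LL$-formula is a boolean combination of quantifier-free mutually algebraic $\LL_M$-formulas. Then every quantifier-free $\LL_M$-formula lies in $\MM\AA^*(M)$, since atomic formulas are by hypothesis, equalities $x = c$ trivially are, and $\MM\AA^*(M)$ is closed under boolean combinations. To extend this to all $\LL_M$-formulas, I would induct on formula complexity, with the central step being closure of $\MM\AA^*(M)$ under existential quantification modulo $Th(M_M)$.

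To prove this closure, given $\chi(x, \bar{y}) \in \MM\AA^*(M)$, I would place $\chi$ in disjunctive normal form over its mutually algebraic constituents and distribute $\exists$ over the disjunction. Each disjunct has the form $\bigwedge_j \theta_j \wedge \bigwedge_k \neg \theta'_k$. After factoring out conjuncts that do not mention $x$, the positive intersection of the remaining conjuncts defines a mutually algebraic set (intersections preserve mutual algebraicity with bound the maximum of the constants, after grouping variables into the connected components of the hypergraph of defining variables); the subset obtained by further removing the negative conjuncts still lies inside this mutually algebraic set and so inherits its bound; and projecting out $x$ preserves mutual algebraicity, since any element's multiplicity in the projection is bounded by its multiplicity in the original. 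An edge case in which no positive conjunct mentions $x$ reduces to avoiding a finite set of forbidden $x$-values, which is trivially satisfied in any infinite model.

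The main obstacle I anticipate is the bookkeeping of dummy variables in the DNF analysis: one needs the intersection of several mutually-algebraic-with-dummies formulas to again be mutually algebraic on an appropriate variable set, which requires analyzing the hypergraph formed by the defining variables of the constituent formulas and potentially partitioning by its connected components. Making this interaction precise, together with the edge-case handling and the projection step, is the technical heart of the proof; the forward direction's appeal to the structural decomposition from \cite{MA} is a nontrivial black box but is essentially one-shot.
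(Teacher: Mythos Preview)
The paper does not prove this theorem; it is quoted from \cite{LT1} (as their Theorem~2.1) and used as a black box. There is therefore no argument in the present paper to compare your proposal against.

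As to the proposal itself: your backward direction is along the right lines. Closure of $\MM\AA^*(M)$ under existential quantification modulo $Th(M_M)$ via a DNF analysis is the standard route, and you have correctly flagged the connected-components bookkeeping for the conjunctions as the place requiring care.

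The forward direction is where your sketch is thin. The definition of $\MM\AA^*(M)$ permits arbitrary (possibly quantified) mutually algebraic constituents, whereas the theorem demands quantifier-free ones; bridging that gap is the substantive content of this direction, not a technicality. Your one-line appeal to ``the structural decomposition of mutually algebraic structures from \cite{MA}'' does not name a precise statement, and several of the decomposition results in that line of work are themselves proved \emph{using} characterizations of mutual algebraicity at or near the level of the present theorem. You would need to check that whatever you invoke does not already depend on what you are trying to prove. Until that is pinned down, the forward direction is a genuine gap rather than the ``essentially one-shot'' citation you describe.
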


\begin{example}
Consider a structure $(M, E)$ where $E$ is an equivalence relation with $n$ classes, each class infinite. Then the relation $E$ is not mutually algebraic. However, using the constants $m_1, \dots, m_n$ to name one element from each class, we have $E(x, y) \iff \bigvee_i(E(x,m_i) \wedge E(y,m_i))$, which is a boolean combination of quantifier-free mutually algebraic $\LL_M$-formulas. Thus $M$ is mutually algebraic.
\end{example}

\begin{definition}
Given a set $A$, let $\QF_k(A)$ be the set of quantifier-free formulas over $A$ with $k$ variables.

Given a structure $M$, $\cbar \in M^k$, and $A \subset M$, the {\em type of $\cbar$ over $A$} is $\tp(\cbar/A) = \set{\theta(\xbar) \in \QF_k(A) : M \models \theta(\cbar)}$.

Given a structure $M$, a {\em $k$-type over $M$} is some $p(\xbar) \subset \QF_k(M)$ such that there is some elementary extension $N \succ M$ and $\nbar \in N^k$ such that $p(\xbar) = \tp(\nbar/M)$.
\end{definition}

\begin{definition}
Given a structure $M$ and a $k$-type $p$ over $M$, we say {\em $p$ supports an infinite array} if there is some  $N \succ M$ and a set of pairwise disjoint $k$-tuples $\set{\nbar_i \in N^k : i \in \omega}$ such that $\nbar_i \models p$, for every $i$.

We let $\Arr_k(M)$ denote the set of $k$-types over $M$ that support infinite arrays.

We say $p(\xbar)$ is {\em coordinate-wise non-algebraic} if $(x_i\neq b)\in p$ for every $x_i\in \xbar$ and every $b\in M$.
\end{definition}

\begin{lemma} \label{char} Let $M$ be any structure, and $p(\xbar)$ a type over $M$. Then $p \in \Arr_k(M)$
if and only if $p(\xbar)$ is coordinate-wise non-algebraic.
\end{lemma}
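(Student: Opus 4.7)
My plan is to handle the two directions separately. The forward direction is essentially formal. If $p \in \Arr_k(M)$, then $p = \tp(\nbar/M)$ is a \emph{complete} quantifier-free type, so for every $j\in[k]$ and $b\in M$, exactly one of $x_j=b$ and $x_j\neq b$ lies in $p$. Failure of coordinate-wise non-algebraicity forces $x_j=b\in p$ for some $b\in M$; but then every tuple in an array witnessing $p\in \Arr_k(M)$ has $b$ as its $j$-th coordinate, contradicting pairwise disjointness.

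For the converse, I would run a compactness argument over $Th(M_M)$ with the partial type
$$\Sigma := \bigcup_{i\in\omega} p(\xbar_i) \cup \{x_i^j \neq x_{i'}^{j'} : (i,j)\neq(i',j')\}$$
in fresh $k$-tuples $\xbar_i$. Any realization of $\Sigma$ inside an elementary extension of $M$ witnesses $p\in\Arr_k(M)$. A finite subset mentions only finitely many $\xbar_i$ and finitely many formulas from each $p(\xbar_i)$; after conjoining these into a single $\phi(\xbar)\in p$, finite satisfiability reduces to the claim that for every $n$, $M$ itself contains $n$ pairwise disjoint realizations of $\phi$.

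The crux is to deduce this from coordinate-wise non-algebraicity. I would argue by contradiction: let $\abar_1,\dots,\abar_r$ be a maximal pairwise disjoint family of realizations of $\phi$ in $M$, with $r<n$. Maximality yields the first-order sentence
$$M \models \forall \xbar\,\Bigl(\phi(\xbar) \to \bigvee_{s\leq r,\; j,j'\in[k]} x_j = a_s^{j'}\Bigr),$$
whose parameters all lie in $M$, so by elementarity it holds in any $N\succ M$ realizing $p$ via some $\nbar$. But $\nbar\models \phi$ then forces some coordinate $n_j$ to equal some $a_s^{j'}\in M$, contradicting the fact that $(x_j\neq a_s^{j'})\in p$ by coordinate-wise non-algebraicity. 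The only real step to get right is recognizing that the maximality of a finite disjoint family is itself expressible by a first-order sentence with parameters in $M$, and hence transfers to $N$; once that is set up, the contradiction is immediate.
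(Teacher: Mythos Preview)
Your proof is correct in spirit and follows essentially the same route as the paper's: both reduce the converse direction, via compactness, to showing that every $\phi\in p$ has arbitrarily many pairwise disjoint realizations in $M$, and both obtain this by taking a maximal finite disjoint family and noting that a realization of $p$ in an elementary extension is disjoint from $M$ (you phrase the contradiction via the first-order expressibility of maximality; the paper phrases it as producing one more disjoint realization than elementarity allows).

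One small correction: in your $\Sigma$, the inequalities should be $x_i^j\neq x_{i'}^{j'}$ only for $i\neq i'$, not for all $(i,j)\neq(i',j')$. Coordinate-wise non-algebraicity says nothing about distinctness of coordinates \emph{within} a single tuple, so $p$ may well contain $x_1=x_2$; with your stronger disjointness requirement, $\Sigma$ would then be inconsistent already with $p(\xbar_0)$. The definition of ``pairwise disjoint $k$-tuples'' only concerns distinct tuples, and once you make this adjustment your argument goes through unchanged.
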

\begin{proof}
If $(x_i=b)\in p$ for some $x_i$ and some $b\in M$, then any two realizations of $p$ have non-empty intersection, so $p$ does not support an infinite array
(or an array of length 2, for that matter).  Conversely, assume $p$ is coordinate-wise non-algebraic, but $p$ does not support an infinite array.
By compactness, there is some $n$ and some $\theta(\xbar)\in p$ such that
in $M$, there do not exist $n$ pairwise disjoint realizations of $\theta$.  Among all such, choose $\theta$ so that $n$ is minimized, and choose $\set{\bbar_i:i<n}$ from $M$,
pairwise disjoint with $M\models\theta(\bbar_i)$ for each $i$.  Choose $M^*\succeq M$ and $\abar$ from $M^*$ realizing $p$.
As $p$ is coordinate-wise non-algebraic, $\abar$ is disjoint from $M$, hence disjoint from each $\bbar_i$.  Thus $\set{\abar}\cup\set{\bbar_i:i<n}$ gives $(n+1)$ pairwise disjoint realizations of
$\theta(\xbar)$, which is impossible since $M^*\succeq M$.
\end{proof}

\begin{theorem} \label{thm:ma infinite arrays}
If $M$ is not mutually algebraic, then there is some $M' \succ M$ and some $k \in \omega$ such that $\Arr_k(M')$ is infinite.
\end{theorem}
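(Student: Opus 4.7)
The plan is to pass to an $\aleph_1$-saturated elementary extension $M' \succeq M$ and then perform a two-stage extraction: first isolate a ``heavy'' parameter tuple $\mbar$ via a recursion on the arity of $R$, and then vary $\mbar$ to obtain infinitely many completions in $\Arr_{k-j}(M')$.

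By Theorem \ref{thm:ma at}, since $M$ is not mutually algebraic we may fix an atomic $\LL$-formula $R(\xbar)$ of arity $k$ that is not equivalent modulo $Th(M_M)$ to a boolean combination of quantifier-free mutually algebraic $\LL_M$-formulas. In particular $R(M^k)$ itself cannot be mutually algebraic, for otherwise $R(\xbar)$ would already lie in $\MM\AA^*(M)$. Since ``every element appears in at most $K$ tuples of $R$'' is first-order, $R({M'}^k)$ is also not mutually algebraic, so by saturation some $m_1 \in M'$ lies in infinitely many tuples of $R$. A recursion, pigeonholing on the coordinate in which each new parameter lands, produces $\mbar = (m_1,\dots,m_j) \in {M'}^j$ with $j < k$ such that the residual formula $R_{\mbar}(\ybar) := R(\mbar, \ybar)$ is mutually algebraic on $M'$ but still has infinitely many solutions; the recursion must terminate because mutual algebraicity is automatic once the residual arity drops to one. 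Using the MA bound greedily, I then extract a pairwise disjoint sequence $\bbar_1,\bbar_2,\dots \in {M'}^{k-j}$ of realizations of $R_\mbar$, each disjoint from $\mbar$, so the partial type
\[
  P_\mbar(\ybar) \;:=\; \{R(\mbar,\ybar)\} \cup \{y_l \neq m : m \in M',\, l \in [k-j]\}
\]
is finitely satisfiable, and hence extends by compactness to a complete coordinate-wise non-algebraic type $p_\mbar \in \Arr_{k-j}(M')$.

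This produces only a single element of $\Arr_{k-j}(M')$, so the main obstacle is to show that varying $\mbar$ gives infinitely many distinct completions. My plan is to suppose for contradiction that only finitely many distinct $p$ arise as $\mbar$ ranges over valid recursion tuples, and then to patch the resulting finite list of completions---together with their MA residuals---into a boolean combination of quantifier-free mutually algebraic $\LL_{M'}$-formulas equivalent to $R$, contradicting the choice of $R \notin \MM\AA^*(M)$ via elementarity and Theorem \ref{thm:ma at}. The technical heart, and the step I expect to be hardest, is making this patching precise: one must classify the tuples of $R({M'}^k)$ by the equality pattern on $\xbar$, by the indices chosen as heavy parameters, and by the finite choice of completion in $\Arr$, and then verify that each resulting class is defined by a QF mutually algebraic $\LL_{M'}$-formula. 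The finiteness of all these choices---guaranteed by the contradictory hypothesis together with the finite language and $\aleph_1$-saturation---should keep the decomposition a finite boolean combination rather than an infinite disjunction.
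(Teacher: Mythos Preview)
The paper's proof of this theorem is two lines: it cites \cite{LT1}*{Theorem 6.1} to obtain some $M^*\equiv M$ and $k$ with $\Arr_k(M^*)$ infinite, then takes a common elementary extension $M'\succeq M,M^*$ and extends each $p\in\Arr_k(M^*)$ to $\Arr_k(M')$ by compactness. You are instead attempting to reprove the cited result from scratch inside a saturated $M'\succeq M$.

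Your recursion producing $\mbar$ with $R_{\mbar}$ mutually algebraic and infinite is fine, and the extraction of a single $p_{\mbar}\in\Arr_{k-j}(M')$ via Lemma~\ref{char} is correct. The gap is entirely in the patching step, and it is a real one, not just a matter of missing detail. You propose to partition $R({M'}^k)$ into finitely many classes indexed by (equality pattern, heavy-coordinate set, completion in $\Arr$), and then argue each class is quantifier-free mutually algebraic. But the third coordinate of this index does not make sense: elements of $\Arr_{k-j}(M')$ are by definition coordinate-wise non-algebraic over $M'$, so no tuple in ${M'}^{k-j}$ realizes any of them. There is therefore no way to sort actual tuples $\abar\in R({M'}^k)$ by ``which completion they fall under.'' What does vary over $R({M'}^k)$ is the heavy parameter $\mbar$ itself, and there are infinitely many of those; the finiteness of $\{p_{\mbar}\}$ gives you at best finitely many \emph{types over $M'$}, not finitely many \emph{$\LL_{M'}$-formulas}, and you offer no mechanism (definability of types, or otherwise) to collapse the infinite disjunction $\bigvee_{\mbar}(\xbar\!\upharpoonright\!S=\mbar\wedge R_{\mbar}(\xbar\!\upharpoonright\!S^c))$ to a finite one.

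This is exactly the content of the Ryll--Nardzewski-style characterization proved in \cite{LT1}, and it does require real work there; it is not something that falls out of the recursion plus a pigeonhole. If you want a self-contained argument, you would need to reproduce that analysis. As written, your proof produces one element of $\Arr_{k-j}(M')$ and then stops.
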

\begin{proof}
By \cite{LT1}*{Theorem 6.1}, there is some countable $M^* \equiv M$ and some $k$ such that $\Arr_k(M^*)$ is infinite. Let $M'$ elementarily embed $M$ and $M^*$. By compactness, every $p \in \Arr_k(M^*)$ extends to some $p' \in \Arr_k(M')$.
\end{proof}

\begin{definition}
Fix a structure $M$, let $S = (\bbar_i \in M^k : i \in (I, <))$ be a sequence of $k$-tuples, and let $A \subset M$. $S$ is {\em order indiscernible over $A$} if $\tp(\bbar_{i_1}, \dots, \bbar_{i_n}/A) = \tp(\bbar_{j_1}, \dots, \bbar_{j_n}/A)$ whenever $i_1 < \dots < i_n$ and $j_1 < \dots < j_n$ (where, by our convention, $\tp$ is understood to mean quantifier-free type).

$S$ is {\em totally indiscernible over $A$} if $\tp(\bbar_{i_1}, \dots, \bbar_{i_n}/A) = \tp(\bbar_{j_1}, \dots, \bbar_{j_n}/A)$ whenever $i_1, \dots, i_n$ are pairwise distinct, as are $j_1, \dots j_n$.

$S$ is {\em strictly order indiscernible over $A$} if it is order indiscernible over $A$ but not totally indiscernible over $A$.
\end{definition}

\begin{definition}
A countable structure $M$ is {\em universal for its age} if every other countable structure with the same age embeds into $M$. Equivalently, $M$ is countable universal for its universal theory.
\end{definition}

\section{Strictly order indiscernible arrays} \label{sec:oi}

As we are aiming to prove that cellularity is the dividing line between having a model with $\az$ and $\cont$ siblings, we expect non-stability, as manifested by an infinite strictly order-indiscernible sequence of $k$-tuples, to provide a model with $\cont$ siblings. We prove this in the case of infinite arrays, but first we need a definition and easy lemma.

\begin{definition} \label{arraymin}
For $M$ non-mutually algebraic, $M$ is {\em array-minimal of index $k$} if $\Arr_k(M)$ is infinite and there does not exist a $k'<k$ and an age-preserving $N\supseteq M$ for which $\Arr_{k'}(N)$ is infinite.
\end{definition}

\begin{example}
Consider the structure $M = (\Q\times \set{0,1}, \prec, E)$, where $E$ is a binary relation such that $(q,i)E(r,j)$ iff $q=r$ and $i \neq j$, and $\prec$ is a quaternary relation encoding the usual $\leq$ relation between pairs of $E$-connected points. Then there is only 1 coordinate-wise non-algebraic 1-type over $M$, namely the type of an isolated point. The same will be true for any age-preserving $N \contains M$. However, there are infinitely many coordinate-wise non-algebraic 2-types over $M$ -- into any cut of $M$, we may insert an $E$-related pair of points. Thus $M$ is array-minimal of index 2.
\end{example}

\begin{lemma} \label{ez}   If $M$ is not mutually algebraic, then for some $k\ge 1$, there is an age-preserving $M'\supseteq M$ that is array-minimal of index $k$.
Moreover, for every elementary extension $M^*\succeq M'$ and for any substructure $N$ with $M'\subseteq N\subseteq M^*$, $N$ is also array-minimal of index $k$.
\end{lemma}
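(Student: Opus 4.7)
The plan is to combine the minimality of the index $k$ with Lemma \ref{char}, which characterizes membership in $\Arr_k$ as coordinate-wise non-algebraicity.

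First, observe that elementary extensions are age-preserving, since the universal theory determines the age. So Theorem \ref{thm:ma infinite arrays} provides at least one pair $(k', M')$ with $M' \supseteq M$ age-preserving and $\Arr_{k'}(M')$ infinite. Define $k$ to be the least such $k' \geq 1$, and fix a witness $M'$. Then $M'$ is array-minimal of index $k$: any age-preserving extension $\hat{M} \supseteq M'$ is also age-preserving over $M$ (composition of age-preserving extensions), so infinite $\Arr_{k''}(\hat{M})$ with $k'' < k$ would contradict the minimality of $k$.

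For the moreover clause, fix $M^* \succeq M'$ and a substructure $N$ with $M' \subseteq N \subseteq M^*$. From $\Age(M') = \Age(M^*)$ and $\Age(M') \subseteq \Age(N) \subseteq \Age(M^*)$ we get $\Age(N) = \Age(M')$, so $N$ is age-preserving over $M'$ and hence over $M$. The same minimality argument then rules out infinite $\Arr_{k''}$ for $k'' < k$ in any age-preserving extension of $N$, so the only remaining task is to show $\Arr_k(N)$ is infinite.

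For this, I would lift each $p \in \Arr_k(M')$ to a type $q_p \in \Arr_k(N)$ whose restriction to $M'$ is $p$; distinct $p$'s then yield distinct $q_p$'s by restriction, transferring infiniteness. By Lemma \ref{char}, it suffices to show that the partial type $p(\xbar) \cup \{x_j \neq n : j \leq k, \, n \in N\}$ is consistent. By compactness this reduces to checking, for each $\theta(\xbar) \in p$ and each finite $F \subseteq N$, that $\theta(\xbar) \wedge \bigwedge_{j, \, n \in F} x_j \neq n$ is satisfiable. The main obstacle is coordinating the witness of $p \in \Arr_k(M')$ (which a priori lives in an extension of $M'$ that need not contain $N$) with the set $N \subseteq M^*$; this is handled by passing to a common elementary extension $M^{**}$ of $M^*$ and of the witness. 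In $M^{**}$ we have infinitely many pairwise disjoint realizations of $\theta$, of which at most $|F|$ can meet the finite set $F$, so some such realization avoids $F$. Completing the resulting consistent partial type to a type over $N$ gives the desired $q_p$.
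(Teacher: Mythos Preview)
Your argument follows the same strategy as the paper: choose the least $k$ over all age-preserving extensions of $M$, and for the moreover clause lift each $p \in \Arr_k(M')$ to some $q_p \in \Arr_k(N)$ restricting to $p$. The paper does the lifting in two steps (first extend $p$ to $p^* \in \Arr_k(M^*)$ via compactness, which is clean since $M^* \succeq M'$, then restrict $p^*$ to $N$), while you go directly from $M'$ to $N$.

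One point deserves more care. Satisfiability of $\theta \wedge \bigwedge_{j,\,n\in F} x_j \neq n$ in your auxiliary $M^{**}$ only shows the partial type is consistent as a set of quantifier-free formulas; but a \emph{type over $N$} in the paper's sense must be realized in an elementary extension of $N$, and $N$ need not be elementary in $M^{**}$. The detour through $M^{**}$ is in fact unnecessary: since $p$ supports an infinite array, the first-order statement ``there exist $m$ pairwise disjoint realizations of $\theta$'' holds in the witness $M''\succeq M'$ for every $m$, hence in $M'$ by elementarity. So a realization of $\theta$ avoiding any finite $F \subseteq N$ already lives in $M' \subseteq N$. This makes the partial type finitely satisfiable in $N$ itself, hence consistent with the elementary diagram of $N$, and therefore completable to a genuine $q_p \in \Arr_k(N)$ as you intend.
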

\begin{proof}

As $M$ is not mutually algebraic, by Theorem \ref{thm:ma infinite arrays} there is some age-preserving $N \contains M$ and some $\ell \in \omega$ such that $\Arr_\ell(N)$ is infinite. Among all age-preserving extensions of $M$, there is one with the least $k$ such the extension has infinitely many $k$-types that support infinite arrays, and choose that extension to be $M'$.

For the moreover clause, choose any $M'\subseteq N\subseteq M^*$
with $M^*\succeq M'$.  Every $p\in \Arr_k(M')$ has an extension $p^*\in \Arr_k(M^*)$.
As the restriction of each of these types $p^*$ to a type over $N$ also supports an infinite array, $N$ is also array-minimal of index $k$.
\end{proof}

\begin{proposition} \label{one} Suppose $M$ is not mutually algebraic, $M$ is array-minimal of index $k$, and that some 
$p\in \Arr_k(M)$ supports an infinite array $\set{\abar_i:i\in\omega}$ that is strictly order indiscernible over $M$.  Then there is an age preserving $N\supseteq M$ with $2^{\aleph_0}$ siblings.
\end{proposition}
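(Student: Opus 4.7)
The plan is to build an age-preserving $N\supseteq M$ whose isomorphism type encodes a countable linear order, and then vary that order over the $\cont$ isomorphism types of countable non-scattered linear orders, obtaining $\cont$ pairwise non-isomorphic siblings of $N$. The first step is to extract from the strict order indiscernibility of $\set{\bar{a}_i:i\in\omega}$ a quantifier-free formula (over $M$, possibly together with a fixed finite block of array parameters) that induces a linear order of type $\omega$ on the $\bar{a}_i$'s. Concretely, failure of total indiscernibility yields some $n$ and an adjacent-transposition permutation witnessing distinct types; a standard reduction produces a formula $\psi(\bar{x},\bar{y})$ in $2k$ variables with $\psi(\bar{a}_i,\bar{a}_j)\iff i<j$.

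Next, fix an $\aleph_1$-saturated elementary extension $M^*\succeq M$. For any countable linear order $L$, saturation realizes in $M^*$ a pairwise disjoint family $\set{\bar{b}^L_\ell:\ell\in L}$ whose increasing finite subtuples over $M$ realize the same quantifier-free types as the corresponding increasing subtuples of $\set{\bar{a}_i}$. Let $N_L$ be the substructure of $M^*$ on $M\cup\bigcup_{\ell\in L}\bar{b}^L_\ell$, and set $N:=N_{\Q}$. Each $N_L$ is age-preserving over $M$, because $M^*\succeq M$ preserves age. For countable non-scattered $L_1,L_2$, any order embedding $\iota\colon L_1\to L_2$ lifts to an $\LL$-embedding $N_{L_1}\to N_{L_2}$ fixing $M$ pointwise and sending $\bar{b}^{L_1}_\ell\mapsto\bar{b}^{L_2}_{\iota(\ell)}$; preservation of atomic formulas is immediate from the common strictly order indiscernible pattern. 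Since any two countable non-scattered orders are mutually order-embeddable, every $N_L$ with $L$ non-scattered is a sibling of $N$.

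The main obstacle is to show that $L_1\not\cong L_2$ forces $N_{L_1}\not\cong N_{L_2}$. The difficulty is that an isomorphism $f\colon N_{L_1}\to N_{L_2}$ need not send $M$ to $M$, so the $M$-parametrized order $\psi$ does not directly compare across the two structures. To circumvent this we combine the hypotheses: coordinate-wise non-algebraicity of $p$ places every array element in $N_L\setminus M$ with each coordinate realizing a nonalgebraic $1$-type over $M$, while array-minimality of index $k$, via Lemma~\ref{ez}, controls which coordinate-wise non-algebraic $j$-types for $j<k$ can support infinite arrays in any age-preserving substructure, including $N_L$. Using these constraints, one shows that the elements of $N_L$ participating in infinitely many disjoint realizations of some coordinate-wise non-algebraic $k$-type form a canonical iso-invariant subset whose natural $\psi$-induced order is isomorphic to $L$ (up to finite perturbation). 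Since there are $\cont$ pairwise non-isomorphic countable non-scattered linear orders, this yields $\cont$ pairwise non-isomorphic siblings of $N$, as desired.
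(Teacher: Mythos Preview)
Your overall strategy matches the paper's: build an array indexed by $\Q$ inside an elementary extension, take substructures $N_L$ indexed by countable non-scattered orders $L$, and argue that distinct $L$'s give non-isomorphic siblings. The siblinghood part is fine. The gap is in your final paragraph, where you claim to recover $L$ from $N_L$ iso-invariantly.

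Your proposed invariant, ``elements participating in infinitely many disjoint realizations of some coordinate-wise non-algebraic $k$-type,'' is not well-defined without specifying the base set for non-algebraicity; if that base is $M$, it is not preserved by an arbitrary isomorphism $f\colon N_{L_1}\to N_{L_2}$, since $f$ need not fix $M$. Even if you could isolate the array set iso-invariantly, the ordering formula $\psi$ carries parameters from $M$ (and, as you acknowledge, possibly from fixed array tuples), and you give no mechanism for recovering which parameters to use after applying $f$. You also do not address \emph{hybrid} $k$-tuples---tuples that mix coordinates from several $\bar a_j$'s and from $M$---which could a priori satisfy the same formulas and pollute any invariant you try to define. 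The paper handles all of this differently: it does \emph{not} claim that $L_1\not\cong L_2$ implies $N_{L_1}\not\cong N_{L_2}$. Instead it fixes a single finite set $F\subset N^*$ of parameters (built from the witnesses to non-permissible permutations and to the failure of total indiscernibility) and proves that any isomorphism fixing $F$ pointwise forces $L_1\cong L_2$; a counting argument then shows each isomorphism class is countable, which suffices. The control of hybrids (Lemmas~\ref{stub} and~\ref{trans}) is exactly where array-minimality of index $k$ enters, and this is the substantive use of that hypothesis that your sketch omits.
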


\begin{proof} From our assumption on $p$ and compactness, choose an elementary extension $M^*\succeq M$ containing a strictly order-indiscernible array $A=\set{\abar_j:j\in\Q}$ of realizations of $p$. Let $N$ be the substructure of $M^*$ with universe $M\cup A$, and let
  $N^*=M\cup\set{\abar_j:j\le 0}\cup\set{\abar_j:j\ge 1}$.
Choose a family $\F=\set{J_\alpha:\alpha\in 2^\omega}$ of subsets of $(0,1)\cap\Q$ such that the ordered structures $(J_\alpha,\le)$ are pairwise non-isomorphic and each embed $(\Q, \leq)$.
For each $\alpha$, let $N_\alpha\subseteq N$ have universe $N^*\cup\set{\abar_j:j\in J_\alpha}$.
As $(J_\alpha, \leq)$ and $(J_\beta, \leq)$ both embed $(\Q, \leq)$, they are bi-embeddable, and these lift to bi-embeddings of $N_\alpha$ and $N_\beta$ fixing $N^*$ pointwise.

It is true that some of the structures $N_\alpha,N_\beta$ may be isomorphic, but we will find a subfamily of size $2^{\aleph_0}$ that are pairwise non-isomorphic, which 
finishes our argument.  Our method will be to prove that for any given $N_\alpha$, $\set{N_\beta : N_\beta\cong N_\alpha}$ is countable, which suffices.  
In particular, we will fix a uniform finite set $F \subset N^*$ and prove that when $\alpha\neq\beta$, there is
no isomorphism $h:N_\beta\rightarrow N_\alpha$ that fixes $F$ pointwise. Then we cannot have $h \colon N_\beta\rightarrow N_\alpha$ and $h' \colon N_{\beta'}\rightarrow N_\alpha$ with $h(F) = h'(F)$ pointwise, since $h^{-1} \circ h'$ would fix $F$.  As each $N_\alpha$ is countable, there are only countably many possible
images of $F$ under an isomorphism $h \colon N_\beta\rightarrow N_\alpha$, hence $\set{\beta:N_\beta\cong N_\alpha}$
is countable, as required.

Constructing $F$ and proving its suitability will take the rest of the section.
\end{proof}

To begin, we have the following definition that involves permutations of $k$-tuples.
For a given $k$-tuple $\abar_q$ from $N$ and a given $\pi\in Sym(k)$, let $\pi(\abar_q)$ be the permutation of $\abar$ induced by $\pi$.

\begin{definition} \label{def:permis}
 Working in $N$, a permutation $\pi\in Sym(k)$ is {\em permissible} if for some (equivalently for all, by order indiscernibility) $q\in (0,1)\cap \Q$,
$\tp(\pi(\abar_q)/(N\setminus \abar_q))=\tp(\abar_q/(N\setminus \abar_q))$.
\end{definition}

Equivalently, $\pi$ is permissible if and only if the map sending $\abar_q$ to $\pi(\abar_q)$, and otherwise restricting to the identity, is an automorphism of $N$.

The following Lemma is easy because $Sym(k)$ is finite.

\begin{lemma}  \label{G}  There is a finite set $G\subseteq N^*$ such that for any $\pi\in Sym(k)$, $\pi$ is permissible if and only if
for some (equivalently, for every) $q\in (0,1)\cap \Q$,  $\tp(\pi(\abar_q)/G)=\tp(\abar_q/G)$.
\end{lemma}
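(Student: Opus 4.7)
The plan is to collect, for each non-permissible $\pi\in Sym(k)$, a finite witness to non-permissibility lying inside $N^*$, and then take $G$ to be the union of these witnesses. Since $Sym(k)$ is finite, such a $G$ will be finite; the real content of the lemma is being able to choose witnesses from $N^*$ rather than from all of $N\setminus\abar_q$.

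First I would fix some $q_0\in (0,1)\cap\Q$. For each $\pi\in Sym(k)$ that is not permissible, Definition \ref{def:permis} yields a quantifier-free formula $\theta_\pi(\xbar,\ybar)$ and a tuple $\cbar_\pi$ of parameters from $N\setminus\abar_{q_0}$ such that $M^*$ satisfies exactly one of $\theta_\pi(\abar_{q_0},\cbar_\pi)$ and $\theta_\pi(\pi(\abar_{q_0}),\cbar_\pi)$. The entries of $\cbar_\pi$ split into a part coming from $M$ together with elements drawn from finitely many of the array tuples $\abar_{j_1},\dots,\abar_{j_n}$, with indices $j_1<\dots<j_n$ in $\Q\setminus\{q_0\}$.

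Next I would relocate the array parameters off of $(0,1)$ using order indiscernibility of $\{\abar_j:j\in\Q\}$ over $M$. Letting $\ell$ satisfy $j_\ell<q_0<j_{\ell+1}$, I would pick $j'_1<\dots<j'_\ell$ in $(-\infty,0]\cap\Q$ and $j'_{\ell+1}<\dots<j'_n$ in $[1,\infty)\cap\Q$. Then $(\abar_{j_1},\dots,\abar_{j_\ell},\abar_{q_0},\abar_{j_{\ell+1}},\dots,\abar_{j_n})$ and $(\abar_{j'_1},\dots,\abar_{j'_\ell},\abar_{q_0},\abar_{j'_{\ell+1}},\dots,\abar_{j'_n})$ realize the same type over $M$, so substituting $\abar_{j'_i}$ for $\abar_{j_i}$ throughout $\cbar_\pi$ produces a tuple $\cbar'_\pi\subseteq N^*$ for which $\theta_\pi(\abar_{q_0},\cbar'_\pi)$ and $\theta_\pi(\pi(\abar_{q_0}),\cbar'_\pi)$ still disagree in $M^*$. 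I would then set $G=\bigcup_\pi\cbar'_\pi$, a finite subset of $N^*$.

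Finally I would verify the biconditional. If $\pi$ is permissible then $\tp(\pi(\abar_q)/N\setminus\abar_q)=\tp(\abar_q/N\setminus\abar_q)$, so restricting to $G\subseteq N^*\subseteq N\setminus\abar_q$ gives the desired equality for every $q\in(0,1)\cap\Q$. If $\pi$ is not permissible then $\cbar'_\pi\subseteq G$ already witnesses $\tp(\abar_{q_0}/G)\ne\tp(\pi(\abar_{q_0})/G)$; for any other $q\in(0,1)\cap\Q$, every array index appearing in $G$ lies outside $(0,1)$, so $q$ bears the same $<$-relationships with those indices as $q_0$ does, and order indiscernibility supplies $\theta_\pi$ as a witness against $\abar_q$ as well. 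The main obstacle is precisely this relocation step; the finiteness of $Sym(k)$ and of each individual tuple $\cbar_\pi$ make everything else immediate.
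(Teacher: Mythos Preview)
Your proof is correct and follows essentially the same route as the paper's. The paper's proof is terser: it fixes $q$, picks a finite witness $G_\sigma^0\subseteq N\setminus\abar_q$ for each non-permissible $\sigma$, then says ``by order indiscernibility, we may replace $G_\sigma^0$ by a `conjugate' $G_\sigma\subseteq N^*$'' and takes the union. Your relocation argument (splitting $\cbar_\pi$ into its $M$-part and array part, then shifting the array indices out of $(0,1)$ while preserving their order relative to $q_0$) is exactly what that word ``conjugate'' is hiding, and your final paragraph makes explicit why the same $G$ works uniformly for every $q\in(0,1)\cap\Q$.
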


\begin{proof}  Fix any $q\in (0,1)\cap\Q$.  For each $\sigma \in Sym(k)$ that is not permissible, choose a finite subset $G_\sigma^0\subseteq N\setminus\set{\abar_q}$
such that $\tp(\sigma(\abar_q)/G_{\sigma}^0)\neq \tp(\abar_q/G_{\sigma}^0)$.  By order indiscernibility, we may replace $G_\sigma^0$ by a `conjugate' $G_\sigma\subseteq N^*$
so that $\tp(\sigma(\abar_q)/G_{\sigma})\neq \tp(\abar_q/G_{\sigma})$.   Then, by order indiscernibility, $G:=\bigcup\set{G_\sigma:\sigma\in Sym(k), \text{ $\sigma$ not permissible}}$ works not only for $q$ but for any
$q'\in (0,1)\cap\Q$.
\end{proof}

Next, we pinpoint a failure of total indiscernibility over $M$.  
Since $\set{\abar_j:j\in\Q}$ is strictly order indiscernible over $M$ there is an integer $\ell\ge 2$, a 
permutation $\sigma\in Sym(\ell)$ and a formula $\theta(\xbar_1,\dots,\xbar_\ell,\mbar)$ (with $\mbar$ from $M$ and $\lg(\xbar_i)=k$ for each $i$)
such that 
\[N\models\theta(\abar_1,\dots, \abar_\ell,\mbar)\wedge\neg\theta(\abar_{\sigma(1)},\dots,\abar_{\sigma(\ell)},\mbar)\]
As $\sigma$ is a product of transpositions, this implies that there is some $i$, $1\le i<\ell$ such that
\[N\models\theta(\abar_1,\dots,\abar_{i-1},\abar_i,\abar_{i+1},\dots,\abar_\ell,\mbar)\wedge\neg
\theta(\abar_1,\dots,\abar_{i-1},\abar_{i+1},\abar_{i},\dots,\abar_\ell,\mbar)\]
Translating by $i$ and adding dummy variables as needed, there is some $r\ge 2$ such that
\[N\models\theta(\abar_{-r},\dots,\abar_{-1},\abar_0,\abar_1,\dots,\abar_r,\mbar)\wedge
\neg\theta(\abar_{-r},\dots,\abar_{-1},\abar_1,\abar_0,\dots,\abar_r,\mbar)\]
Let $H$ be the parameters $\set{\abar_{-r},\dots, \abar_{-1},\abar_2,\dots,\abar_r,\mbar}\subseteq N^*$
and let $\theta(\xbar,\ybar)$ be the $H$-definable formula mentioned above.

Take $F:=G\cup H\cup\set{\abar_0,\abar_1}$ to be our finite subset of $N^*$.
 Put $\gamma(\xbar):=\bigwedge \tp(\abar_q/F)$ for any $q\in (0,1)\cap \Q$.
 Let \[\delta(\xbar):=\theta(\xbar,\abar_1)\wedge\neg\theta(\xbar,\abar_0)\wedge(\xbar \cap F = \emptyset)\wedge\gamma(\xbar)\]

The following lemma characterizes when
$N\models\delta(\dbar)$ among all permutations of $\abar_q$.

\begin{lemma}  \label{indis}
(1) For $q,r\in [0,1]\cap\Q$, $N\models\theta(\abar_q,\abar_r)$ if and only if $q<r$.  

(2)  For $q\in\Q$ and $\pi\in Sym(k)$, $N\models \delta(\pi(\abar_q))$ if and only if $q\in (0,1)$ and $\pi$ is permissible.
\end{lemma}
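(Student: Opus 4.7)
The plan is to prove part~(1) directly from order indiscernibility of $\set{\abar_j:j\in\Q}$ over $M$, and to prove part~(2) by combining part~(1) with a permutation-pattern analysis and Lemma~\ref{G}.

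For part~(1), I unfold $\theta(\xbar,\ybar)=\theta^*(\abar_{-\rho},\dots,\abar_{-1},\xbar,\ybar,\abar_2,\dots,\abar_\rho,\mbar)$ (writing $\rho$ for the integer denoted $r$ in the paragraph preceding the lemma, to avoid clash with the rational $r$ in the statement). The truth of $\theta(\abar_q,\abar_r)$ is then that of $\theta^*$ applied to the tuple $(\abar_{-\rho},\dots,\abar_{-1},\abar_q,\abar_r,\abar_2,\dots,\abar_\rho,\mbar)$, whose QF type over $M$ depends, by order indiscernibility, only on the order pattern of its index sequence. For $q<r$ in $[0,1]\cap\Q$ this sequence is strictly increasing and matches the original pattern $(-\rho,\dots,-1,0,1,2,\dots,\rho)$ on which $\theta^*$ holds; for $q>r$ the positions of $q$ and $r$ are swapped relative to sorted order, matching the pattern $(-\rho,\dots,-1,1,0,2,\dots,\rho)$ on which $\theta^*$ fails.

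For part~(2), the backward direction is immediate: given $q\in(0,1)$ and $\pi$ permissible, $F\subseteq N^*$ ensures $\pi(\abar_q)\cap F=\emptyset$; permissibility yields $\tp(\pi(\abar_q)/F)=\tp(\abar_q/F)=\gamma$; and the two $\theta$-clauses of $\delta$ are contained in $\gamma$ by part~(1). For the forward direction, suppose $\delta(\pi(\abar_q))$. The disjointness clause rules out $q$ from the (finitely many) indices of $\abar$'s in $F$. To pin down $q\in(0,1)$, I analyze the truth values of $\theta(\pi(\abar_q),\abar_0)$ and $\theta(\pi(\abar_q),\abar_1)$ by applying order indiscernibility to the formula $(\xbar,\ybar)\mapsto\theta(\pi(\xbar),\ybar)$. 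The key observation is that, for any $q$ in a cell of $\Q$ (relative to $F$'s indices) other than $(0,1)$, the two corresponding index sequences produce identical permutation patterns relative to sorted order: for $q$ outside $(-1,2)$ neither $\abar_0$ nor $\abar_1$ affects $q$'s sort rank, and for $q\in(-1,0)$ or $q\in(1,2)$ the value $q$ lies on the same side of both $0$ and $1$. By order indiscernibility the two $\theta$-values then agree, making $\theta\wedge\neg\theta$ inconsistent. Only for $q\in(0,1)$ do the patterns differ --- identity versus swap --- and the two clauses become $\theta(\pi(\abar_0),\abar_1)\wedge\neg\theta(\pi(\abar_1),\abar_0)$, which is consistent precisely when $\pi$ is permissible. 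Once $q\in(0,1)$ is established, permissibility follows from Lemma~\ref{G}: $\gamma(\pi(\abar_q))$ gives $\tp(\pi(\abar_q)/G)=\tp(\abar_{q_0}/G)$ for any $q_0\in(0,1)$; since $G\subseteq N^*$ contains no $\abar_j$ with $j\in(0,1)$, the order-indiscernible swap of $\abar_q$ and $\abar_{q_0}$ (fixing $G$ pointwise) yields both $\tp(\pi(\abar_{q_0})/G)=\tp(\pi(\abar_q)/G)$ and $\tp(\abar_q/G)=\tp(\abar_{q_0}/G)$, whence $\tp(\pi(\abar_{q_0})/G)=\tp(\abar_{q_0}/G)$, so $\pi$ is permissible.

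The main obstacle is the permutation-pattern analysis in the forward direction of part~(2): one must carefully enumerate the cells of $\Q$ relative to the finite set $\{-\rho,\dots,-1,0,1,2,\dots,\rho\}$ and verify in each case that the two relevant $\theta$-index-sequences (differing only in having $\abar_1$ versus $\abar_0$ in the $\ybar$-slot) produce matching sorted-order permutations, so that $(0,1)$ is the only cell for which the two $\theta$-values can differ.
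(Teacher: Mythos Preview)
Your proof is correct and follows essentially the same approach as the paper's. Both argue part~(1) directly from order indiscernibility and handle part~(2) by showing that for $q\notin(0,1)$ the two index sequences $\langle -\rho,\dots,-1,q,0,2,\dots,\rho\rangle$ and $\langle -\rho,\dots,-1,q,1,2,\dots,\rho\rangle$ have the same order type (so the $\theta$-clauses collapse), and then invoke Lemma~\ref{G} once $q\in(0,1)$ is established.

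Two minor remarks. First, your cell-by-cell enumeration is more work than needed: the paper simply splits into $q<0$ and $q>1$, noting in each case that replacing $0$ by $1$ in the $\ybar$-slot does not change the order type of the full index tuple; the finer cells $(-1,0)$, $(1,2)$, etc.\ are subsumed. Second, your parenthetical claim that for $q\in(0,1)$ the clauses ``become $\theta(\pi(\abar_0),\abar_1)\wedge\neg\theta(\pi(\abar_1),\abar_0)$, which is consistent precisely when $\pi$ is permissible'' is not quite what you want and is not obviously true as stated; fortunately you do not rely on it, since you immediately give the correct argument via $\gamma$ and Lemma~\ref{G}. You could simply delete that sentence.
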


\begin{proof}
(1) From above, this is true with $q=0, r=1$,
so the general statement follows by order indiscernibility.

(2)  Suppose $N\models\delta(\pi(\abar_q))$.  We first argue that $q\in (0,1)$.  
Note that $q=0,1$ are forbidden by $\gamma(\xbar)$.  
If $q<0$, then as $\<q,-r\dots,-1,0,2\dots,r\>$ has the same order type
as $\<q,-r,\dots,-1,1,2,\dots, r\>$,  indiscernibility yields
\[N\models\theta(\pi(\abar_q),\abar_0)\leftrightarrow\theta(\pi(\abar_q),\abar_1)\]
so $N\models\neg\delta(\pi(\abar_q))$.
Arguing similarly, $N\models\neg\delta(\pi(\abar_q))$ when $q>1$ as well.
Thus, $q\in (0,1)$.  But now, as $N\models\gamma(\pi(\abar_q))$ we have
$\tp(\pi(\abar_q)/G)=\tp(\abar_q/G)$, so $\pi$ is permissible by Lemma \ref{G}.

Conversely, suppose $q\in (0,1)$ and $\pi$ is permissible.
That $N\models\delta(\abar_q)$ follows from (1).  
As $\pi$ is permissible, $N\models\delta(\pi(\abar_q))$ as well.
\end{proof}

We next show
that  $N\models\neg\delta(\dbar)$ for any $\dbar\in N^k$ that is not a  permutation (permissible or otherwise) of some $\abar_q$. For this, we introduce the notion of a {\em hybrid}, which will be an $n$-tuple for some $n \leq k$ that is not (a permutation of) one of our ``intended'' tuples $\abar_q$. In future sections, we will make analogous definitions of ``unintended'' tuples and prove analogous lemmas to control their behavior.

\begin{definition}
Any automorphism $\sigma$ of $(\Q,\le)$ extends naturally to an automorphism
$\sigma^*\in Aut(N)$ that fixes $M$ pointwise, and maps each $\abar_q$ to $\abar_{\sigma(q)}$.
We call these automorphisms of $Aut(N)$ the {\em standard automorphisms}.  
\end{definition}

\begin{definition}  For any $n \leq k$, $\dbar\in N^n$ is a {\em hybrid} if no permutation of any $\abar_q$ is a subsequence of $\dbar$.
\begin{itemize}
\item  A hybrid $\dbar$ is {\em from $q_1<\dots<q_t$} if $\dbar\subseteq M\cup\abar_{q_1}\cup\dots\cup\abar_{q_t}$, and $\dbar\cap \abar_{q_i}\neq\emptyset$
for every $1\le i\le t$.
\item  If $\dbar$ is from $q_1<\dots<q_t$ and  $\dbar'$ is from $r_1<\dots<r_t$, we say $\dbar$ and $\dbar'$ are {\em associated} if 
$\sigma^*(\dbar)=\dbar'$ for some/any standard automorphism $\sigma^*\in Aut(N)$ extending any automorphism $\sigma\in Aut(\Q,\le)$ with $\sigma(q_i)=r_i$ for each $i$.
\end{itemize}

\end{definition}

The next lemma crucially uses that $M$ is array-minimal of index $k$.

\begin{lemma}  \label{stub}
Suppose $\bbar_q$ is a proper subsequence of $\abar_q$, $\bbar_r$ is a proper subsequence of $\abar_r$ and $\bbar_q$ and $\bbar_r$ are associated.
Then $\tp(\bbar_q/(N\setminus(\abar_q\cup\abar_r)))=\tp(\bbar_r/(N\setminus(\abar_q\cup\abar_r)))$.
\end{lemma}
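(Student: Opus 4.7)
The plan is to argue by contradiction, using strict order indiscernibility to amplify a single failure of type-equality into $\aleph_0$ distinct elements of $\Arr_{k'}(N)$, where $k':=|\bbar_q|<k$. Since $N$ sits between $M$ and $M^*$, the moreover clause of Lemma \ref{ez} makes $N$ array-minimal of index $k$, so $\Arr_{k'}(N)$ is finite---this will be the source of the contradiction.

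Assume $\tp(\bbar_q/(N\setminus(\abar_q\cup\abar_r)))\neq\tp(\bbar_r/(N\setminus(\abar_q\cup\abar_r)))$, and WLOG $q<r$. For each $j\in\Q$, let $\bbar_j$ be the sub-tuple of $\abar_j$ at the coordinate positions extracting $\bbar_q$ from $\abar_q$, so each $\bbar_j$ is associated to $\bbar_q$. For any $j_1<j_2$ in $\Q$, the order-preserving $\sigma\in\mathrm{Aut}(\Q,\le)$ with $\sigma(q)=j_1,\sigma(r)=j_2$ induces a standard automorphism $\sigma^*\in\mathrm{Aut}(N)$, and applying $\sigma^*$ to the assumption yields $\tp(\bbar_{j_1}/(N\setminus(\abar_{j_1}\cup\abar_{j_2})))\neq\tp(\bbar_{j_2}/(N\setminus(\abar_{j_1}\cup\abar_{j_2})))$. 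Thus the failure propagates to every pair of indices in $\Q$.

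Next I would pass, by compactness, to a sufficiently saturated elementary extension $N^*\succeq N$ in which the strictly order indiscernible array extends to one indexed by a dense linear order $L\supsetneq\Q$. For each $j\in\Q$, pick $j^*\in L\setminus\Q$ realizing the same Dedekind cut of $\Q\setminus\set{j}$ as $j$, and set $\cbar_j:=\bbar_{j^*}$. By order indiscernibility over $M$, $\cbar_j\equiv_{N\setminus\abar_j}\bbar_j$; and $\cbar_j\cap N=\emptyset$ since $\abar_{j^*}$ is disjoint from $M$ and from every $\abar_{j'}$ with $j'\in\Q$. So $\tp(\cbar_j/N)$ is coordinate-wise non-algebraic and belongs to $\Arr_{k'}(N)$. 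If $\tp(\cbar_{j_1}/N)=\tp(\cbar_{j_2}/N)$ for some $j_1\neq j_2$, then restricting both sides to $N\setminus(\abar_{j_1}\cup\abar_{j_2})\subseteq N\setminus\abar_{j_i}$---on which $\cbar_{j_i}$ and $\bbar_{j_i}$ agree---would force $\tp(\bbar_{j_1}/(N\setminus(\abar_{j_1}\cup\abar_{j_2})))=\tp(\bbar_{j_2}/(N\setminus(\abar_{j_1}\cup\abar_{j_2})))$, contradicting the propagation step. Hence $\set{\tp(\cbar_j/N):j\in\Q}$ supplies $\aleph_0$ pairwise distinct elements of $\Arr_{k'}(N)$, the desired contradiction.

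The main technical hurdle is producing the fresh realizations $\cbar_j$ with both the prescribed type over $N\setminus\abar_j$ and disjointness from $N$, needed to land in $\Arr_{k'}(N)$. Using an extended indiscernible array supplies them directly as $\bbar_{j^*}$, sidestepping any concern about whether $\tp(\bbar_j/(N\setminus\abar_j))$ can be realized by a tuple disjoint from $N$ via an elementary extension of $N\setminus\abar_j$ itself.
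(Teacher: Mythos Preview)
Your argument is correct and, like the paper's, proceeds by contradiction to violate array-minimality of index $k$ by producing infinitely many distinct elements of some $\Arr_{k'}$ with $k'<k$. The route is genuinely different, however. The paper stays inside $N$: it chooses a dense/codense $D\subseteq\Q$, lets $N_0=M\cup\{\abar_j:j\in\Q\setminus D\}$, and shows that the $\bbar_{q'}$ for $q'\in D$ give infinitely many distinct types in $\Arr_{k'}(N_0)$ (these tuples are already disjoint from $N_0$, so coordinate-wise non-algebraicity is immediate). You instead go \emph{up}, extending the indiscernible array in an elementary extension $N^*\succeq N$ and using the fresh $\bbar_{j^*}$ to witness that $\Arr_{k'}(N)$ itself is infinite, invoking the moreover clause of Lemma~\ref{ez} to know $N$ is array-minimal. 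The paper's approach is a bit more elementary---no further compactness step is needed, and one contradicts the array-minimality of $M$ directly via the age-preserving $N_0\supseteq M$---while your approach is a clean ``add rather than remove'' dual that trades the dense/codense bookkeeping for an appeal to Lemma~\ref{ez} and a standard extension of indiscernibles. Both work; neither buys anything the other does not.
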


\begin{proof}  Assume not.  Clearly, $q\neq r$, so assume $q<r$.  Choose a formula $\phi(\xbar,\ebar)$ with $\ebar\subseteq N\setminus(\abar_q\cup\abar_r)$ such that
\[N\models\phi(\bbar_q,\ebar)\wedge\neg\phi(\bbar_r,\ebar)\]
Choose a dense/codense subset $D\subseteq \Q$ and let $N_0$ be the substructure of $N$ with universe $M\cup\set{\abar_q:q\in (\Q\setminus D)}$.
Clearly, $N_0$ is an age-preserving extension of $M$, so we will obtain a contradiction to $M$ being array-minimal of index $k$ by proving that
$\tp(\bbar_{q'}/N_0)\neq\tp(\bbar_{r'}/N_0)$ for all pairs $q'<r'$ from $D$, where $\bbar_{q'}$ is the subsequence of $\abar_{q'}$ associated to both $\bbar_q$ and $\bbar_r$
and similarly for $\bbar_{r'}$.  (That each of these types is coordinate-wise non-algebraic is immediate, since each $\bbar_{q'}$ is disjoint from $N_0$.  Thus, each of these support
an infinite array by Lemma \ref{char}.)

To see this, fix $q'<r'$ from $D$, and let $\ebar$ be from $s_1 < \dots < s_t$.
As $D$ is dense/codense in $\Q$, there is some $\sigma \in Aut(\Q, \leq)$ sending $q \mapsto q'$, $r \mapsto r'$, and $s_1, \dots, s_t$ into $(\Q \bs D)$. Letting $\sigma^* \in Aut(N)$ be the corresponding standard automorphism, we have

\[N\models \phi(\bbar_{q'},\sigma^*(\ebar))\wedge\neg\phi(\bbar_{r'}, \sigma^*(\ebar))\]

As $\sigma^*(\ebar) \subset N_0$, we have $\tp(\bbar_{q'}/N_0)\neq\tp(\bbar_{r'}/N_0)$, as required.
\end{proof}

Next, we discuss arbitrary hybrids.  In the assumptions of the following lemma, the fact that $\dbar,\dbar'$ are associated implies that the $t$ is the same in both places.

\begin{lemma}  \label{trans}
For $n \leq k$, suppose $\dbar,\dbar'\in N^n$ are associated hybrids with $\dbar$ from $q_1<\dots<q_t$ and $\dbar'$ from $r_1<\dots<r_t$.
Then $\tp(\dbar/N_0)=\tp(\dbar'/N_0)$, where $N_0=N\setminus(\abar_{q_1}\cup\dots\abar_{q_t}\cup\abar_{r_1}\dots\cup\abar_{r_t})$.
\end{lemma}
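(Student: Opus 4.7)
The plan is to proceed by induction on $t$, using Lemma~\ref{stub} together with a ``buffer'' rational as the engine.  The $t=0$ case is vacuous: then $\dbar=\dbar'\subseteq M$ because $\sigma^*$ fixes $M$, so I focus on the inductive step.

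For the inductive step with $t\ge 1$, I will fix an arbitrary finite $\ebar\subseteq N_0$ --- say drawn from $M\cup\abar_{u_1}\cup\dots\cup\abar_{u_m}$ with each $u_j\notin\{q_i,r_i\}$ --- and choose a buffer rational $s>\max\{q_t,r_t,u_m\}$, ensuring $s\notin\{q_i,r_i,u_j\}$.  Let $\bbar_s\subseteq\abar_s$ denote the stub at the same coordinate positions as $\bbar_{q_1}$ (equivalently $\bbar_{r_1}$), and form intermediate tuples $\dbar''$ and $\dbar'''$ by replacing, respectively, $\bbar_{q_1}$ inside $\dbar$ and $\bbar_{r_1}$ inside $\dbar'$ by $\bbar_s$.

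I will then chain three type-equalities.  Applying Lemma~\ref{stub} to the associated pair $(\bbar_{q_1},\bbar_s)$ yields $\tp(\bbar_{q_1}/X_1)=\tp(\bbar_s/X_1)$ for $X_1=N\setminus(\abar_{q_1}\cup\abar_s)$; since $\ebar$ and the tail $\dbar_1:=\dbar\setminus\bbar_{q_1}$ both lie in $X_1$, this gives $\tp(\dbar/\ebar)=\tp(\dbar''/\ebar)$.  Symmetrically, Lemma~\ref{stub} applied to $(\bbar_s,\bbar_{r_1})$ gives $\tp(\dbar'''/\ebar)=\tp(\dbar'/\ebar)$.  The middle step is the inductive one: the tails $\dbar_1$ and $\dbar_1':=\dbar'\setminus\bbar_{r_1}$ are associated hybrids from $q_2<\dots<q_t$ and $r_2<\dots<r_t$ (via the restriction of $\sigma^*$), so the inductive hypothesis delivers $\tp(\dbar_1/N_{01})=\tp(\dbar_1'/N_{01})$ where $N_{01}=N\setminus\bigcup_{i\ge 2}(\abar_{q_i}\cup\abar_{r_i})$; since $\ebar\cup\bbar_s\subseteq N_{01}$, this yields $\tp(\dbar''/\ebar)=\tp(\dbar'''/\ebar)$.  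Composing the three equalities finishes the inductive step.

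The only nontrivial thing is the bookkeeping around $s$: pushing $s$ above every rational in sight is precisely what guarantees that $\ebar\cup\dbar_1\subseteq X_1$, that $\ebar\cup\dbar_1'\subseteq N\setminus(\abar_s\cup\abar_{r_1})$, and that $\ebar\cup\bbar_s\subseteq N_{01}$, so that each invocation of Lemma~\ref{stub} and of the inductive hypothesis is legitimate.  The conceptual content is that the fresh index $s$ serves as a neutral way-station bridging $q_1$ and $r_1$, bypassing the possibly tangled interleaving of $\{q_i\}$, $\{r_i\}$, and the rationals indexing $\ebar$ --- an interleaving that would obstruct a direct one-step reduction to Lemma~\ref{stub}.
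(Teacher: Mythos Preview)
Your argument is correct and is essentially the same engine as the paper's, but you organize it differently.  The paper first isolates the special case where $\{q_1,\dots,q_t\}$ and $\{r_1,\dots,r_t\}$ are disjoint and proves \emph{that} by induction on $t$ (peeling off the last index $q_t$ and applying Lemma~\ref{stub} directly to the pair $(\bbar_{q_t},\bbar_{r_t})$, where disjointness ensures the tail $\hbar$ lies in the allowed parameter set); only afterwards does it reduce the general case to the disjoint one by routing through a fresh sequence $s_1<\dots<s_t$.  You instead fold the ``fresh index'' trick into the inductive step itself: at each stage you introduce a single buffer $s$ and use Lemma~\ref{stub} twice (for $q_1\leftrightarrow s$ and $s\leftrightarrow r_1$), so that no disjointness hypothesis is ever needed and the argument runs in a single pass.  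Your route is slightly more economical---one induction instead of an induction followed by a reduction---while the paper's two-phase structure makes the role of disjointness more visible.  Both hinge on exactly the same uses of Lemma~\ref{stub} and the observation that subtuples of hybrids are hybrids.

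One small presentational point: you should state explicitly that $\bbar_{q_1}:=\dbar\cap\abar_{q_1}$ is a \emph{proper} subsequence of $\abar_{q_1}$ (this is where the hybrid hypothesis enters), since Lemma~\ref{stub} requires properness.  You clearly have this in mind, but it is worth a sentence.
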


\begin{proof} 
This will follow easily from the following special case.
\begin{claim*}
The statement holds if $\set{q_1\dots,q_t}$, $\set{r_1,\dots,r_t}$ are disjoint.  
\end{claim*}
\begin{claimproof}
Under this additional assumption, we argue by induction on $t$.
First, if $t=0$, then $\dbar\subseteq M$.  As $\dbar'$ is associated to $\dbar$, $\dbar'=\dbar$ so the statement is trivially true.

Now assume that the statement is true for $t-1$.  
Write $\dbar:=\hbar\bbar$, where $\hbar$ is from $q_1<\dots<q_{t-1}$ and $\bbar$ is from $q_t$.
Let $\sigma^*\in Aut(N)$ be a standard automorphism extending any automorphism $\sigma\in Aut(\Q,\le)$ extending the map $q_i\mapsto r_i$ for each $i$.
Let $\hbar':=\sigma^*(\hbar)$ and $\bbar':=\sigma^*(\bbar)$. As $\dbar$ is a hybrid, we have that $\bbar$ is a proper subsequence of $\abar_{q_t}$ (up to a permutation, which may be ignored), and so $\bbar'$ is also a proper subsequence of $\abar_{r_t}$, associated to $\bbar$.

To see that $\tp(\dbar/N_0)=\tp(\dbar'/N_0)$, choose any $\phi(\xbar,\ebar)\in \tp(\dbar/N_0)$.  Thus $N\models\phi(\hbar,\bbar,\ebar)$.  By our assumption that $\set{q_1,\dots,q_t}$ is disjoint from $\set{r_1,\dots,r_t}$, we have $\hbar\subseteq N\setminus(\abar_{q_t}\cup\abar_{r_t})$, and so
$N\models\phi(\hbar,\bbar',\ebar)$ by Lemma \ref{stub}.  But now, as $\hbar$ is a hybrid from $q_1<\dots<q_{t-1}$ that is associated to $\hbar'$, our inductive hypothesis
implies that $N\models\phi(\hbar',\bbar',\ebar)$.  Thus, $\phi(\xbar,\ebar)\in \tp(\dbar'/N_0)$ as needed.
\end{claimproof}

For the general case where $\set{q_1,\dots,q_t}$ and $\set{r_1\dots,r_t}$ need not be disjoint, choose any $\phi(\xbar,\ebar)\in\tp(\dbar/N_0)$.
Choose $s_1<\dots<s_t$ disjoint from $\set{q_1,\dots,q_t} \cup \set{r_1\dots,r_t}$ and such that $\ebar$ is disjoint from $\abar_{s_1}\cup\dots\cup\abar_{s_t}$.
Let $\dbar''$ be the hybrid from $s_1<\dots<s_t$ associated to both $\dbar$ and $\dbar'$.    Because of the disjointness, we can apply the
claim to the pairs $\dbar,\dbar''$ and $\dbar',\dbar''$ to obtain
\[N\models\phi(\dbar,\ebar)\leftrightarrow \phi(\dbar'',\ebar)\leftrightarrow \phi(\dbar',\ebar)\]
Thus, $\phi(\xbar,\ebar)\in\tp(\dbar'/N_0)$ as required.
\end{proof}

Finally, we can finish off our problem of identifying realizations of $\delta(\xbar)$ in $N^k$.

\begin{corollary}  \label{when}
For $\dbar\in N^k$,  $N\models\delta(\dbar)$ if and only if $\dbar=\pi(\abar_q)$ for some $q\in (0,1)\cap\Q$ and some permissible
$\pi\in Sym(k)$.
\end{corollary}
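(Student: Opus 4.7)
The plan is to split the two directions. The backward direction is immediate from Lemma \ref{indis}(2). For the forward direction, if $\dbar \in N^k$ happens to be a permutation of some $\abar_q$, Lemma \ref{indis}(2) already forces $q \in (0,1) \cap \Q$ and pins the permutation down as permissible; so the real task is to show that no hybrid $\dbar \in N^k$ can satisfy $\delta(\dbar)$.

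For this, I would argue by contradiction. Suppose $\dbar$ is a hybrid of length $k$ from $q_1 < \dots < q_t$ with $N \models \delta(\dbar)$. The clause $\xbar \cap F = \emptyset$ forces $\dbar$ to avoid each of $\abar_{-r}, \dots, \abar_{-1}, \abar_0, \abar_1, \abar_2, \dots, \abar_r$, hence $\set{q_1, \dots, q_t} \cap \set{-r, \dots, r} = \emptyset$. Using density of $\Q$, I would then choose $r_1 < \dots < r_t$ all strictly less than $-r$ and disjoint from $\set{q_1, \dots, q_t}$, and take $\sigma^* \in Aut(N)$ to be the standard automorphism extending any $\sigma \in Aut(\Q, \le)$ with $\sigma(q_i) = r_i$, so that $\dbar' := \sigma^*(\dbar)$ is a hybrid from $r_1 < \dots < r_t$ associated to $\dbar$.

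Applying Lemma \ref{trans} yields $\tp(\dbar / N_0) = \tp(\dbar' / N_0)$ with $N_0 = N \setminus (\bigcup_i \abar_{q_i} \cup \bigcup_i \abar_{r_i})$. Since the $\abar$-indices appearing in $H \cup \set{\abar_0, \abar_1}$ all lie in $\set{-r, \dots, r}$, which is disjoint from $\set{q_1, \dots, q_t} \cup \set{r_1, \dots, r_t}$, we have $H \cup \set{\abar_0, \abar_1} \subseteq N_0$, and so the $\theta$-statements transfer: $\theta(\dbar, \abar_j) \leftrightarrow \theta(\dbar', \abar_j)$ for $j \in \set{0, 1}$. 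Combined with $\theta(\dbar, \abar_1) \wedge \neg\theta(\dbar, \abar_0)$ from $\delta(\dbar)$, this yields $\theta(\dbar', \abar_1) \wedge \neg\theta(\dbar', \abar_0)$.

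The contradiction will then come from order indiscernibility. With every $r_i < -r < 0$, the ascending-ordered index sequences $(r_1, \dots, r_t, -r, \dots, -1, 0, 2, \dots, r)$ and $(r_1, \dots, r_t, -r, \dots, -1, 1, 2, \dots, r)$ share the same order type, since the differing slot ($0$ vs.\ $1$) falls between $-1$ and $2$ in both. Thus the corresponding tuples of $\abar$'s have the same type over $M$, and because $\dbar'$ is built from $M$ and the $\abar_{r_i}$'s while $\theta$'s remaining parameters lie in $H \cup M$, this gives $\theta(\dbar', \abar_0) \leftrightarrow \theta(\dbar', \abar_1)$, contradicting the previous paragraph. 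The main obstacle I expect is the joint choice of the $r_i$'s: they must be far enough to the left that $H \cup \set{\abar_0, \abar_1} \subseteq N_0$ (so Lemma \ref{trans} genuinely transfers the $\theta$-clauses) and also so that swapping the last coordinate from $0$ to $1$ does not change the order type; taking all $r_i < -r$ accomplishes both at once, and the $\gamma$-clause of $\delta$ is not needed for this direction.
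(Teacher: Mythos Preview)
Your proposal is correct and follows essentially the same route as the paper: reduce to the hybrid case, translate $\dbar$ to an associated hybrid $\dbar'$ sitting to the left of the parameter block, use Lemma~\ref{trans} to transfer the $\theta$-clauses, and then invoke order indiscernibility to collapse $\theta(\dbar',\abar_0)$ and $\theta(\dbar',\abar_1)$. Your version is in fact slightly tidier than the paper's: by taking all $r_i<-r$ (rather than merely $r_i<0$) and by transferring only the $\theta$-clauses (rather than all of $\delta$), you avoid having to worry about whether the parameters from $G$ lie in $N_0$, an issue the paper glosses over.
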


\begin{proof}  First, if $\dbar$ is $\pi(\abar_q)$ for some $q\in \Q$ and $\pi\in Sym(k)$, this is proved in Lemma \ref{indis}.
So assume $\dbar\in N^k$ is not a permutation of any $\abar_q$, i.e. $\dbar$ is a hybrid.  
We argue that $N\models\neg\delta(\dbar)$.  
Say $\dbar$ is from $q_1<\dots<q_t$.  
Choose $r_1<\dots<r_t<0$ from $\Q$, and let $\dbar'$ be associated to $\dbar$
from $r_1<\dots<r_t$.   By order indiscernibility,
\[N\models\theta(\dbar',\abar_0)\leftrightarrow \theta(\dbar',\abar_1)\]
In particular, $N\models\neg\delta(\dbar')$.
From the definition of $\delta(\xbar)$, we may assume $\dbar\cap F=\emptyset$, and so by Lemma \ref{trans} we
also have \[N\models \delta(\dbar)\leftrightarrow\delta(\dbar')\] so $N\models \neg\delta(\dbar)$ as claimed.
\end{proof}

The following lemma will finish the proof of Proposition \ref{one}.

\begin{lemma}  If $f:N_\alpha\rightarrow N_\beta$ is an isomorphism fixing $F$ pointwise, then
$(J_\alpha,\le)\cong(J_\beta,\le)$, hence $\alpha=\beta$.
\end{lemma}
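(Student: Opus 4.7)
The plan is to use the $F$-definable formulas $\delta$ and $\theta$ to reconstruct the linear order $(J_\alpha, \leq)$ from $N_\alpha$ in a way that any isomorphism fixing $F$ must preserve. Since $f$ fixes $F$ pointwise and both $\delta(\xbar)$ and $\theta(\xbar, \ybar)$ have all their parameters in $F = G \cup H \cup \set{\abar_0, \abar_1}$, the isomorphism $f$ preserves both relations.

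First I would identify the realization set $D_\alpha := \delta(N_\alpha^k)$. Since $N_\alpha$ is a substructure of $N$ and $\delta$ is quantifier-free, Corollary \ref{when} applied inside $N$ together with the observation that $\abar_q \in N_\alpha$ for $q \in (0,1) \cap \Q$ precisely when $q \in J_\alpha$ gives
\[D_\alpha = \set{\pi(\abar_q) : q \in J_\alpha,\ \pi \in Sym(k) \text{ permissible}},\]
and analogously for $D_\beta$. Thus $f$ restricts to a bijection $D_\alpha \to D_\beta$. Next, consider the equivalence relation $\dbar \sim \dbar'$ asserting that $\dbar$ and $\dbar'$ enumerate the same underlying set; this is expressible using equality alone and is preserved by any isomorphism. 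Inside $D_\alpha$ the $\sim$-class of $\abar_q$ is exactly $\set{\pi(\abar_q) : \pi \text{ permissible}}$, so $D_\alpha/{\sim}$ is in canonical bijection with $J_\alpha$, and likewise $D_\beta/{\sim}$ with $J_\beta$. Finally I would transfer the order by declaring $[\dbar] \prec [\dbar']$ iff $\theta(\dbar,\dbar')$; since $\theta$ is $F$-definable this is preserved by $f$, so the induced bijection $J_\alpha \to J_\beta$ will be an order-isomorphism, which by the choice of the family $\FF$ forces $\alpha = \beta$.

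The one point that requires care is verifying that $\prec$ is well-defined on $\sim$-classes and realizes the usual order on $J_\alpha$. For distinct $q,r \in (0,1) \cap \Q$ the tuples $\abar_q, \abar_r$ are disjoint, and the automorphism of $N$ witnessing permissibility of $\pi$ sends $\abar_q$ to $\pi(\abar_q)$ while fixing $N \setminus \abar_q$ pointwise; in particular it fixes $\rho(\abar_r)$ as well as every parameter of $\theta$, since $H \cup \set{\mbar}$ is disjoint from $\abar_q$. Therefore $\theta(\pi(\abar_q), \rho(\abar_r)) = \theta(\abar_q, \abar_r)$, and by Lemma \ref{indis}(1) this holds iff $q < r$, so $\prec$ agrees with the order on $J_\alpha$ under the bijection above. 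This independence of $\theta$ from the chosen representatives is the main (and only nontrivial) obstacle in the argument; once it is in hand, the order-isomorphism of $(J_\alpha,\leq)$ and $(J_\beta,\leq)$ is immediate, completing the proof.
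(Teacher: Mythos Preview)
Your proof is correct and follows essentially the same approach as the paper: both use Corollary~\ref{when} to identify the realizations of $\delta$ in $N_\alpha$ as the permissible permutations of the $\abar_q$ for $q\in J_\alpha$, and then use Lemma~\ref{indis}(1) together with the permissibility automorphisms to recover the order via $\theta$. The only cosmetic difference is that the paper defines $f^*:J_\alpha\to J_\beta$ directly and composes $f$ with a permissibility automorphism to check order-preservation, whereas you pass to the quotient $D_\alpha/{\sim}$ and verify that $\theta$ descends; these are two phrasings of the same argument.
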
  

\begin{proof}  We define a map $f^*:J_\alpha\rightarrow J_\beta$ as follows.  Given $q\in J_\alpha$, note that
$N\models\delta(\abar_q)$.  Thus, $N\models \delta(f(\abar_q))$ as well.  By Corollary \ref{when} $f(\abar_q)=\pi(\abar_s)$
for some $s\in (0,1)$ and some permissible permutation $\pi$.  As $f(\abar_q)\subseteq N_\beta$, we must have
$s\in J_\beta$.  Put $f^*(q):=s$.  It is clear that $f^*:J_\alpha\rightarrow J_\beta$ is bijective.

To see that $f^*$ is order-preserving, choose $q<q'$ from $J_\alpha$.  Write $f(\abar_q)$ as $\pi(\abar_s)$ and write
$f(\abar_{q'})$ as $\pi'(\abar_{s'})$.  As both $\pi,\pi'$ are permissible, there is a $\sigma \in Aut(N)$ sending $\pi(\abar_s)\mapsto \abar_s$, $\pi'(\abar_{s'})\mapsto \abar_{s'}$, and fixing everything else.
Then the composition $g:= \sigma\circ f:N_\alpha\rightarrow N_\beta$ is an isomorphism fixing $F$ pointwise
sending $\abar_q\mapsto\abar_s$, $\abar_{q'}\mapsto \abar_{s'}$.

By Lemma \ref{indis}(1),
$N\models\theta(\abar_q,\abar_{q'})$.  As $\theta$ is quantifier-free, 
$N_\alpha\models\theta(\abar_q,\abar_{q'})$.  Since $g$ is an isomorphism fixing $F$ pointwise,
$N_\beta \models\theta(\abar_{s},\abar_{s'})$, and hence $N\models\theta(\abar_{s},\abar_{s'})$.    By Lemma \ref{indis}(1) again, $s<s'$.
That is, $f^*(q)<f^*(q')$.
\end{proof}

\section{$k$-cliques} \label{sec:kcliq}

In this section, we introduce $k$-cliques, which will serve the function of equivalence classes from Case 2 of \S \ref{sub:sketch}.

\medskip

\noindent{\bf Fix a finite relational $\LL$ with maximal arity $r$  and an ambient $\LL$-structure $M$ throughout this section.}

\medskip

For $n\ge r$, call a quantifier-free $\LL$-formula $\phi(x_1,\dots,x_n)$ {\em q.f.-complete} if $\phi(x_1,\dots,x_n)$ decides every atomic $R(\ybar)$ for every
permutation $\ybar$ of a subsequence of $(x_1,\dots,x_n)$.
As $\LL$ is finite relational, there is a finite set $\SS_n$ of q.f.-complete $\phi(x_1,\dots,x_n)$ such that for every $\LL$-structure $M$ and every $\cbar\in M^n$,
$\tp(\cbar)$ contains precisely one element of $\SS_n$.  Fix such a set $\SS_n$ for every $n\ge r$.

\begin{definition}   Fix $k\ge 1$ and let  $M^{(k)}:=\set{\abar\in M^k: a_i\neq a_j \text{ for }  i \neq j}$.   
\begin{itemize}
\item  A pair   $\abar,\bbar\in M^{(k)}$ is {\em exchangeable}, written $\abar \sim \bbar$, if  
$\abar\cap\bbar=\emptyset$ and $\tp(\abar\bbar/(M\setminus(\abar\cup\bbar)))=\tp(\bbar\abar/(M\setminus(\abar\cup\bbar)))$.
\item  A {\em $k$-clique} is a non-empty set $\A=\set{\abar_i:i\in I}\subseteq M^{(k)}$ such that $\abar_i,\abar_j$ are exchangeable whenever $i \neq j$.
\item The {\em size} of $\A$ is simply its
cardinality $|\A|$.
\item Given a $k$-clique $\AA$, we denote the set of all $a\in M$ such that $a\in\abar_i$ for some $\abar_i\in \AA$ by $\bigcup\A$.  
Because of the disjointness, $|\bigcup\A|=k\cdot|\A|$.
\end{itemize}
\end{definition}

\begin{remark}
Similar to Definition \ref{def:permis},  for all $\abar,\bbar\in M^{(k)}$ with $\abar\cap\bbar=\emptyset$,
$\abar$ and $\bbar$ are exchangeable if and only if the bijection swapping them is an automorphism of $M$ if and only if
\[M\models \forall\ybar[\ybar\cap(\abar\cup\bbar)=\emptyset\rightarrow \phi(\abar,\bbar,\ybar)\leftrightarrow\phi(\bbar,\abar,\ybar)]\]
for every $\phi(\xbar_1,\xbar_2,\ybar)\in \SS_{2k+r}$ with $\lg(\ybar)=r$.
As $\SS_{2k+r}$ is finite, it follows that exchangeability is definable on $M^{(k)}$.  
However,  unless $k=1$ exchangeability need not be transitive, due to the disjointness condition.
\end{remark}

\begin{definition}  A set of disjoint $k$-tuples $\A = \set{\abar_i : i \in I}\subseteq M^{(k)}$ is {\em totally indiscernible over its complement} if it is totally indiscernible over $M \bs \bigcup A$.
\end{definition}

\begin{lemma} \label{lemma:ti down}
Let $\AA\subseteq M^{(k)}$ be totally indiscernible over its complement, and let $\BB \subset \AA$. Then $\BB$ is totally indiscernible over its complement.
\end{lemma}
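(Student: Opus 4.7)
The goal is to upgrade the indiscernibility of $\BB$ from indiscernibility over $M\setminus\bigcup\AA$ (which we inherit for free) to indiscernibility over the strictly larger set $M\setminus\bigcup\BB$. The key decomposition is
\[
M\setminus\bigcup\BB \;=\; \bigl(M\setminus\bigcup\AA\bigr)\;\cup\;\bigcup(\AA\setminus\BB),
\]
so the only ``new'' parameters we must absorb are elements coming from the tuples in $\AA\setminus\BB$. Because each such element lies in a specific tuple $\abar\in\AA\setminus\BB$, these extra parameters can be encoded back as coordinates of full tuples that are themselves part of the totally indiscernible family $\AA$.

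\textbf{Key step.} Fix pairwise distinct $i_1,\dots,i_n$ from the index set of $\BB$, a second pairwise distinct sequence $j_1,\dots,j_n$ from the index set of $\BB$, a quantifier-free formula $\phi(\xbar_1,\dots,\xbar_n,\ybar)$, and a tuple $\ebar$ from $M\setminus\bigcup\BB$. Split $\ebar=\ebar_0\,\dbar$, where $\ebar_0\subseteq M\setminus\bigcup\AA$ and $\dbar\subseteq\bigcup(\AA\setminus\BB)$. Let $k_1,\dots,k_t$ enumerate the distinct indices in $\AA\setminus\BB$ for which $\dbar$ uses at least one coordinate of $\abar_{k_s}$. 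Then one can mechanically produce a quantifier-free formula $\psi$ so that
\[
M\models\phi(\bbar_{i_1},\dots,\bbar_{i_n},\ebar_0,\dbar)
\;\Longleftrightarrow\;
M\models\psi(\bbar_{i_1},\dots,\bbar_{i_n},\abar_{k_1},\dots,\abar_{k_t},\ebar_0),
\]
by replacing each coordinate used from $\abar_{k_s}$ by the appropriate variable in a fresh $k$-tuple of variables $\zbar_s$.

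\textbf{Conclusion.} Now apply total indiscernibility of $\AA$ over $M\setminus\bigcup\AA$ to the two sequences
\[
(i_1,\dots,i_n,k_1,\dots,k_t)\quad\text{and}\quad(j_1,\dots,j_n,k_1,\dots,k_t).
\]
Both are pairwise distinct: the $i$'s (resp.\ $j$'s) are pairwise distinct by hypothesis, the $k$'s are pairwise distinct by construction, and the $i$'s (resp.\ $j$'s) are disjoint from the $k$'s because $\BB$ and $\AA\setminus\BB$ are disjoint subsets of $\AA$. Hence $\psi$ with parameter $\ebar_0\in M\setminus\bigcup\AA$ gives the same truth value on both sequences, which translates back to
\[
M\models\phi(\bbar_{i_1},\dots,\bbar_{i_n},\ebar)\;\Longleftrightarrow\;M\models\phi(\bbar_{j_1},\dots,\bbar_{j_n},\ebar),
\]
as required.

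\textbf{Expected difficulty.} There is no real obstacle; the lemma is essentially bookkeeping. The only thing to be careful about is verifying that the combined index sequences remain pairwise distinct so that the hypothesis of total indiscernibility applies, and that the rewriting step $\phi\rightsquigarrow\psi$ is well-defined (i.e.\ each element of $\dbar$ is assigned to a fixed tuple and coordinate in $\AA\setminus\BB$, even if it appears multiple times in $\dbar$, which is unproblematic).
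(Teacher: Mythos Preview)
Your proposal is correct and follows essentially the same approach as the paper: split the parameters over $M\setminus\bigcup\BB$ into those outside $\bigcup\AA$ and those coming from tuples in $\AA\setminus\BB$, append those full tuples to both sequences, and apply total indiscernibility of $\AA$ over $M\setminus\bigcup\AA$. The paper phrases it in terms of types rather than rewriting a formula $\phi$ into $\psi$, but the content is identical.
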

\begin{proof}
Let $\set{\bbar_1, \dots, \bbar_n}$, $\set{\bbar'_1, \dots, \bbar'_n}\subset \BB$ and let $\set{c_1, \dots, c_m} \subset M\bs\bigcup \BB$. By relabeling, let $\ell$ be such that $c_i \in \bigcup\AA$ iff $i \leq \ell$, and let $\abar_1, \dots, \abar_j \in \AA$ be such that $c_i \in \abar_1 \cup \dots \cup \abar_j$ for $i \leq \ell$. 

As $\AA$ is totally indiscernible over its complement, we have \[\tp(\bbar_1, \dots, \bbar_n,\abar_1, \dots, \abar_j/c_{\ell+1}, \dots c_m) = \tp(\bbar'_1, \dots, \bbar'_n,\abar_1, \dots, \abar_j/c_{\ell+1}, \dots c_m)\]
Thus, as desired, we have
\[\tp(\bbar_1, \dots, \bbar_n/c_{1}, \dots c_m) = \tp(\bbar'_1, \dots, \bbar'_n/c_{1}, \dots c_m)\]
\end{proof}

\begin{proposition} \label{prop:ti equiv}
 Let $\AA\subseteq M^{(k)}$ be pairwise disjoint.   Then $\AA$ is totally indiscernible over its complement if and only if $\AA$ is a $k$-clique.
\end{proposition}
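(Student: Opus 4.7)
The plan is to prove the two directions separately. For the forward direction, I would apply the previous Lemma~\ref{lemma:ti down}: given any two distinct $\abar_i, \abar_j \in \AA$, taking the subset $\{\abar_i, \abar_j\}$ in that lemma shows $\{\abar_i, \abar_j\}$ is totally indiscernible over $M \setminus (\abar_i \cup \abar_j)$. Applying the definition of total indiscernibility to the two enumerations $(\abar_i, \abar_j)$ and $(\abar_j, \abar_i)$ of this 2-element set yields exactly the exchangeability of $\abar_i$ and $\abar_j$; since pairwise disjointness was assumed, $\AA$ is a $k$-clique.

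For the backward direction, the key tool is the observation from the remark preceding the proposition: if $\abar, \bbar \in M^{(k)}$ are disjoint and exchangeable, then the coordinate-wise swap $\sigma_{\abar, \bbar}$ (exchanging the $\ell$th entry of $\abar$ with the $\ell$th entry of $\bbar$ for each $\ell \leq k$ and fixing $M \setminus (\abar \cup \bbar)$ pointwise) is an automorphism of $M$. The strategy is to lift every finite permutation of elements of $\AA$ to an automorphism of $M$ fixing $M \setminus \bigcup \AA$ pointwise. Concretely, for any finite subset $\BB = \{\abar_1, \dots, \abar_n\} \subseteq \AA$ and any $\pi \in Sym(n)$, since transpositions generate $Sym(n)$ and each transposition $(ij)$ is realized by the swap-automorphism $\sigma_{\abar_i, \abar_j}$, an appropriate finite composition $\sigma_\pi$ of these swap-automorphisms permutes $\BB$ according to $\pi$ and fixes $M \setminus \bigcup \BB \supseteq M \setminus \bigcup \AA$ pointwise.

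Total indiscernibility then follows immediately. Let $\bbar_1, \dots, \bbar_n$ and $\bbar'_1, \dots, \bbar'_n$ be two sequences of pairwise distinct elements from $\AA$. Set $\CC \subseteq \AA$ to be the (finite) union of their ranges, extend each sequence to a full enumeration of $\CC$, and observe that the two enumerations of $\CC$ differ by some permutation $\pi$ of $\CC$. The preceding paragraph provides an automorphism $\sigma_\pi$ of $M$ realizing $\pi$ while fixing $M \setminus \bigcup \AA$ pointwise, and this $\sigma_\pi$ sends $\bbar_i$ to $\bbar'_i$ for each $i \leq n$, giving the required equality of types over $M \setminus \bigcup \AA$. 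I do not anticipate any significant obstacle here: the only gap between exchangeability (a pairwise condition) and total indiscernibility (a global condition) is bridged by the standard fact that transpositions generate the symmetric group, together with the observation that every relevant swap-automorphism fixes $M \setminus \bigcup \AA$ pointwise, so arbitrary compositions do too.
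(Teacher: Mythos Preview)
Your proposal is correct and follows essentially the same approach as the paper. The forward direction is identical (invoke Lemma~\ref{lemma:ti down} on a two-element subset), and the backward direction rests on the same key observation, namely that exchangeability yields swap-automorphisms and that transpositions generate the symmetric group; the paper organizes this as an induction on $|\{\abar_{i_1},\dots,\abar_{i_n}\}\setminus\{\abar_{i'_1},\dots,\abar_{i'_n}\}|$ rather than your extension-to-a-common-enumeration trick, but the content is the same.
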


\begin{proof}  
$(\Rightarrow)$ Suppose $\AA$ is totally indiscernible over its complement, and let $\abar_i, \abar_j \in \AA$. Then by Lemma \ref{lemma:ti down}, $\set{\abar_i, \abar_j}$ is totally indiscernible over its complement. Thus $\abar_i$ and $\abar_j$ are exchangeable.

$(\Leftarrow)$ Suppose $\AA = \set{\abar_i : i \in I}$ is a $k$-clique. Let $(i_1, \dots, i_n)$, $(i_1',\dots, i_n')\in I^n$. We proceed by induction on $m = |\set{\abar_{i_1}, \dots, \abar_{i_n}} \bs \set{\abar_{i'_1}, \dots, \abar_{i'_n}}|$.

If $m=0$ then there is some $\sigma \in Sym(n)$ such that $\sigma(i_1, \dots, i_n) = (i'_1, \dots, i'_n)$. As $\sigma$ can be written as a product of transpositions, we have $\tp(\abar_{i_1},\dots,\abar_{i_n}/(M\setminus\bigcup\A))=\tp(\abar_{i'_1},\dots,\abar_{i'_n}/(M\setminus\bigcup\A))$.

Now suppose $m = \ell+1$. After permuting the tuples, which we have seen does not affect their type, we may suppose $\abar_{i_1} \not\in \set{\abar_{i'_1}, \dots, \abar_{i'_n}}$ and $\abar_{i'_1} \not\in \set{\abar_{i_1}, \dots, \abar_{i_n}}$. Using that $a_{i_1}, a_{i'_1}$ are exchangeable for the first equality and the inductive hypothesis for the second, we have $\tp(\abar_{i_1},\dots,\abar_{i_n}/(M\setminus\bigcup\A))=\tp(\abar_{i'_1},\abar_{i_2}, \dots,\abar_{i_n}/(M\setminus\bigcup\A)) = \tp(\abar_{i'_1},\dots,\abar_{i'_n}/(M\setminus\bigcup\A))$.
\end{proof}

\begin{lemma} \label{meld}
Suppose $\AA$ and $\BB$ are $k$-cliques, $\AA\cap\BB \neq \emptyset$, and $\bigcup (\AA \bs \BB) \cap \bigcup(\BB\bs \AA) = \emptyset$. Then $\AA\cup\BB$ is a $k$-clique.
\end{lemma}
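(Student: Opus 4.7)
The plan is to verify directly that $\AA \cup \BB$ is pairwise disjoint and that any two distinct members are exchangeable. Pairwise disjointness breaks into three cases: two tuples both in $\AA$ or both in $\BB$ are handled because $\AA$ and $\BB$ are themselves $k$-cliques, while the remaining case of one tuple in $\AA \bs \BB$ and the other in $\BB \bs \AA$ is exactly the hypothesis $\bigcup(\AA \bs \BB) \cap \bigcup(\BB \bs \AA) = \emptyset$. So this half of the conclusion is essentially immediate.

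The substantive step is exchangeability for a pair $\abar \in \AA \bs \BB$ and $\bbar \in \BB \bs \AA$ (the remaining pairs lie entirely inside $\AA$ or entirely inside $\BB$). Here I plan to exploit the characterization in the Remark after the definition of $k$-clique: for disjoint $\abar, \bbar \in M^{(k)}$, the condition $\abar \sim \bbar$ is equivalent to the coordinatewise swap of $\abar$ and $\bbar$, fixing everything else, being an automorphism of $M$. Fix any $\cbar \in \AA \cap \BB$, which exists by hypothesis. Since $\cbar \neq \abar$ are both in the $k$-clique $\AA$, the swap $\sigma_1$ of $\abar$ and $\cbar$ (identity elsewhere) lies in $Aut(M)$; similarly, from $\cbar \neq \bbar$ both being in $\BB$, the swap $\sigma_2$ of $\cbar$ and $\bbar$ lies in $Aut(M)$.

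The remaining work is a short computation: the composition $\sigma_1 \circ \sigma_2 \circ \sigma_1$ sends $a_i \mapsto b_i$, $b_i \mapsto a_i$, fixes each $c_i$, and is the identity on all other points of $M$, so it agrees with the coordinatewise swap of $\abar$ and $\bbar$. This tracking of orbits uses only that $\abar, \bbar, \cbar$ are pairwise disjoint, which was established in the first paragraph. Since a composition of automorphisms is an automorphism, the swap of $\abar$ and $\bbar$ is in $Aut(M)$, so $\abar \sim \bbar$ by the Remark, completing the proof. I do not anticipate any real obstacle here: the whole argument amounts to a three-fold conjugation inside $Aut(M)$, and the only delicate point is verifying the disjointness of the three tuples before composing the swaps.
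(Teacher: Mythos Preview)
Your proposal is correct and follows essentially the same approach as the paper: both arguments first dispose of pairwise disjointness by cases, then fix $\cbar\in\AA\cap\BB$ and obtain $\abar\sim\bbar$ via three transpositions through $\cbar$. The only cosmetic difference is that the paper phrases the chain in terms of types over $Y=M\setminus(\abar\cup\bbar\cup\cbar)$ (using the sequence $\tp(\abar\bbar\cbar/Y)=\tp(\abar\cbar\bbar/Y)=\tp(\cbar\abar\bbar/Y)=\tp(\bbar\abar\cbar/Y)$, i.e.\ the conjugation $\sigma_2\sigma_1\sigma_2$) whereas you phrase it via the automorphism characterization from the Remark and compute $\sigma_1\sigma_2\sigma_1$; both yield the swap of $\abar$ and $\bbar$.
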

\begin{proof}
First, we show distinct tuples $\abar, \bbar \in \AA \cup \BB$ are disjoint. If $\abar, \bbar \in \AA$ (or $\abar, \bbar \in \BB$, this follows from the definition of $k$-cliques. Otherwise $\abar \in (\AA \bs \BB)$ and $\bbar \in (\BB \bs \AA)$, and so are disjoint by the last assumption.

 Let $\abar \in (\AA\setminus\BB)$, $\bbar \in (\BB\setminus\AA)$, and choose $\cbar \in \AA \cap \BB$. Let $Y = M \bs (\abar\cup\bbar\cup\cbar)$. By a sequence of transpositions, each involving $\cbar$, we have
\[\tp(\abar\bbar\cbar/Y) = \tp(\abar\cbar\bbar/Y) = \tp(\cbar\abar\bbar/Y) = \tp(\bbar\abar\cbar/Y)\]
Thus $\tp(\abar\bbar/Y\cbar) = \tp(\bbar\abar/Y\cbar)$, and so $\abar \sim \bbar$, as desired.
\end{proof}

Infinite $k$-cliques $\A$ in $M$ give rise to types that support infinite arrays.  
\begin{definition}Let $\AA$ be an infinite $k$-clique and let $\xbar=(x_1,\dots,x_k)$. The {\em average type of $\AA$}, written $Av_{\AA}(\xbar)$, is the set
\[\set{\phi(\xbar,\ebar): \phi \text{ is q.f., } \ebar\in M^{<\omega}, \hbox{$M\satisfies \phi(\abar,\ebar)$ for some/all $\abar\in\AA$ with $\abar\cap\ebar=\emptyset$}}\]
\end{definition}
\begin{lemma}  \label{average}
If $\AA$ is an infinite $k$-clique in $M$, then $Av_\AA(\xbar)$ is well-defined and  $Av_{\A}(\xbar)\in\Arr_k(M)$.
\end{lemma}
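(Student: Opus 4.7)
The plan is to verify three things in order: (i) the set $Av_\AA(\xbar)$ is well-defined, (ii) it is a complete consistent quantifier-free type over $M$ (hence realized in some elementary extension), and (iii) it is coordinate-wise non-algebraic, so that Lemma \ref{char} yields $Av_\AA(\xbar)\in\Arr_k(M)$.

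For well-definedness, I must show that if $\abar,\abar'\in\AA$ are both disjoint from $\ebar$, then $M\satisfies\phi(\abar,\ebar)\leftrightarrow\phi(\abar',\ebar)$ for every quantifier-free $\phi$. The key tool is Proposition \ref{prop:ti equiv}, which upgrades the pairwise exchangeability in the definition of a $k$-clique to total indiscernibility of $\AA$ over $M\setminus\bigcup\AA$. Split $\ebar$ as $\ebar_0\ebar_1$ with $\ebar_0\subseteq M\setminus\bigcup\AA$ and $\ebar_1\subseteq\bigcup\AA$. Since $\abar,\abar'$ are disjoint from $\ebar_1$, there exist finitely many $\bbar_1,\dots,\bbar_m\in\AA\setminus\set{\abar,\abar'}$ whose union contains $\ebar_1$, and $\phi(\xbar,\ebar)$ can be rewritten as some $\psi(\xbar,\bbar_1,\dots,\bbar_m,\ebar_0)$ by designating which coordinates of the $\bbar_i$ list $\ebar_1$. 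Then total indiscernibility applied to $\set{\abar,\abar',\bbar_1,\dots,\bbar_m}$ over $\ebar_0$ gives the equivalence.

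For (ii), completeness is automatic: for any quantifier-free $\phi(\xbar,\ebar)$ with $\ebar\in M^{<\omega}$, infinitely many $\abar\in\AA$ are disjoint from $\ebar$ (since distinct tuples in $\AA$ are pairwise disjoint and $\ebar$ is finite), so either $M\satisfies\phi(\abar,\ebar)$ or $M\satisfies\neg\phi(\abar,\ebar)$. Consistency with the elementary diagram of $M$ follows by compactness: given finitely many $\phi_i(\xbar,\ebar_i)\in Av_\AA$, choose any $\abar\in\AA$ disjoint from $\bigcup_i\ebar_i$; by well-definedness, $\abar$ realizes all of the $\phi_i(\xbar,\ebar_i)$ in $M$ simultaneously. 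For (iii), given any $b\in M$ and coordinate $x_i$, choose $\abar\in\AA$ with $b\notin\abar$ (possible because pairwise disjointness allows at most one tuple of $\AA$ to contain $b$); then $M\satisfies a_i\neq b$, so $(x_i\neq b)\in Av_\AA$. By Lemma \ref{char}, $Av_\AA(\xbar)\in\Arr_k(M)$.

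The only step with any real content is well-definedness, and there the subtlety is that $\ebar$ may intersect $\bigcup\AA$; handling this is exactly what Proposition \ref{prop:ti equiv} was set up for. The remaining steps are routine consequences of the infinitude of $\AA$ together with pairwise disjointness of its tuples.
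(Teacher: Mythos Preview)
Your proof is correct and follows the same outline as the paper's. One remark: for well-definedness you route through Proposition~\ref{prop:ti equiv}, but this is unnecessary---pairwise exchangeability of $\abar,\abar'$ already gives $\tp(\abar/M\setminus(\abar\cup\abar'))=\tp(\abar'/M\setminus(\abar\cup\abar'))$, and since $\ebar$ disjoint from both $\abar$ and $\abar'$ lies in $M\setminus(\abar\cup\abar')$ regardless of whether it meets $\bigcup\AA$, there is no subtlety to handle. The paper also concludes $Av_\AA\in\Arr_k(M)$ by observing that every finite subset is realized by cofinitely many tuples of $\AA$ (and then compactness), whereas you invoke Lemma~\ref{char}; both are fine.
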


\begin{proof} 
For well-definedness, we must check the ``some/all'' claim implicit in the definition. As $\AA$ is an infinite $k$-clique, $\abar,\abar'\in\AA$ are exchangeable, hence $\tp(\abar/\ebar)=\tp(\abar'/\ebar)$ whenever $\abar \cap \ebar = \emptyset$. It is easily verified that it is a complete (quantifier-free) type over $M$.  As any finite subset of $Av_{\AA}(\xbar)$
is realized by infinitely many $\abar\in\AA$,  we see that
$Av_{\AA}(\xbar)\in \Arr_k(M)$.
\end{proof}

\medskip

\centerline{{\bf For the remainder of this section, fix an integer $k\ge 1$.}}

\medskip

\begin{definition}  Let $M$ be any $\LL$-structure.
\begin{itemize}
\item  For any  $k'\le k$, call a $k'$-clique $\AA$ in $M$ {\em sufficiently large} if $|\AA|>2k+r$.
\item
 An extension $N\supseteq M$ is {\em $(\le k)$-clique-preserving} if, for every $k'\le k$, 
 every sufficiently large $k'$-clique $\A$ in $M$  remains a $k'$-clique in $N$.
\end{itemize}
\end{definition}

We will see two ways of obtaining $(\le k)$-clique-preserving extensions of $M$.  The first follows from the definability of exchangeability.

\begin{remark} \label{rem:pres}
If $M^*\succeq M$, then since exchangeability is definable, $M^*$ will be both age-preserving and $(\le k)$-clique-preserving.  
Moreover, any substructure $N$ satisfying $M\subseteq N\subseteq M^*$
will also be an age-preserving, $(\le k)$-clique preserving extension of $M$.    
\end{remark}

The second method involves extending existing, sufficiently large cliques.

\begin{definition}  Fix an $\LL$-structure $M$ and recall $k$ is fixed throughout.
\begin{enumerate}
\item  A {\em simple clique extension of $M$} is an extension $N$ with universe $M\cup\bigcup\CC$, where for some $k'\le k$, $\CC$ is a $k'$-clique in $N$ extending some
sufficiently large $k'$-clique $\AA$ in $M$.
\item  A {\em clique extension of $M$} is  a continuous, nested union $\bigcup N_\alpha$ of simple clique extensions, i.e., $N_0=M$,
$N_{\alpha+1}$ is a simple clique extension of $M$, and $N_\lambda=\bigcup_{\alpha<\lambda} N_\alpha$ for limit $\lambda$.
\end{enumerate}
\end{definition}

\begin{lemma}  \label{safeextend}  Every clique extension $N\supseteq M$ is $(\le k)$-clique preserving.
\end{lemma}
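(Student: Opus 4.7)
I would argue by induction on the stages $\set{N_\alpha}$ of the clique extension $N = \bigcup_\alpha N_\alpha$, with the limit step immediate because being a $k'$-clique is witnessed by equalities of finite types. Thus it suffices to treat a single simple clique extension $N = M \cup \bigcup \CC$, where $\CC \supseteq \AA$ is a $k^\dagger$-clique in $N$ (for some $k^\dagger \le k$) extending a sufficiently large $k^\dagger$-clique $\AA$ in $M$, and to show that every sufficiently large $k'$-clique $\AA'$ in $M$ (with $k' \le k$) remains a $k'$-clique in $N$. By Proposition~\ref{prop:ti equiv} this reduces to verifying that any pair $\bbar \ne \bbar'$ in $\AA'$ is exchangeable in $N$; by the Remark on the finite definability of exchangeability, it is enough to check
\[N \satisfies \phi(\bbar,\bbar',\ybar) \leftrightarrow \phi(\bbar',\bbar,\ybar)\]
for every $\phi(\xbar_1,\xbar_2,\ybar) \in \SS_{2k+r}$ with $\lg(\ybar)=r$ and every $\ybar \in (N\setminus(\bbar\cup\bbar'))^r$.

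The strategy is to reduce to the already-known exchangeability of $\bbar,\bbar'$ inside $M$ by swapping the ``new'' entries of $\ybar$ (those in $\bigcup\CC\setminus\bigcup\AA$) with entries from $\bigcup\AA$, using that $\CC$ is totally indiscernible over $N\setminus\bigcup\CC = M\setminus\bigcup\AA$ by Proposition~\ref{prop:ti equiv}. Let $\cbar_1,\dots,\cbar_s$ be the distinct tuples of $\CC\setminus\AA$ meeting $\ybar$, and $\tilde\cbar_1,\dots,\tilde\cbar_t$ the tuples of $\AA$ meeting $\bbar\cup\bbar'\cup\ybar$. Since each element of $\bbar\cup\bbar'\cup\ybar$ lies in at most one tuple of $\CC$, and $|\bbar\cup\bbar'|\le 2k$, $|\ybar|\le r$, one gets $s+t \le 2k+r$; the hypothesis $|\AA| > 2k+r$ thus supplies distinct $\abar_1,\dots,\abar_s \in \AA$ disjoint from every $\tilde\cbar_j$.

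Now apply the total indiscernibility of $\CC$ to the finite sequence $(\cbar_1,\dots,\cbar_s,\abar_1,\dots,\abar_s,\tilde\cbar_1,\dots,\tilde\cbar_t)$ of distinct $\CC$-tuples, under the permutation that swaps $\cbar_i \leftrightarrow \abar_i$ and fixes each $\tilde\cbar_j$, with the remaining parameters of $\phi$ (the entries of $\bbar\cup\bbar'\cup\ybar$ lying outside $\bigcup\CC$) taken as external context from $N\setminus\bigcup\CC$. This yields
\[N \satisfies \phi(\bbar,\bbar',\ybar) \leftrightarrow \phi(\bbar,\bbar',\ybar'),\]
where $\ybar'$ differs from $\ybar$ only in entries originally drawn from some $\cbar_i$, each such entry being replaced by the corresponding entry of $\abar_i$; entries of $\bbar,\bbar'$ and entries of $\ybar$ in the $\tilde\cbar_j$'s are left fixed. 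Since $\ybar' \subseteq M$, exchangeability of $\AA'$ within $M$ gives $\phi(\bbar,\bbar',\ybar') \leftrightarrow \phi(\bbar',\bbar,\ybar')$, and a second application of the same swap returns the $\ybar'$ on the right to $\ybar$, yielding the required equivalence.

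The main obstacle is the bookkeeping around the swap: total indiscernibility of $\CC$ is only available over its complement $N\setminus\bigcup\CC$, so every parameter of $\phi$ lying in $\bigcup\CC$ --- including entries of $\bbar\cup\bbar'$ inside $\bigcup\AA$ and entries of $\ybar$ in $\AA$-tuples --- must be incorporated into the permutation rather than left as external context. This is the role of the ``frozen'' tuples $\tilde\cbar_1,\dots,\tilde\cbar_t$, and it forces the chosen $\abar_i$'s to avoid them. The counting $s+t \le 2k+r$ is exactly matched by the sufficient-largeness hypothesis $|\AA| > 2k+r$, making the selection of $\abar_i$'s always possible.
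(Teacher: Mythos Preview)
Your proposal is correct and follows essentially the same approach as the paper: reduce by induction along the chain to a simple clique extension, then verify exchangeability of $\bbar,\bbar'$ over any $\ybar$ in $N$ by swapping the ``new'' entries of $\ybar$ for entries from $\AA$, reducing to exchangeability inside $M$. The only organizational difference is that the paper performs one further reduction, by induction on $|\CC\setminus\AA|$, to the case where a single new tuple $\cbar$ is added; this lets them use just the single exchangeability $\cbar\sim\abar$ (for one $\abar\in\AA$ disjoint from $\bbar\bbar'\ybar$) rather than the full total indiscernibility of $\CC$, but the content is the same.
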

\begin{proof} Arguing by induction on the length of the chain, it suffices to show this when $N$ is a simple clique extension of $M$.
Similarly, 
arguing by induction on $|\CC\setminus\AA|$, it suffices to show this when $\CC=\AA\cup\set{\cbar}$ and $N=M\cup\set{\cbar}$.
So choose any $k'\le k$ and any $k'$-clique $\BB$ in $M$.  To see that $\BB$ remains a $k'$-clique in $N$,
choose $\bbar,\bbar'\in\BB$ and $\hbar\in (N\setminus(\bbar\cup\bbar'))^r$.  It suffices to show that $N\models\phi(\bbar,\bbar',\hbar)\leftrightarrow\phi(\bbar',\bbar,\hbar)$
for every $\phi\in \SS_{2k'+r}$.   Write $\hbar=\cbar'\ebar$, where $\cbar'=\hbar\cap\cbar$ and $\ebar=\hbar\setminus\cbar$ (so $\ebar\subseteq M$).
As $\AA$ is sufficiently large,  choose $\abar\in\AA$ disjoint from $\bbar\bbar'\hbar$ and let $\abar'\subseteq\abar$ be the subsequence corresponding to $\cbar'$ in $\cbar$.
As $\ebar\abar'$ are from $M$, $\bbar\sim \bbar'$ in $M$, and as $\phi$ is quantifier-free, we have
\[N\models\phi(\bbar,\bbar',\ebar,\abar')\leftrightarrow \phi(\bbar',\bbar,\ebar,\abar')\]
Since $\cbar\sim\abar$ in $N_0$ and $\bbar\bbar'\ebar$ is disjoint from $\cbar\abar$,  we conclude $N\models\phi(\bbar,\bbar',\ebar,\cbar')\leftrightarrow \phi(\bbar',\bbar,\ebar,\cbar')$,
as required.
\end{proof}

Consider the case of an equivalence relation with infinitely many infinite classes from \S \ref{sub:sketch}. This was easier than the general non-mutually algebraic case. For an example closer to the general case, consider when $M$ is an equivalence relation with infinitely many infinite classes, as well as infinitely many classes of each finite size. If we proceed as in \S \ref{sub:sketch}, each $M_f$ will be isomorphic to $M$. In this case, the problem is easily remedied by first passing to an age-preserving
$M' \supset M$ in which every class is infinite. In the general case, this may not be possible, but we may find some age-preserving $M' \supset M$ in which every (sufficiently large) maximal finite $k$-clique cannot be extended further. This is the notion of fullness discussed next. Carrying out the construction from \S \ref{sub:sketch} over this $M'$, we will be able to differentiate the maximal finite $k$-cliques that come from shrinking some infinite $\AA_q$ from $M^*$ with those that were already in $M'$, since only the former will be infinitely extendable.

It is easily seen by Zorn's Lemma that inside every $M$, every $k'$-clique $\AA$ in $M$ is contained in a maximal $k'$-clique $\BB\supseteq\AA$ in $M$.
What is less clear is whether a maximal $k'$-clique $\AA$ can be extended in some age-preserving extension $N\supseteq M$. 

\begin{definition}  Fix an $\LL$-structure $M$.
\begin{enumerate}
\item   For $k'\le k$, call a $k'$-clique $\AA$ in $M$ {\em infinitely extendable} if there is some
age-preserving $N\supseteq M$ and an infinite $k'$-clique $\CC\supseteq \AA$ in $N$; and call 
$\AA$ {\em unextendable} if it is maximal in every age-preserving $N\supseteq M$.
\item  
$M$  is {\em $k$-full} if, for every $k'\le k$, 
 every sufficiently large,  maximal $k'$-clique $\AA$ in $M$,  $\AA$ is either infinite or unextendable.
\end{enumerate}
\end{definition}

Clearly, if a $k'$-clique $\AA$ is not infinitely extendable, then there is an age-preserving $N\supseteq M$ and an unextendable (finite)  $k'$-clique $\CC$ in $N$ extending $\AA$.
In fact, we can additionally require that the age-preserving extension be $(\le k)$-clique preserving as well.

\begin{lemma} \label{stepone}  Suppose $M$ is a countable
$\LL$-structure, and for some $k'\le k$,  $\AA$ is a sufficiently large $k'$-clique in $M$.  Then there is an age-preserving, $(\le k)$-clique-preserving
countable $N\supseteq M$ and an extension $\CC\supseteq\AA$ such that:
\begin{enumerate}  
\item  If $\AA$ is infinitely extendable, then $\CC$ is infinite; and
\item  If $\AA$ is not infinitely extendable, then  $\CC$ is  unextendable.
\end{enumerate}
\end{lemma}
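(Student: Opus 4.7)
The plan is to split on whether $\AA$ is infinitely extendable and in each case build $N$ as a (possibly trivial or length-one) clique extension of $M$, so that Lemma \ref{safeextend} yields $(\le k)$-clique preservation for free, while age-preservation is arranged by placing the new material inside an age-preserving witness. In Case 1 ($\AA$ infinitely extendable), if $\AA$ is already infinite take $N=M$ and $\CC=\AA$; otherwise use a witnessing age-preserving $N^{*}\supseteq M$ containing an infinite $k'$-clique $\CC^{*}\supseteq\AA$, extract any countably infinite subclique $\CC\supseteq\AA$ of $\CC^{*}$, and let $N\subseteq N^{*}$ be the substructure on $M\cup\bigcup\CC$. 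Since exchangeability is a property of quantifier-free formulas and passes to substructures, $\CC$ is still a $k'$-clique in $N$, so $N$ is a simple clique extension of $M$; Lemma \ref{safeextend} gives $(\le k)$-clique preservation, and age-preservation is inherited from $N^{*}$.

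Case 2 ($\AA$ not infinitely extendable) is handled by iteration. Build nested chains $M=N_0\subseteq N_1\subseteq\dots$ and $\AA=\CC_0\subseteq\CC_1\subseteq\dots$ as follows: at stage $i$, halt if $\CC_i$ is unextendable in $N_i$ and output $N=N_i$, $\CC=\CC_i$; otherwise pick an age-preserving $N_i'\supseteq N_i$ and a tuple $\cbar\in N_i'$ with $\CC_i\cup\set{\cbar}$ a $k'$-clique in $N_i'$, and let $N_{i+1}$ be the substructure of $N_i'$ on $N_i\cup\cbar$, with $\CC_{i+1}=\CC_i\cup\set{\cbar}$. Since $\CC_i\supseteq\AA$ is sufficiently large, $N_{i+1}$ is a simple clique extension of $N_i$; it is age-preserving over $N_i$ (as a substructure of $N_i'$) and $(\le k)$-clique-preserving over $N_i$ (by Lemma \ref{safeextend}), so transitivity makes each $N_i$ carry both properties over $M$.

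The main obstacle is showing the iteration terminates at some finite $i$; granting this, the halting condition directly supplies conclusion (2). Suppose instead the process runs forever. Then $N_\infty:=\bigcup_i N_i$ is a continuous nested union of simple clique extensions, hence a clique extension of $M$, and so countable, age-preserving over $M$, and $(\le k)$-clique-preserving over $M$ by Lemma \ref{safeextend}. I then verify that $\CC_\infty:=\bigcup_i \CC_i$ is an infinite $k'$-clique in $N_\infty$: any pair $\abar,\bbar\in\CC_\infty$ lies in a common $\CC_j$, which is sufficiently large since it contains $\AA$, so Lemma \ref{safeextend} applied to the sub-chain $N_j\subseteq N_\infty$ (itself a clique extension of $N_j$) shows that $\CC_j$ remains a $k'$-clique in $N_\infty$, whence $\abar\sim\bbar$ there. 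This yields an infinite $k'$-clique extension of $\AA$ inside a countable age-preserving extension of $M$, contradicting the hypothesis of Case 2 and forcing the iteration to halt.
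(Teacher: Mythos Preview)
Your proof is correct and follows the same core idea as the paper: build $N$ as a clique extension of $M$ so that Lemma~\ref{safeextend} yields $(\le k)$-clique preservation, and inherit age-preservation by sitting inside an age-preserving witness. Case~1 matches the paper almost verbatim.

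In Case~2 the paper takes a shortcut you did not: rather than iterating one tuple at a time and proving termination by contradiction, it simply chooses at the outset an age-preserving $N^*\supseteq M$ and a $k'$-clique $\CC\supseteq\AA$ in $N^*$ of \emph{largest possible finite size} (which exists precisely because $\AA$ is not infinitely extendable), then restricts to $N=M\cup\bigcup\CC$. Unextendability of $\CC$ in $N$ follows immediately from maximality of $|\CC|$, since any age-preserving extension of $N$ is also age-preserving over $M$. Your iterative argument reaches the same conclusion and is entirely sound; the paper's version just compresses the whole chain into a single maximality step.
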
  

\begin{proof}   In both cases, choose an age-preserving $N^*\supseteq M$ and a $k'$-clique $\CC$ in $N^*$ extending $\AA$ that is either infinite, or of largest possible finite size.
In either case, let $N$ be the substructure of $N^*$ with universe $M\cup\bigcup\CC$.  Then $N$ is also an age-preserving extension of $M$, and moreover $N$ is a clique extension.
Thus, $N$ is a $(\le k)$-clique preserving extension of $M$ by Lemma \ref{safeextend}.
\end{proof}

The following lemma now follows by bookkeeping.

\begin{lemma} \label{full}  Every countable structure $M$ has a countable,  $k$-full, $(\le k)$-clique-preserving, age-preserving  extension $N\supseteq M$.
\end{lemma}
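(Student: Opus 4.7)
The plan is to construct $N$ as the union of an increasing $\omega$-chain $M = N_0 \subseteq N_1 \subseteq \dots$ of countable, age-preserving, $(\le k)$-clique-preserving extensions, where each successor step applies Lemma~\ref{stepone} to resolve one sufficiently large $k'$-clique, driven by a bookkeeping that eventually touches every such clique appearing along the chain.

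I would set $N_0 := M$ and proceed inductively. At stage $t$, using a standard dovetailing I would select a pair $(k'_t, \AA_t)$ with $k'_t \le k$ and $\AA_t$ a sufficiently large finite $k'_t$-clique in $N_t$, arranged so that every such pair appearing in any $N_s$ (for $s \le t$) is eventually selected at some stage. Since $\AA_t$ is still a $k'_t$-clique in $N_t$ by the $(\le k)$-clique-preservation of the chain built so far, Zorn yields a maximal (still sufficiently large) $k'_t$-clique $\BB_t \supseteq \AA_t$ in $N_t$, to which I would apply Lemma~\ref{stepone} to produce $N_{t+1} \supseteq N_t$ and a $k'_t$-clique $\CC_t \supseteq \BB_t$ in $N_{t+1}$ that is either infinite or unextendable over $N_{t+1}$. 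Lemma~\ref{stepone} guarantees $N_{t+1}$ is countable, age-preserving, and $(\le k)$-clique-preserving over $N_t$, and these three properties pass to $N := \bigcup_t N_t$ because any finite witness to their failure already appears at some finite stage.

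To verify $k$-fullness, I would fix a sufficiently large maximal $k'$-clique $\CC$ in $N$ with $k' \le k$; the infinite case is trivial, so assume $\CC$ is finite. Then $\CC \subseteq N_s$ for some $s$, and since exchangeability in $N$ is defined by a universal clause over $N \setminus (\abar \cup \bbar)$, it restricts directly to exchangeability in $N_s$, so $\CC$ is a sufficiently large $k'$-clique in $N_s$. By bookkeeping, $(k', \CC)$ is handled at some stage $t \ge s$, yielding $\CC_t \supseteq \CC$ in $N_{t+1}$. If $\CC_t$ were infinite it would remain a $k'$-clique in $N$ by clique-preservation, contradicting maximality of $\CC$ in $N$; hence $\CC_t$ is unextendable over $N_{t+1}$. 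Since $N$ is an age-preserving extension of $N_{t+1}$, $\CC_t$ is maximal in $N$, and maximality of $\CC$ together with $\CC \subseteq \CC_t$ forces $\CC = \CC_t$, which is therefore unextendable over $N$.

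The main obstacle is purely bookkeeping hygiene. I need the enumeration to be arranged so that every finite sufficiently large maximal $k'$-clique of the limit $N$ was actually captured as some $\AA_t$ (rather than being a clique that only materializes in the limit), and I need ``unextendable over $N_{t+1}$'' to upgrade to ``unextendable over $N$,'' which relies on the age-preserving property composing all the way through the union $N \supseteq N_{t+1}$. Both points are routine but need to be stated carefully.
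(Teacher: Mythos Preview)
Your proposal is correct and follows essentially the same approach as the paper: both build $N$ as a countable union of extensions obtained by iterating Lemma~\ref{stepone}, with the paper organizing the bookkeeping as $\omega$ rounds of $\omega$-many applications while you interleave everything into a single $\omega$-chain via dovetailing. Your verification of $k$-fullness (in particular, the observation that unextendability over $N_{t+1}$ upgrades to unextendability over $N$ because age-preservation composes through the union) is actually more explicit than the paper's, which simply asserts that the limit ``is as desired.''
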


\begin{proof}  We first claim that given any countable $M$, there is a countable, age-preserving, $(\le k)$-clique-preserving $M'\supseteq M$ such that for each $1\le k'\le k$,
each of the (countably many) sufficiently large, finite $k'$-cliques $\AA$ in $M$ has an extension $\CC\supseteq M'$ that is either infinite or is unextendable. ($M'$ is obtained as union of a countable chain of age-preserving, $(\le k)$-clique-preserving extensions formed by iterating Lemma \ref{stepone} once for each such $\AA$.)

Now, simply iterate the claim above $\omega$ times, getting a nested sequence $M=M_0\subseteq M_1\subseteq M_2\subseteq \dots$ with $M_{n+1}=(M_n)'$ from above.
Then $N=\bigcup M_n$ is as desired.
\end{proof}

\section{Grid extensions} \label{sec:full}

We now generalize the construction of adding infinitely many new equivalence classes from Case 2 of \S \ref{sub:sketch}.
Throughout this section, we will work with in a finite, relational  language $\LL$ with arity bounded by $r$ and we will be considering
non-mutually algebraic models that are array-minimal of index $k$ (recall Definition \ref{arraymin}). {\bf These $k$ are $r$ are fixed throughout this section.}
Thus, e.g., a $k'$-clique $\AA$ will be sufficiently large if $|\AA|>2k+r$. 

\begin{lemma} \label{step}
 Suppose $M$ is not mutually algebraic, $M$ is array-minimal of index $k$, there is no age-preserving $N\supseteq M$ with $2^{\aleph_0}$ siblings,  and let $p\in \Arr_k(M)$. 
Then there is an age-preserving, clique preserving $N\supseteq M$ containing an infinite $k$-clique $\A=\set{\abar_\ell:\ell\in\omega}$ with each $\abar_\ell$ realizing $p$.
\end{lemma}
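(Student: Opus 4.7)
The plan is to find the desired $k$-clique by Ramsey-extracting an order-indiscernible array of realizations of $p$ inside an elementary extension $M^* \succeq M$, then case-splitting on whether the extracted array is totally or strictly order indiscernible over $M$. The strictly order-indiscernible case will be ruled out by Proposition \ref{one} together with the hypothesis that no age-preserving extension of $M$ has $2^{\aleph_0}$ siblings, leaving only the totally indiscernible case, where $M^*$ itself already produces the sought-after structure.

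In more detail, since $p\in \Arr_k(M)$, fix an elementary extension $M^*\succeq M$ and an infinite pairwise-disjoint array $\set{\bbar_i:i\in\omega}\subseteq (M^*)^k$ of realizations of $p$. Because $\LL$ is finite relational and $M$ is countable, there are only countably many quantifier-free formulas over $M$, so a routine iterated application of Ramsey's theorem combined with a diagonal argument produces an infinite subsequence, re-indexed as $\set{\abar_\ell:\ell\in\omega}$, that is order indiscernible over $M$. Each $\abar_\ell$ still realizes $p$. If this array is not totally indiscernible over $M$, then it is strictly order indiscernible over $M$, and all hypotheses of Proposition \ref{one} are met by $M$, $k$, and $p$; the conclusion of that proposition directly contradicts our assumption that no age-preserving extension of $M$ has $2^{\aleph_0}$ siblings. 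Hence the extracted array must in fact be totally indiscernible over $M$.

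Let $N$ be the substructure of $M^*$ with universe $M\cup\bigcup_\ell \abar_\ell$. The complement of $\set{\abar_\ell:\ell\in\omega}$ in $N$ is precisely $M$, so total indiscernibility over $M$ coincides with total indiscernibility over the complement in $N$, and Proposition \ref{prop:ti equiv} yields that $\set{\abar_\ell:\ell\in\omega}$ is a $k$-clique in $N$. Since $M\subseteq N\subseteq M^*$ with $M^*\succeq M$, Remark \ref{rem:pres} ensures that $N$ is both age-preserving over $M$ and $(\le k)$-clique-preserving, which is what we need. The one place requiring care is verifying that the Ramsey extraction achieves order-indiscernibility simultaneously over all of $M$; but the countability of $M$ and the finiteness of $\LL$ reduce this to a standard diagonal bookkeeping. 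Array-minimality of index $k$ enters only indirectly, through its use inside Proposition \ref{one}.
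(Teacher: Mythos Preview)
Your proof is correct and follows essentially the same approach as the paper: extract an order-indiscernible array of realizations of $p$ over $M$, invoke Proposition~\ref{one} to rule out strict order-indiscernibility, apply Proposition~\ref{prop:ti equiv} to get the $k$-clique, and cite Remark~\ref{rem:pres} for age- and clique-preservation. The only cosmetic difference is that the paper obtains the indiscernible sequence via Ramsey plus compactness, whereas you extract a subsequence from a fixed array using iterated Ramsey and diagonalization over the countably many quantifier-free formulas with parameters in $M$; both routes are standard and equivalent here.
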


\begin{proof}  As $p\in \Arr_k(M)$, we can use Ramsey's theorem and compactness to find an elementary extension $M^*\succeq M$ containing an order-indiscernible over $M$
sequence $\<\abar_\ell:\ell\in \omega\>$ of realizations of $p$.  This sequence must be totally indiscernible over $M$, as otherwise Proposition \ref{one} would give an age-preserving $N\supseteq M$
with $2^{\aleph_0}$ siblings.  Take $N$ to be the substructure of $M^*$ with universe $M\cup\set{\abar_\ell:\ell\in\omega}$.
As $\A=\set{\abar_\ell:\ell\in\omega}$ is totally indiscernible over its complement, it is a $k$-clique by Proposition \ref{prop:ti equiv}.  The fact that $N$ is age-preserving and clique preserving follows by Remark \ref{rem:pres}.
\end{proof}

\begin{lemma}  \label{omega}  Suppose $M$ is not mutually algebraic, $M$ is array-minimal of index $k$, and there is no age-preserving $N\supseteq M$ with $2^{\aleph_0}$ siblings.
Then there is an $R(\xbar,\ybar) \in \LL$, an infinite set $\set{p_q:q\in \Q}\subseteq \Arr_k(M)$, 
a tuple $\dbar_{q,r}\in M^{\lg(\ybar)}$ for all $q<r \in \Q$, and an age-preserving, clique-preserving $N\supseteq M$
with infinite $k$-cliques $\set{\A_q :q\in \Q}$ from $N$
 such that, letting $\A_q = \set{\abar_{q, i}: i \in \omega}$, the following hold.
 
 \begin{enumerate}
 \item $\bigcup \A_q \cap \bigcup \A_r = \emptyset$ for $q \neq r$.
 \item   For each $q \in \Q$ and $i \in \omega$, $\abar_{q,i}$ is a realization of $p_q$.
 \item   For each $q<r  \in \Q$ and $i\in\omega$, $N\models R(\abar_{q,i},\dbar_{q,r})\wedge\neg R(\abar_{r,i},\dbar_{q,r})$.
 \end{enumerate}
 \end{lemma}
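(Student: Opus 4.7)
The plan is to extract the required family of types from $\Arr_k(M)$ via Ramsey-type arguments, and then to realize them as infinite $k$-cliques in a single extension $N$ by iterating Lemma~\ref{step}. First, $\Arr_k(M)$ is infinite by hypothesis, and because $\LL$ is a finite relational language there are only finitely many atomic ``shapes'' $R(\bar{x}',\bar{y})$ (atomic formulas with $\bar{x}'$ a fixed subsequence of $\bar{x}$ in specified positions). A pigeonhole, coloring each pair of distinct types by an atomic shape on which they differ, yields an infinite subfamily $\set{p_n:n\in\omega}\subseteq \Arr_k(M)$ and a single atomic $R(\bar{x},\bar{y})\in\LL$ such that any two members of the subfamily differ on some instance of $R$.

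Next I color each pair $(p_i,p_j)$ with $i<j$ by the direction of $R$-separation: (A) separators exist in both directions; (B) only the separator with $R\in p_i,\neg R\in p_j$ exists; (C) only the reverse. Ramsey's theorem produces an infinite monochromatic subfamily. In case (A), re-index by any bijection $\phi\colon \Q\to\omega$, set $p_q:=p_{\phi(q)}$, and for each $q<r$ choose $\bar{d}_{q,r}\in M$ witnessing separation in the required direction; bidirectionality makes this succeed regardless of whether $\phi(q)<\phi(r)$ or $\phi(q)>\phi(r)$.

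The main obstacle is ruling out cases (B) and (C): there the $R$-reducts of the $p_n$ are strictly linearly ordered by inclusion along the $\omega$-chain, and since $\Q$ admits no order-preserving (nor reversing) injection into $\omega$, a direct re-indexing cannot succeed. The plan is to exploit the hypothesis of no age-preserving extension with $2^{\aleph_0}$ siblings. In case (B), the separators $\bar{d}_n$ exhibit the order property for $R$: realizing each $p_n$ by $\bar{a}_n$ in some $M^*\succeq M$ yields $M^*\models R(\bar{a}_j,\bar{d}_i)\leftrightarrow j>i$. A Ramsey-plus-compactness extraction should produce an infinite order-indiscernible-over-$M$ sequence of realizations of a single type $p\in\Arr_k(M)$ that still witnesses the order property via parameters in $M$; this sequence is then strictly (not totally) order indiscernible, contradicting Proposition~\ref{one}. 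The delicate point is preserving strictness after projecting to the $k$-tuple component, which may require refining the Ramsey choices so that the separating parameters can be absorbed into the common type, or locating another atomic shape whose reducts are pairwise incomparable on some infinite subset, forcing case (A).

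Having secured the $\Q$-indexed family of types and separators, I construct $N$ by iterating Lemma~\ref{step}. Let $N_0=M$, enumerate $\Q$ as $q_0,q_1,\ldots$, and at step $n$ extend $p_{q_n}$ to a type in $\Arr_k(N_{n-1})$, which is possible since Lemma~\ref{ez} keeps each $N_{n-1}$ array-minimal of index $k$ and Lemma~\ref{char} guarantees that coordinate-wise non-algebraic extensions exist. Applying Lemma~\ref{step} then yields $N_n\supseteq N_{n-1}$ age-preserving and clique-preserving containing an infinite $k$-clique $\AA_{q_n}$ of realizations, chosen disjoint from the previously constructed cliques. The union $N=\bigcup_n N_n$ is an age-preserving, clique-preserving extension of $M$; conditions~(1) and~(2) are immediate, and~(3) follows because $R(\bar{x},\bar{d}_{q,r})\in p_q$ and $\neg R(\bar{x},\bar{d}_{q,r})\in p_r$ force the stated behavior on every tuple $\bar{a}_{q,i}$ and $\bar{a}_{r,i}$.
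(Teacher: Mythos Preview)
Your overall architecture—Ramsey on the separating atomic formula, then iterate Lemma~\ref{step}—matches the paper's proof exactly, and your step~3 (the construction of $N$) is fine. The divergence is your second Ramsey pass on the \emph{direction} of separation and the ensuing case split, which the paper does not perform.

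In your cases (B) and (C), your proposed invocation of Proposition~\ref{one} does not go through. If you extract (via Ramsey and compactness) a sequence that is order-indiscernible over $M$, then by definition every tuple in it realizes the \emph{same} type over $M$; in particular, for any fixed $\dbar\subseteq M$ the truth value of $R(\abar'_n,\dbar)$ is independent of $n$. So the order property you exhibited using parameters $\dbar_i\in M$ is necessarily destroyed by the extraction, and you cannot conclude strict (rather than total) indiscernibility this way. Your alternative suggestion—locating another atomic shape whose reducts are pairwise incomparable—amounts to hoping case~(A) holds for some $R$, and nothing in the hypotheses guarantees this.

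The paper sidesteps all of this. It performs only the first Ramsey pass (to fix a single $R$), obtaining an infinite subset $I$ of the original index set on which every pair is $R$-separated, and then simply re-indexes $I$ by $\Q$ and notes that Clause~(3) follows from Clause~(2). The paper is admittedly casual about the direction of the separator—it asserts without comment that one may take $R(\xbar,\dbar_{i,j})\in p_i\setminus p_j$ for $i<j$—but this is harmless: the downstream use in Proposition~\ref{quote} only needs, via compactness and finite Ramsey, arbitrarily large \emph{finite} configurations, and any infinite $I$ supplies these regardless of its order type or the uniformity of the direction. The moral is that your case analysis is attacking a harder problem than is needed; once you have a single $R$ separating an infinite family, proceed directly to the iteration of Lemma~\ref{step}.
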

 
 \begin{proof}  First fix a sequence $\<p_i:i\in\Q\>$ of distinct complete $k$-types over $M$, each of which support an infinite array.
As the types are distinct, for each $i<j<\omega$ there is an $R_{i,j}(\xbar, \ybar_{i,j}) \in \LL$ and $\dbar_{i,j}$ from $M$ such that $R(\xbar,\dbar_{i,j})$ is in $p_i$ but not in $p_j$.
As $\LL$ is finite, by Ramsey's theorem we can choose a specific $R(\xbar,\ybar)$ and an infinite $I\subseteq \Q$ such that $R_{i,j}=R$ whenever
$i<j$ from $I$.  Because of this, Clause (3) follows immediately from Clause (2).  

We construct $N$ in $\omega$ steps, once for each $i\in I$,
each time
applying Lemma \ref{step} to the type $p_i$.  Because each of the extensions are clique-preserving, the union of this sequence suffices.
\end{proof}

\begin{definition} \label{def:grid} \leavevmode
\begin{itemize}
\item 
Fix $R(\xbar,\ybar) \in \LL$. A {\em $(k, R)$-grid extension over $M$} is an age-preserving $N \contains M$ satisfying the following conditions.
\begin{enumerate}
\item $N = M \cup \set{\abar_{q, i} \in N^k: q \in \Q, i \in \omega} \cup \set {\dbar_{q, r}: q < r \in \Q}$.
\item The $\abar_{q, i}$ are pairwise disjoint and disjoint from $M$.
\item For each $q \in \Q$, $\AA_q = \set{\abar_{q,i}:i \in \omega}$ is a $k$-clique.
\item For all $q<r \in \Q$ and $i\in\omega$,
$N\models R(\abar_{q,i},\dbar_{q,r})\wedge\neg R(\abar_{r,i}, \dbar_{q,r})$.
\end{enumerate}
\item Let $\ebar_{q,r} = \dbar_{q,r} \bs (M \cup \bigcup_{q\in\Q}(\bigcup \AA_q))$. Given any order-automorphism $\sigma\in Aut(\Q,\le)$, let $\sigma^*$ be the bijection on $N$ defined as follows.
\begin{enumerate}
\item  For $q\in\Q$, $\sigma^*(\abar_{q,i})=\abar_{\sigma(q),i}$;
\item  For $q<r$ from $\Q$, $\sigma^*(\ebar_{q,r})=\ebar_{\sigma(q),\sigma(r)}$
\item  $\sigma^*$ fixes $M$ pointwise.
\end{enumerate}

\item  An {\em indiscernible $(k,R)$-grid extension} is a $(k,R)$-grid extension $N\supseteq M$ such that, for every $\sigma\in Aut(\Q,\le)$,
the induced
$\sigma^*$ is an automorphism of $N$.  We call such $\sigma^*$ a {\em standard automorphism of $N$}, and any composition of $\sigma^*$ with an element of $\Pi_{q \in \Q} Sym(\AA_q)$ a {\em permuted standard automorphism of $N$}.
\end{itemize}
\end{definition}

\begin{proposition} \label{quote}  Suppose $M$ is not mutually algebraic, $M$ is array-minimal of index $k$, and there is no age-preserving extension $N\supseteq M$ with $2^{\aleph_0}$ siblings.
Then there is an indiscernible $(k, R)$-grid extension $N \contains M$.
\end{proposition}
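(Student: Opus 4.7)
The plan is to start from Lemma \ref{omega} and use a Ramsey-plus-compactness argument to upgrade the resulting pre-grid structure to an indiscernible $(k,R)$-grid extension, with the separating parameters relocated outside $M$.

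First, I would apply Lemma \ref{omega} to fix an age-preserving, clique-preserving $N_0\supseteq M$ containing pairwise disjoint infinite $k$-cliques $\AA_q=\set{\abar_{q,i}:i\in\omega}$ for $q\in\Q$, together with separating parameters $\dbar_{q,r}\in M$ satisfying $R(\abar_{q,i},\dbar_{q,r})\wedge\neg R(\abar_{r,i},\dbar_{q,r})$. Although $N_0$ already satisfies clauses (1)--(4) of Definition \ref{def:grid}, it is generally not indiscernible: because the $\dbar_{q,r}$ lie in $M$ (which is fixed by any standard automorphism) yet need to be shifted by $\sigma^*$ to $\dbar_{\sigma(q),\sigma(r)}$, the separators must be moved outside $M$.

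To achieve the indiscernibility, I would run a compactness argument in the language $\LL_M$ augmented by fresh tuple-constants $\abar_{q,i}$ (for $q\in\Q,\,i\in\omega$) and $\ebar_{q,r}$ (for $q<r\in\Q$), with a theory $T^*$ axiomatizing: the diagram of $M$; the $k$-clique structure on each $\AA_q$; pairwise disjointness (the $\AA_q$ disjoint from $M$ and from one another, and the $\ebar_{q,r}$ disjoint from $M\cup\bigcup_s\AA_s$); the $R$-separation $R(\abar_{q,i},\ebar_{q,r})\wedge\neg R(\abar_{r,i},\ebar_{q,r})$; and order-indiscernibility over $M$ of the enriched family $\<(\AA_q,\ebar_{q,\cdot}):q\in\Q\>$. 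Any finite sub-theory of $T^*$ mentions only finitely many constants and finitely many formulas, and its satisfiability can be witnessed inside $N_0$ (or an elementary extension thereof) by combining a Ramsey extraction on the $\omega$-indexed sub-family of cliques (to realise the indiscernibility axioms for the formulas in the sub-theory) with compactness to furnish fresh realisations of the separator-types outside $M$.

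The main obstacle is the tension between the order-indiscernibility and the $R$-separation: order-indiscernibility over $M$ forces all the cliques to realise a single common type over $M$, so no element of $M$ can separate two cliques, and hence the $\ebar_{q,r}$ must come from outside $M$. To secure such fresh $\ebar_{q,r}$ I would exploit that the type $\tp(\dbar_{q,r}/\bigcup_s\AA_s)$ in $N_0$ extends to a type with new realisations in an elementary extension (taking Ramsey-homogeneity to make the relevant separator-types uniform across pairs, and using Lemma \ref{char} to detect when the resulting types are coordinate-wise non-algebraic, hence admit fresh realisations). Once $T^*$ is shown consistent, any countable model restricted to the substructure $N$ generated by $M$, the cliques $\AA_q$, and the $\ebar_{q,r}$'s is the desired indiscernible $(k,R)$-grid extension: age-preservation is inherited from $N_0$ (elementary extensions preserve age, and age-preservation passes to substructures containing $M$), while the axiomatic order-indiscernibility ensures each $\sigma\in Aut(\Q,\le)$ lifts to an automorphism $\sigma^*$ of $N$.
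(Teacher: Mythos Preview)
Your overall strategy --- start from Lemma \ref{omega} and run Ramsey-plus-compactness --- is exactly the paper's. The gap is in how you handle the separators.

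You introduce constants $\ebar_{q,r}$ that are required to be disjoint from $M\cup\bigcup_s\AA_s$ \emph{and} to serve as the full parameter tuple in $R(\abar_{q,i},\ebar_{q,r})$. This conflates $\ebar_{q,r}$ with $\dbar_{q,r}$: in Definition \ref{def:grid} the separator is $\dbar_{q,r}$, while $\ebar_{q,r}$ is only its residue outside $M\cup\bigcup_s\AA_s$ and may well be empty (the rank-$0$ case). The equivalence-relation example given immediately after the definition of rank has $\dbar_{q,r}=a_{q,0}\in\AA_q$, so no separator satisfying your disjointness constraint exists at all. Your appeal to Lemma \ref{char} does not repair this: the $\dbar_{q,r}$ produced by Lemma \ref{omega} lie in $M$, hence have algebraic type over $M$, and nothing in your sketch produces a coordinate-wise non-algebraic type over $M\cup\bigcup_s\AA_s$ realizing the required $R$-pattern. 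You are right that in the indiscernible limit no element of $M$ can separate two cliques, but you over-correct by also excluding $\bigcup_s\AA_s$.

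The paper sidesteps this by introducing constants $\dbar_{q,r}$ with no disjointness requirement whatsoever and applying finite Ramsey to the joint configurations $(\abar_{q_1,\cdot},\ldots,\abar_{q_n,\cdot},\dbar_{q_1,q_2},\ldots)$ over the finitely many $M$-constants mentioned in a given finite subtheory. The witnesses from Lemma \ref{omega} (with $\dbar_{q,r}\in M$) then work directly for finite satisfiability; in the compactness limit the $\dbar_{q,r}$ land wherever they land --- possibly inside some $\AA_s$ --- and the indiscernibility scheme applied to equality formulas guarantees that the induced $\sigma^*$ is well-defined on elements and matches the map of Definition \ref{def:grid}.
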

\begin{proof}
We proceed by compactness. Expand the language by constant symbols naming every element of $M$, as well as $k$-tuples of constant symbols $\set{\abar_{q,i} : q \in \Q, i \in \omega}$ and $\ell$-tuples of constant symbols $\set{\dbar_{q, r} : q<r \in \Q}$, where $\ell$ is the length of $\dbar_{q,r}$ in Lemma \ref{omega}. Consider the theory $T^*$ in this language:
\begin{enumerate}
\item The elementary diagram of $M$.
\item The $\abar_{q,i}$ are pairwise disjoint, and no element from $M$ is in any such tuple.
\item For $q<r \in \Q$, $R(\abar_{q, 0}, \abar_{r,0}, \dbar_{q,r}) \wedge \neg R(\abar_{r, 0}, \abar_{q,0}, \dbar_{q,r})$.
\item Each $\AA_q = \set{\abar_{q, i} : i \in \omega}$ is a $k$-clique, and is order indiscernible over all the other constants.
\item For every $\sigma \in Aut(\Q, \leq)$, let $\sigma^*$ be the induced self-bijection of $M \cup \set{\abar_{q, i} : q \in \Q, i \in \omega} \cup \set{\dbar_{q, r}: q<r \in \Q}$. Then $\sigma^*$ is an automorphism.
\end{enumerate}

Models of finite subsets of $T^*$ are obtained by applying the finite Ramsey theorem to the model from Lemma \ref{omega}. Thus, by compactness, we obtain a model $M^* \models T^*$. Taking the restriction of $M^*$ to the constant symbols, and letting $N$ be the reduct to the original language, we are finished.
\end{proof}

\begin{definition}
Let $N \supset M$ be an indiscernible $(k,R)$-grid extension. For $q<r \in \Q$, let $\ebar_{q,r}$ be as in Definition \ref{def:grid}. By indiscernibility, each $\ebar_{i,j}$ must be the same length.

Define the {\em rank} of $N \supseteq M$ to be the length of any $\ebar_{i,j}$. It is possible for the rank to be 0.
\end{definition}

\begin{example}
Let $M$ consist of an equivalence relation with infinitely many infinite classes, and let $N = M \cup \set{a_{q,i}: q \in \Q, i \in \omega}$, where each $\AA_q = \set{a_{q,i}:i \in \omega}$ is a new class. Then we may take $\dbar_{q,r} = a_{q,0}$, giving rank 0.

Our next example codes equivalence relations in a different language. Take $M$ in a language $(U,V,R)$, where $U,V$ are unary and $R$ is binary. Let $U$ and $V$ be infinite and partition $M$, and let $R$ be such that for each $u \in U$ there is a unique $v \in V$ such that $R(u,v)$, and for each $v \in V$ there are infinitely $u \in U$ such that $R(u,v)$. Let $N = M \cup \set{u_{q,i}: q \in \Q, i \in \omega} \cup \set{v_q:q \in \Q}$, where each $u_{q,i} \in U$, $v_q \in V$, and $R(u_{q,i}, v_r)$ holds if $q=r$. Taking $\AA_q = \set{u_{q,i}:i \in \omega}$ and $\dbar_{q,r} = v_q$ gives rank 1. We could not have given this extension rank 0, as $\set{u_{q,i}: q \in \Q, i \in \omega}$ is totally indiscernible over $M$; the $v_q$'s are needed to break them into distinct $k$-cliques.
\end{example}

We now show that in an indiscernible $(k, R)$-grid extension of minimum rank, each $\AA_i$ is a maximal $k$-clique.

\begin{definition}
Let $N \supset M$ be an indiscernible $(k, R)$-grid extension. Two tuples $\abar_1 \subset \abar_{q, i}, \abar_2 \subset \abar_{r, j}$ are {\em associated} if the natural bijection between $\abar_{q,i}$ and $\abar_{r, j}$ maps $\abar_1$ to $\abar_2$.
\end{definition}

The next lemma is analogous to Lemma \ref{stub}. 

\begin{lemma} \label{lemma:stub2}
Suppose $M$ is not mutually algebraic, $M$ is array-minimal of index $k$, and $N \supset M$ is an indiscernible $(k, R)$-grid extension. Suppose $\abar_1 \subsetneq \abar_{q, i}, \abar_2 \subsetneq \abar_{r, j}$ are associated. Then $\tp(\abar_1/(N \bs (\abar_{q,i} \cup \abar_{r,j}))) = \tp(\abar_2/(N \bs (\abar_{q,i} \cup \abar_{r,j})))$.
\end{lemma}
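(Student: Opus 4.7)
The plan is to mimic the proof of Lemma~\ref{stub}, inserting extra bookkeeping to handle the grid structure of $N$. First, dispose of the case $q=r$: by Proposition~\ref{prop:ti equiv}, the position-preserving swap of $\abar_{q,i}$ and $\abar_{q,j}$ within the $k$-clique $\AA_q$ is an automorphism of $N$ fixing $N\setminus(\abar_{q,i}\cup\abar_{q,j})$ pointwise, and it exchanges the associated subsequences $\abar_1$ and $\abar_2$, so their types agree trivially. Thus we may assume $q<r$. Composing analogous $k$-clique swaps within $\AA_q$ and within $\AA_r$ (sending $\abar_{q,i}\mapsto\abar_{q,0}$ and $\abar_{r,j}\mapsto\abar_{r,0}$) further reduces the claim to the case $i=j=0$.

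Now assume for contradiction that $\tp(\abar_1/Y)\neq\tp(\abar_2/Y)$ where $Y=N\setminus(\abar_{q,0}\cup\abar_{r,0})$, and fix a witnessing formula $\phi(\xbar,\ebar)$ with $\ebar\subseteq Y$. Let $S=\set{s\in\Q:\ebar\cap\abar_{s,0}\neq\emptyset}$, a finite subset of $\Q\setminus\set{q,r}$. Choose a dense/codense $D\subseteq\Q$ disjoint from $S\cup\set{q,r}$, and let $N_0\subseteq N$ be the substructure on $N\setminus\set{\abar_{s,0}:s\in D}$; since $N_0\subseteq N$ and $N$ is age-preserving over $M$, so is $N_0$. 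For each $q'\in D$ let $\abar_{q'}^{*}\subsetneq\abar_{q',0}$ be the subsequence associated to $\abar_1$; as $\abar_{q',0}$ is removed from $N_0$, the type $\tp(\abar_{q'}^{*}/N_0)$ is coordinate-wise non-algebraic and hence lies in $\Arr_{k'}(N_0)$ by Lemma~\ref{char}, where $k'=\lg(\abar_1)<k$.

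To contradict the array-minimality of $M$ at index $k$, I would show that these types are pairwise distinct, yielding $|\Arr_{k'}(N_0)|\geq\az$. Given $q'<r'$ in $D$, density of $\Q\setminus D$ permits an order-preserving $\sigma\in Aut(\Q,\leq)$ with $\sigma(q)=q'$, $\sigma(r)=r'$, and $\sigma(S)\subseteq\Q\setminus D$. The induced standard automorphism $\sigma^{*}\in Aut(N)$ sends $\abar_1\mapsto\abar_{q'}^{*}$ and $\abar_2\mapsto\abar_{r'}^{*}$, so $\phi(\xbar,\sigma^{*}(\ebar))$ distinguishes these two tuples. The required verification is that $\sigma^{*}(\ebar)\subseteq N_0$: $\ebar\cap M$ is fixed; $\ebar\cap\abar_{s,0}$ for $s\in S$ maps into $\abar_{\sigma(s),0}\subseteq N_0$ because $\sigma(s)\in\Q\setminus D$; $\ebar\cap\abar_{s,i}$ for $i\geq 1$ maps into $\abar_{\sigma(s),i}\subseteq N_0$ since $N_0$ only omits position-$0$ tuples; and $\ebar\cap\ebar_{s,t}$ maps into $\ebar_{\sigma(s),\sigma(t)}\subseteq N_0$.

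The main design choice, and the obstacle to a naive adaptation of Lemma~\ref{stub}, is the definition of $N_0$. Removing entire cliques $\bigcup\AA_s$ for $s\in D$ would fail whenever $\ebar$ meets some $\abar_{s,i}$ with $i\geq 1$, since $\sigma^{*}(\ebar)$ would then spill outside $N_0$; but retaining too much of $\bigcup\AA_{q'}$ for $q'\in D$ would make $\tp(\abar_{q'}^{*}/N_0)$ algebraic and block Lemma~\ref{char}. Excising only the position-$0$ tuples $\abar_{s,0}$ for $s\in D$ threads the needle: it keeps each $\abar_{q'}^{*}$ disjoint from $N_0$ while reducing the ``transfer'' obstruction on $\sigma$ to a constraint on the finite set $S$, which is handled by density.
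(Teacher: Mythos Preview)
Your argument is correct and closely parallels Lemma~\ref{stub}, but it differs from the paper's proof of Lemma~\ref{lemma:stub2}. The paper does not use a dense/codense set at all: after the same reduction to $q<r$ and $i=j=0$, it defines $N_0 = N\setminus\set{\abar_{\ell,0}:\ell\in\Q}$, removing \emph{all} position-$0$ tuples. The point of this choice is that every standard automorphism fixes this $N_0$ setwise, so from a single witness $\wbar\subseteq N_0$ distinguishing $\abar_1$ and $\abar_2$ one immediately obtains, for every $\sigma$, that $\sigma^*(\wbar)$ distinguishes $\sigma^*(\abar_1)$ and $\sigma^*(\abar_2)$ over $N_0$; no density bookkeeping is needed. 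The paper then handles the possibility that the original witness $\ebar$ meets some $\abar_{s,0}$ by a second step: applying a permutation $\pi\in\prod_s Sym(\AA_s)$ fixing $\abar_{q,0}$ and $\abar_{r,0}$ to push $\ebar$ into $N_0$.

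Your route trades this two-step decomposition for a single step that is more faithful to Lemma~\ref{stub}: you keep the troublesome $\abar_{s,0}$ (for $s\in S$) inside $N_0$ by excising only $D$-indexed position-$0$ tuples, and then compensate by restricting to automorphisms $\sigma$ that send $S$ into $\Q\setminus D$. The cost is the extra density argument; the benefit is that no second ``clique-permutation'' step is required and the analogy with Lemma~\ref{stub} is transparent. Both approaches yield infinitely many coordinate-wise non-algebraic $k'$-types over an age-preserving extension of $M$, contradicting array-minimality.
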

\begin{proof}
We may assume $q \neq r$, since otherwise this follows from $\abar_{q, i} \sim \abar_{q, j}$, and for definiteness take $q < r$. By indiscernibility, it suffices to prove this assuming $i = j = 0$. Let $N_0 = N \bs \set{\abar_{\ell, 0} : \ell \in \Q}$.

\begin{claim*}
$\tp(\abar_1 / N_0) = \tp (\abar_2 / N_0)$.
\end{claim*}
\begin{claimproof}
Each standard automorphism fixes $N_0$ setwise. Suppose $\tp(\abar_1 / N_0) \neq \tp (\abar_2 / N_0)$, as witnessed by $\wbar$. Then for any $\sigma \in Aut(\Q, \leq)$, the standard automorphism $\sigma^*(\wbar)$ witnesses that $\tp(\sigma^*(\abar_1) / N_0) \neq \tp (\sigma^*(\abar_2) / N_0)$. But this contradicts that $M$ is array-minimal of index $k$.
\end{claimproof}

Now suppose $\wbar$ witnesses that $\tp(\abar_1/(N \bs (\abar_{q,0} \cup \abar_{r,0}))) \neq \tp(\abar_2/(N \bs (\abar_{q,0} \cup \abar_{r,0})))$. Let $\pi \in \Pi_i Sym(\AA_i)$ be such that $\pi(\wbar) \in N_0$, and $\pi$ fixes $\abar_{q, 0}$ and $\abar_{r, 0}$. Then $\pi(\wbar)$ witnesses that $\tp(\abar_1 / N_0) \neq \tp (\abar_2 / N_0)$, contradicting the Claim.
\end{proof}

\begin{lemma} \label{lemma:ai max}
Suppose $M$ is not mutually algebraic,  $M$ is array-minimal of index $k$, and $N \supset M$ is an indiscernible $(k, R)$-grid extension of minimum rank. For a given $q \in \Q$ and $\hbar \in N^k$, $\hbar \sim \abar_{q, 0}$ only if $\hbar$ is a permutation of $\abar_{q,i}$ for some $i$.

In particular, for every $q$, $\AA_q = \set{\abar_{q,i} : i \in \omega}$ is a maximal $k$-clique.
\end{lemma}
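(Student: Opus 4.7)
The plan is by contradiction: assume there exists $\hbar\in N^k$ with $\hbar\sim\abar_{q,0}$ that is not a permutation of any $\abar_{q,i}$. Since $\hbar$ has $k$ distinct components and is disjoint from $\abar_{q,0}$, the failure to be such a permutation forces one of two cases: (i) some component of $\hbar$ lies outside $\bigcup_i\abar_{q,i}$---in $M$, in some $\abar_{r,j}$ with $r\neq q$, or in some $\ebar_{r,s}$---or (ii) all components of $\hbar$ lie in $\bigcup_i\abar_{q,i}$, but are drawn from at least two distinct tuples $\abar_{q,i}$.

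The heart of the proof is to use $\hbar$, together with its images $\hbar_{q'}:=\sigma^*(\hbar)$ under the standard automorphisms for $\sigma\in Aut(\Q,\leq)$, to manufacture a new indiscernible $(k,R)$-grid extension $N'\supseteq M$ of strictly smaller rank than $N$, contradicting minimality. The idea is that the hybrid $\hbar$ encodes information that, in $N$, was carried by the external part $\ebar_{q,r}$ of $\dbar_{q,r}$; using $\hbar$ as a replacement for (or modification of) $\abar_{q,0}$ in each fiber lets that information be absorbed into the clique structure of $N'$, so that $\ebar'_{q',r'}$ becomes strictly shorter. In case (i) this rank-reduction is the most direct. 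In case (ii), the absorption instead produces infinitely many coordinate-wise non-algebraic $k'$-types for some $k' < k$ in an age-preserving extension of $M$, contradicting array-minimality of index $k$ via Lemma \ref{char}.

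The main technical obstacle is executing this reduction rigorously. One must verify all four clauses of Definition \ref{def:grid} for $N'$, the indiscernibility of $N'$ under the inherited standard automorphisms, and that $\ebar'_{q',r'}$ really is strictly shorter than $\ebar_{q,r}$. The key tool for controlling types of proper subsequences of the $\abar_{q,i}$ that reappear as pieces of the new cliques is Lemma \ref{lemma:stub2}, and array-minimality of index $k$ rules out anomalous type-splittings that would obstruct the construction.

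Finally, the ``in particular'' clause follows at once. Suppose $\BB\supsetneq\AA_q$ were a $k$-clique, and pick $\hbar\in\BB\bs\AA_q$. Then $\hbar\cap\abar_{q,0}=\emptyset$ and $\hbar\sim\abar_{q,0}$ by the $k$-clique axioms, so by the main statement $\hbar=\pi(\abar_{q,i})$ for some $i\in\omega$ and $\pi\in Sym(k)$. But then $\hbar$ and $\abar_{q,i}$ share their underlying set while being distinct tuples both belonging to $\BB$, contradicting the pairwise-disjointness requirement in the $k$-clique $\BB$.
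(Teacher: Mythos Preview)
Your overall architecture—argue by contradiction, invoke minimality of rank on one side and array-minimality (through Lemma \ref{lemma:stub2}) on the other—is correct, and your derivation of the ``in particular'' clause is fine. The genuine gap is in your case division, which misaligns the hypotheses with the tools.

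The rank-reduction argument only fires under the hypothesis $\hbar\cap E\neq\emptyset$, where $E=\bigcup_{s<t}\ebar_{s,t}$: if $\hbar$ meets some $\ebar_{s,t}$, then $\hbar\sim\abar_{q,0}$ lets you replace $\hbar\cap\ebar_{s,t}$ inside $\dbar_{s,t}$ by the corresponding subtuple of $\abar_{q,0}$ (which lies in $A$, not $E$) while still separating $\abar_{s,\ell}$ from $\abar_{t,\ell}$, strictly shortening $\ebar_{s,t}$. But your case (i) also contains $\hbar$'s lying entirely in $M\cup A$ with a component in $M$ or in some $\bigcup\AA_r$ for $r\neq q$. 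For such $\hbar$ there is no $\ebar$-coordinate to absorb, and nothing in your sketch produces a shorter grid extension.

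The paper therefore splits instead as Case~1: $\hbar\cap E\neq\emptyset$ (rank reduction, as above) versus Case~2: $\hbar\cap E=\emptyset$. Case~2 subsumes your (ii) together with the residual part of your (i), and the contradiction there is \emph{not} obtained by exhibiting infinitely many $k'$-types. Rather, one uses Lemma \ref{lemma:stub2} to show that the type of $\hbar$ over a cofinite substructure $N_0$ is unchanged if one slides its $A$-components to associated subtuples in $\bigcup\AA_s$ for $s<q$; a standard automorphism fixing those slid components while sending $q\mapsto r$ then shows $\hbar\sim_{N_0}\abar_{r,\ell}$ for $\ell$ large and any $r>q$. Composing $\hbar\sim\abar_{q,\ell}$ with $\hbar\sim_{N_0}\abar_{r,\ell}$ yields $\tp(\abar_{q,\ell}\abar_{r,\ell}\wbar)=\tp(\abar_{r,\ell}\abar_{q,\ell}\wbar)$ for the witness $\wbar=\dbar_{q,r}$, contradicting $\abar_{q,\ell}\not\sim\abar_{r,\ell}$. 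Array-minimality enters only through Lemma \ref{lemma:stub2}, not via a direct $k'$-type count.
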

\begin{proof}
Fix $q \in \Q$, and suppose $\hbar \in N^k$ is not a permutation of some $\abar_{q, i}$. Let $N = M\sqcup A \sqcup E$, where $A = \bigcup_i(\bigcup \AA_i)$ and $E = N \bs (M \cup A)$. The proof splits into two cases.

\textbf{Case 1:} $\hbar \cap E \neq \emptyset$. Let $\ebar_{s, t} \subset E$ be such that $\ebar^h_{s,t} = \hbar \cap \ebar_{s,t} \neq \emptyset$, and let $\ebar'_{s,t} = \ebar_{s,t} \bs \ebar^h_{s,t}$. As $\hbar \sim \abar_{q,0}$, let $\abar_{q,0}^h \subset \abar_{q,0}$ correspond to the entries of $\ebar^h_{q,0}$. Let $\dbar_{s,t}$ witness that $\cbar_{s,0} \not\sim \cbar_{t,0}$, with $\ebar_{s,t} \subset \dbar_{s,t}$. Let $\dbar^*_{s,t}$ be obtained by replacing $\ebar_{s,t}$ with  $\cbar^h_{q,0} \ebar'_{s,t}$. Let $\ell$ be large enough that none of the tuples mentioned so far intersect $\abar_{s, \ell}$ or $\abar_{t, \ell}$. We will show $\dbar^*_{s,t}$ still witnesses that $\cbar_{s,\ell} \not\sim \cbar_{t, \ell}$, contradicting the fact that $N$ has minimum rank.

By taking an automorphism replacing $\abar_{q,0}$ with some $\abar_{q,i}$, we may assume $\dbar_{s,t} \cap \abar_{q,0} = \emptyset$. Let $\dbar'_{s,t} = \dbar_{s,t} \bs \ebar_{s,t}$. Since $\hbar \sim \abar_{q,0}$, $\tp(\hbar/\abar_{s, \ell}\abar_{t, \ell} \ebar'_{s,t}\dbar'_{s,t}) = \tp(\abar_{q,0}/\abar_{s, \ell}\abar_{t, \ell} \ebar'_{s,t}\dbar'_{s,t})$. Thus $\tp(\ebar^h_{s,t}/\abar_{s, \ell}\abar_{t, \ell} \ebar'_{s,t}\dbar'_{s,t}) = \tp(\abar^h_{q,0}/\abar_{s, \ell}\abar_{t, \ell} \ebar'_{s,t}\dbar'_{s,t})$, and so $\tp(\dbar_{s,t}/\abar_{s, \ell}\abar_{t, \ell}) = \tp(\dbar^*_{s,t}/\abar_{s, \ell}\abar_{t, \ell})$.

\textbf{Case 2:} $\hbar \cap E = \emptyset$. Given an interval $[x, y)$ in $\omega$, we define $A\upharpoonright_{[x, y)} = \bigcup \set{\abar_{q, i} : q \in \Q, i \in [x, y)}$. Choose $\ell_1$ such that $\hbar \cap A \subset A \upharpoonright_{[0, \ell_1)}$. Fix $r > q$, and let $\wbar$ witness $\abar_{q, 0} \not\sim \abar_{r, 0}$. By permuting each $\AA_i$, we may choose $\ell_2 > \ell_1$ so that $\wbar \subset A \upharpoonright_{[\ell_1, \ell_2)}$. For any $\ell \geq \ell_2$, we have $\wbar$ also witnesses $\abar_{q, \ell} \not\sim \abar_{r, \ell}$. Let $N_0 = N \bs (A \upharpoonright_{[0, \ell_1)})$. We use $\xbar \sim_{N_0} \ybar$ to mean $\xbar$ and $\ybar$ are exchangeable over $N_0$, i.e. for any $\zbar$ from $N_0$, $\tp(\xbar\ybar\zbar) = \tp(\ybar\xbar\zbar)$. 

\begin{claim*}
$\hbar \sim_{N_0} \abar_{r, \ell}$.
\end{claim*}
\begin{claimproof}
As $h \cap E = \emptyset$, let $\hbar \subset \nbar \abar_{t_1, i_1} \dots \abar_{t_j, i_j} = \gbar$, where $\nbar = \hbar \cap M$, each $i < \ell_1$, and $t_1 \leq \dots \leq t_j$. Let $s_1 \leq \dots \leq s_j < q$, let $\gbar_2 = \nbar \abar_{s_1, i_1} \dots \abar_{s_j, i_j}$, and let $\hbar_2 \subset \gbar_2$ be associated with $\hbar$. By Lemma \ref{lemma:stub2}, we have $\tp(\hbar/N_0) = \tp(\hbar_2/N_0)$. In particular, $\tp(\hbar/\cbar_{q, \ell}\cbar_{r, \ell}\dbar) = \tp(\hbar_2/\cbar_{q, \ell}\cbar_{r, \ell}\dbar)$, for all $\dbar \subset N_0$.

Thus we have $\hbar \sim_{N_0} \abar_{q, \ell} \iff \hbar_2 \sim_{N_0} \abar_{q, \ell}$, and similarly for $\abar_{r, \ell}$. By assumption, $\hbar \sim \abar_{q, \ell}$, so we also have $\hbar \sim_{N_0} \abar_{q, \ell}$. Now let $\sigma \in Aut(\Q, \leq)$ be an automorphism with $\sigma(q) = r$ and fixing all $s \leq s_{j}$, and let $\sigma^*$ be the corresponding standard automorphism. This shows $\hbar_2 \sim_{N_0} \abar_{r, \ell}$, and so we also have $\hbar \sim_{N_0} \abar_{r, \ell}$. 
\end{claimproof}

We now handle the fact that $\hbar$ might intersect $\wbar$. As we took $\wbar \in A \upharpoonright_{[\ell_1, \ell_2)}$, and $\hbar \cap E = \emptyset$, we have $\mbar = \hbar \cap \wbar \subset M$. Let $\hbar = \hbar'\mbar$ and $\wbar = \wbar'\mbar$. Then
\[\tp(\abar_{q, \ell}\abar_{r, \ell}\wbar'\hbar) = \tp(\hbar\abar_{r, \ell}\wbar'\abar_{q, \ell}) =  \tp(\abar_{r, \ell}\hbar\wbar'\abar_{q, \ell}) = \tp(\abar_{r, \ell}\abar_{q, \ell}\wbar'\hbar)\]
where we have used $\hbar \sim \abar_{q, \ell}$ in the first and third equalities, and $\hbar \sim_{N_0} \abar_{r, \ell}$ in the second.

Removing $\hbar'$ from the initial and final expressions, and noting $\wbar = \wbar'(\hbar \bs \hbar')$, we contradict that $\wbar$ witnesses $\abar_{q, \ell} \not\sim \abar_{r, \ell}$.
\end{proof}

\begin{definition}
Let $N \supset M$ be an indiscernible $(k, R)$-grid extension. A $k$-clique $\BB=\set{\bbar_s:s\in I} \subset N^k$ is {\em homogeneous} if each $\bbar_s\in\BB$ can be partitioned into $\nbar_s \mbar_s$ (with either part of the partition possibly empty) satisfying the following.
\begin{enumerate}
\item   $\nbar_s$ is from $(N\setminus M)$, $\mbar_s$ is from $M$.
\item  For each $1\le t\le k$, for all $s,s'\in I$, $(\bbar_s)_t\in M$ iff $(\bbar_{s'})_t\in M$.
\item  For all $s,s'\in I$ there is some permuted standard automorphism $\sigma^*$ such that $\sigma^*(\nbar_s)=\nbar_{s'}$.
\end{enumerate}
\end{definition}

\begin{lemma} \label{good} Suppose that $M$ is $k$-full and that $N \supset M$ is an indiscernible $(k, R)$-grid extension. There is a constant $C'$ so that if $\B$ is a maximal $k$-clique in $N$ that has size at least $C'$ and is infinitely extendable,
then $\B$ is already infinite.
\end{lemma}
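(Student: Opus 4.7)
My plan is to combine a Ramsey-type reduction with a case split according to whether the resulting sub-$k$-clique lies in $M^k$.

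First, set $C'$ so that any $k$-clique of size at least $C'$ in $N$ contains a homogeneous sub-$k$-clique of size at least $2k+r+1$. Such $C'$ exists by finite Ramsey: the orbit equivalence on $N^k$ under the action of the permuted standard automorphisms has only finitely many classes, determined by the pattern of $M$- vs $(N \setminus M)$-entries together with the $\Q$-order-type of the indices of the $(N \setminus M)$-entries. Then assume for contradiction that $\BB$ is maximal in $N$, finite, of size at least $C'$, and extends to an infinite $k$-clique $\BB' \subseteq N'^k$ in some age-preserving $N' \supseteq N$. The Ramsey reduction yields a homogeneous sub-$k$-clique $\BB_0 \subseteq \BB$ of size at least $2k+r+1$, with each $\bbar_s = \mbar_s \nbar_s$.

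In Case 1, the $\nbar_s$'s are empty, so $\BB_0 \subseteq M^k$. I would extend $\BB_0$ to a maximal $k$-clique $\CC$ of $M$ by Zorn's Lemma. Since $|\CC| \ge |\BB_0|$ is sufficiently large, $k$-fullness of $M$ gives that $\CC$ is either infinite or unextendable. If $\CC$ is infinite, then since $\BB \setminus \BB_0$ is finite I can choose $\cbar \in \CC \setminus \BB_0$ disjoint from $\bigcup(\BB \setminus \BB_0)$; Lemma~\ref{meld}, applied to $\BB$ and $\BB_0 \cup \set{\cbar}$ (which intersect at $\BB_0$), shows $\BB \cup \set{\cbar}$ is a $k$-clique in $N$, contradicting maximality of $\BB$. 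If instead $\CC$ is unextendable, then since $\BB' \setminus \BB_0$ is infinite and $\CC \setminus \BB_0$ is finite I can choose $\bbar \in \BB' \setminus \BB_0$ disjoint from $\bigcup(\CC \setminus \BB_0)$; Lemma~\ref{meld} applied to $\CC$ and $\BB_0 \cup \set{\bbar}$ then yields $\CC \cup \set{\bbar}$ is a $k$-clique in $N'$, contradicting unextendability of $\CC$ (noting $N' \supseteq M$ is age-preserving).

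Case 2, where the $\nbar_s$'s are nonempty, is the main obstacle. I would first arrange, via Remark~\ref{rem:pres} and passing to a subextension if necessary, that $N$ has minimum rank so that Lemma~\ref{lemma:ai max} applies. If some $\bbar_s$ is a permutation of an $\abar_{q,i}$, then Lemma~\ref{lemma:ai max} forces every tuple of $\BB$ to be a permutation of some $\abar_{q,j}$, i.e.\ $\BB \subseteq \AA_q$; since $\AA_q$ is itself a maximal $k$-clique and is infinite, $\BB = \AA_q$, contradicting finiteness of $\BB$. For the remaining mixed patterns, where $\nbar_s$ involves entries spread across several $\AA_q$'s and/or across the $\ebar_{q,r}$-witnesses, I expect a Lemma~\ref{lemma:stub2}-style indiscernibility argument: the exchangeability of the $\bbar_s$'s combined with the asymmetry from Definition~\ref{def:grid}(4) should constrain the permissible $\Q$-indices so tightly that maximality of $\BB$ forces it to contain the associates of its tuples under all suitable standard automorphisms, yielding infinitely many tuples. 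The technical challenge is uniformly handling the finitely many mixed patterns, each of which has a slightly different configuration of $\AA$-entries and $\ebar$-entries, and verifying that in every such pattern the associates remain pairwise exchangeable in $N$; I anticipate this bookkeeping, rather than any single conceptual step, to be the bulk of the work.
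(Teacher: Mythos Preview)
Your Case 1 is sound and parallels what the paper does with the $\mbar$-part of a homogeneous tuple. Case 2 is where the proposal has a genuine gap.

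First, a technical point: you cannot ``arrange that $N$ has minimum rank'' mid-proof. Lemma~\ref{good} is stated for an arbitrary indiscernible $(k,R)$-grid extension; Remark~\ref{rem:pres} says nothing about rank, and passing to a substructure between $M$ and $N$ does not produce another grid extension of smaller rank. Lemma~\ref{lemma:ai max} is therefore unavailable here (the paper only invokes minimum rank later, in Theorem~\ref{thm:not ma}).

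Second, and more seriously, your strategy for the mixed patterns is aiming in the wrong direction. You hope that indiscernibility plus the asymmetry in Definition~\ref{def:grid}(4) will force $\BB$ to contain all associates of its tuples under suitable standard automorphisms, and hence be infinite. But this line of attack makes no use of the hypothesis that $\BB$ is infinitely extendable; and without that hypothesis the conclusion is simply false, since large finite maximal $k$-cliques can exist in $N$. The ``bookkeeping'' you anticipate is not merely tedious---there is no reason to expect it to go through.

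The paper avoids the case split entirely. Given a homogeneous $\BB_0=\{\nbar_s\mbar_s:s\in I\}$ of size at least $2$, it extends both parts simultaneously inside $N$. The $\{\mbar_s\}$ form an $\ell$-clique in $M$ which, by infinite extendability of $\BB_0$ and $k$-fullness of $M$, sits inside an infinite $\ell$-clique in $M$; this supplies a fresh $\mbar^*\in M$. The key step you are missing is on the $\nbar$-side: homogeneity of $\BB_0$ means the $\nbar_s$ all lie in a single orbit under the permuted standard automorphisms of $N$, so one can choose $\pi\in\mathrm{Aut}(N)$ fixing $\nbar_0$ and sending $\nbar_1$ to a fresh $\nbar^*$ disjoint from $\bigcup\BB_0$. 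A short exchangeability computation (using $\mbar_1\sim\mbar^*$, then $\pi$, then $\bbar_0\sim\bbar_1$, then $\pi^{-1}$) shows that $\BB_0\cup\{\nbar^*\mbar^*\}$ is again a homogeneous $k$-clique in $N$. Iterating produces an infinite $k$-clique in $N$ containing $\BB_0$, and Lemma~\ref{meld} then contradicts the maximality of a finite $\BB$.

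So the missing idea is not a finer analysis of the $\nbar$-patterns, but rather to manufacture new $\nbar$'s directly from the orbit structure guaranteed by homogeneity.
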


\begin{proof} By two applications of the pigeonhole principle, we can compute a $C'$ so that any $k$-clique of size $C'$ contains a homogeneous $k$-clique $\B_0$ with $|\B_0| \geq 2$. The result will follow by infinitely iterating the following claim to show $\B_0$ is infinitely extendable. By Lemma  \ref{safeextend} $\BB$ will remain a $k$-clique in the corresponding clique-extension, and so be infinitely extendable by Lemma \ref{meld}.
\begin{claim*}
  Suppose $\B_0 \subset N$ is a finite, homogeneous, infinitely extendable
 $k$-clique of size at least 2.  Then there is a proper extension $\B_1\supsetneq\B_0$ that is also 
homogeneous.
\end{claim*}

\begin{claimproof}  First, since $\B_0=\set{\nbar_s\mbar_s:s\in I}$ is a $k$-clique in $N$, the subsequences $\set{\mbar_s:s\in I}$ form an $\ell$-clique in $M'$, where $\ell=\lg(\mbar)$.
Because $\B_0$ is infinitely extendable, so is $\set{\mbar_s:s\in I}$.  As $M'$ is $\ell$-full, we can find some $\mbar^*$ so that $\set{\mbar_s:s\in I}\cup\set{\mbar^*}$
is an $\ell$-clique in $M'$, and thus in $N$, as $M' \subset N$ is a $k$-clique-preserving extension. (If $\mbar_s$ is empty, this may be ignored.)

Choose a permuted standard automorphism $\pi \in Aut(N)$ such that $\pi$ fixes $\nbar_0$ and $\pi(\nbar_1)$ is disjoint from $\bigcup \BB_0$ (the existence of $\pi$ uses the homogeneity of $\BB_0$). Let $\nbar^*:=\pi(\nbar_1)$.
We claim that $\B_0\cup\set{\nbar^*\mbar^*}$ is a homogeneous $k$-clique.  The homogeneity is clear from the construction. We now show $\set{n_0m_0, n^*m^*}$ is a $k$-clique, and that $\B_0\cup\set{\nbar^*\mbar^*}$ is a $k$-clique will follow by Lemma \ref{meld}.
\begin{align*}
\tp(\nbar^*\mbar^*\nbar_0\mbar_0/(N \bs \nbar^*\mbar^*\nbar_0\mbar_0)) &=
\tp(\nbar^*\mbar_1\nbar_0\mbar_0/(N \bs \nbar^*\mbar_1\nbar_0\mbar_0)) \\
&= \tp(\nbar_1\mbar_1\nbar_0\mbar_0/(N \bs \nbar_1\mbar_1\nbar_0\mbar_0)) \\
&= \tp(\nbar_0\mbar_0\nbar_1\mbar_1/(N \bs \nbar_1\mbar_1\nbar_0\mbar_0)) \\
&= \tp(\nbar_0\mbar_0\nbar^*\mbar_1/(N \bs \nbar^*\mbar_1\nbar_0\mbar_0)) \\
&= \tp(\nbar_0\mbar_0\nbar^*\mbar^*/(N \bs \nbar^*\mbar^*\nbar_0\mbar_0))
\end{align*}
We have used that $\set{\mbar_1, \mbar^*}$ is an $\ell$-clique in lines 1 and 5, applied $\pi^{-1}$ to get to line 2, used that $\set{n_0m_0, n_1m_1}$ is a $k$-clique to get to line 3, and applied $\pi$ to get to line 4. 
\end{claimproof}
\end{proof}

\section{Non-mutually algebraic $T$} \label{sec:notma}

\begin{theorem}  \label{thm:not ma}
If $M$ is a non-mutually algebraic model of $T$, then there is an age-preserving $N\supseteq M$ with $2^{\aleph_0}$ siblings.
\end{theorem}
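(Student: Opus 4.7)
The plan is to argue by contradiction: assume no age-preserving extension of $M$ has $2^{\aleph_0}$ siblings, then produce one anyway. First, apply Lemma \ref{ez} to pass to an age-preserving extension that is array-minimal of some index $k \geq 1$, and then Lemma \ref{full} to further arrange $k$-fullness and $(\leq k)$-clique-preservation while keeping the other properties; continue to call the structure $M$, and observe that the contradiction hypothesis transfers through age-preserving extensions. Proposition \ref{quote} then supplies an $R \in \LL$ and an indiscernible $(k,R)$-grid extension $N \supseteq M$. Among all such grid extensions, choose one of minimum rank, so that Lemma \ref{lemma:ai max} guarantees each $\AA_q$ is a maximal $k$-clique in $N$.

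Following the template of Proposition \ref{one}, fix a family $\FF = \set{J_\alpha : \alpha \in 2^{\aleph_0}}$ of subsets of $(0,1) \cap \Q$ such that each $(J_\alpha, \leq)$ embeds $(\Q, \leq)$ and the $(J_\alpha, \leq)$ are pairwise non-isomorphic. Put $Q_\alpha = (-\infty, 0] \cup J_\alpha \cup [1, \infty)$ and let $N_\alpha$ be the substructure of $N$ with universe $M \cup \bigcup_{q \in Q_\alpha} \bigcup \AA_q \cup \set{\ebar_{q,r} : q<r \in Q_\alpha}$. The inclusion $N_\alpha \into N$ is immediate. Conversely, since $(-\infty, 0]$ is itself order-isomorphic to $(\Q, \leq)$, fix an order-embedding $g \colon \Q \to Q_\alpha$; by the indiscernibility of the grid, the map $\abar_{q,i} \mapsto \abar_{g(q), i}$, $\ebar_{q,r} \mapsto \ebar_{g(q), g(r)}$, identity on $M$, is an embedding $N \into N_\alpha$. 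Hence each $N_\alpha$ is a sibling of $N$.

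It remains to show that for each $\alpha$, only countably many $\beta$ satisfy $N_\beta \cong N_\alpha$; this will exhibit $2^{\aleph_0}$ pairwise non-isomorphic siblings of $N$, giving the desired contradiction since $N$ is age-preserving over $M$. As in Proposition \ref{one}, it suffices to fix a finite $F \subseteq N$ such that any isomorphism $h \colon N_\beta \to N_\alpha$ fixing $F$ pointwise forces an order-isomorphism $(J_\beta, \leq) \cong (J_\alpha, \leq)$, and thus $\alpha = \beta$. Let $F$ contain $\abar_{0,0}, \abar_{1,0}, \dbar_{0,1}, \dbar_{-1,0}, \dbar_{1,2}$, and a finite permissibility-witness set $G$ analogous to that of Lemma \ref{G} (here ``permissible'' also covers swaps of tuples within each $\AA_q$, which are automatic from the clique structure). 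Using $R$ and these parameters, construct a formula $\delta(\xbar)$ whose realizations in $N^k$ are exactly the permissible permutations of $\abar_{q,i}$ with $q \in (0,1)$, $i \in \omega$. Given such $\delta$, $h$ must send each $\abar_{q,0}$ with $q \in J_\beta$ to a permissible permutation of some $\abar_{g(q), i}$ with $g(q) \in J_\alpha$, and the $R$-pattern with $\dbar_{0,1}$ forces the induced $g \colon J_\beta \to J_\alpha$ to be order-preserving; the symmetric argument for $h^{-1}$ makes $g$ an order-isomorphism.

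The main obstacle is constructing $\delta$ and proving the analogue of Corollary \ref{when}: that no other tuple realizes $\delta$. Ruling out permutations of $\abar_{q,i}$ for $q \notin (0,1)$ is handled by the $R$-pattern with $\dbar_{0,1}, \dbar_{-1,0}, \dbar_{1,2}$ encoded into $\delta$ (the grid indiscernibility reduces this to a finite case analysis on order types). Ruling out hybrid tuples --- those spanning several $\AA_q$'s, or mixing entries from distinct $\AA_q$'s with pieces of the $\dbar$'s and $M$ --- requires replaying the hybrid induction of Lemmas \ref{stub} and \ref{trans} in the grid setting: Lemma \ref{lemma:stub2} plays the role of Lemma \ref{stub} for associated proper subsequences of $\abar_{q,i}, \abar_{r,j}$, and Lemma \ref{lemma:ai max} ensures that any $k$-tuple forming a $k$-clique with some $\abar_{q,0}$ is necessarily a permutation of some $\abar_{q,j}$, preventing pure clique-style hybrids from realizing $\delta$.
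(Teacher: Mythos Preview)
Your proposal tries to replay the argument of Proposition \ref{one} inside the grid extension, recovering the order type of $J_\alpha$ from $N_\alpha$ via a fixed finite parameter set $F$. This does not go through, and the paper's proof uses an entirely different mechanism.

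The specific obstruction is in your order-recovery step. In Proposition \ref{one} the sequence was \emph{strictly} order indiscernible over $M$, which supplied a formula $\theta(\xbar,\ybar)$ over a \emph{fixed} finite $H \subseteq N^*$ such that $\theta(\abar_q,\abar_r)$ holds iff $q<r$. In the grid extension no such fixed-parameter order formula exists: the only order information is $R(\abar_{q,i},\dbar_{q,r})\wedge\neg R(\abar_{r,i},\dbar_{q,r})$, where $\dbar_{q,r}$ \emph{varies with the pair} $q<r$. Your suggestion that ``the $R$-pattern with $\dbar_{0,1}$'' forces the order on $J_\beta$ fails because, by grid indiscernibility, every standard automorphism fixing $0$ and $1$ fixes $\dbar_{0,1}$ yet moves $(0,1)\cap\Q$ transitively; hence $R(\abar_{q,i},\dbar_{0,1})$ takes the same value for all $q\in(0,1)$ and cannot compare them. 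The same obstruction blocks the construction of your $\delta(\xbar)$: there is no fixed finite $F$ over which a formula isolates exactly the $\abar_{q,i}$ with $q\in(0,1)$, since the grid definition only tells you $R(\abar_{q,i},\dbar_{q,r})$ for the matching $\dbar$'s. You also invoke $k$-fullness (Lemma \ref{full}) in your setup but never use it; this is a sign the argument has drifted from the intended one.

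The paper's proof abandons the order-type invariant altogether. After passing to a $k$-full $M'$ and a minimum-rank grid extension $N\supseteq M'$, it fixes a dense/codense $D\subseteq\Q$ and, for each injective $f:D\to\omega\setminus[C]$, \emph{shrinks} each $\AA_q$ with $q\in D$ to a finite subclique $\AA_q^*$ of size $f(q)$, leaving the $\AA_q$ with $q\in D^c$ intact. The resulting $N_f$ are siblings of $N$ (via the $D^c$-copy), and they are distinguished by the set $\mathrm{Im}(f)$ of sizes of finite, infinitely-extendable, maximal $k$-cliques. That this set is an isomorphism invariant is the content of the key Lemma \ref{lemma:only ai}, whose proof uses $k$-fullness through Lemma \ref{good} and array-minimality to rule out any other large finite infinitely-extendable maximal $k$-clique. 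So the invariant is a set of natural numbers, not an order type, and the hybrid analysis you sketch is replaced by the clique-extendability analysis of Lemmas \ref{good} and \ref{lemma:only ai}.
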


\begin{proof}
First take an age-preserving $M'' \supseteq M$ that is array-minimal of index $k$, by Lemma \ref{ez}. Then by Lemma \ref{full}, let $M' \contains M''$ be a $k$-full age-preserving, $k$-clique-preserving extension. Suppose $M'$ has no age-preserving extension with $\cont$ siblings, and by Proposition \ref{quote}, let $N \contains M'$ be an indiscernible $(k, R)$-grid extension over $M'$, for some $R \in \LL$, of minimum rank. We will show $N$ has $\cont$ siblings, which is a contradiction.

Choose a dense/codense subset $D \subseteq \Q$, and let $D^c = \Q \bs D$. Using the notation of Definition \ref{def:grid}, let
$N_{D^c}$ be the substructure of $N$ with universe $M'\cup\set{\abar_{i,\ell}:i\in D^c,\ell\in\Q}\cup \set{\ebar_{i,j}:i<j, i,j\in D^c}$.
By the indiscernibility, $N_{D^c}$ is isomorphic to $N$ over $M'$.  Thus, any  model $N^*$ satisfying $N_{D^c}\subseteq N^* \subseteq N$ is a sibling of $N$,  in fact via embeddings that fix $M'$ pointwise.

Let $r$ be the maximum arity of the language, let $C'$ be from Lemma \ref{good}, and choose $C$ such that any $k$-clique of size at least $C$ contains a homogeneous $k$-clique of size $\max(C', 2k+r)$. Given an injective $f \colon D \to \omega \bs [C]$, we construct $N_f \subset N$ by restricting $\AA_q$ to a subset $\AA^*_q$ of size $f(q)$, for each $q \in D$. It remains to show the $N_f$ are pairwise non-isomorphic. The following claim is sufficient, as being an infinitely extendable $k$-clique of size $n$ is type-definable.

\begin{claim*} \label{claim:Nf}
For any $n\ge C$, $N_f$ has an infinitely extendable maximal $k$-clique of size $n$ if and only if $n\in Im(f)$.
\end{claim*}
\begin{claimproof}
$(\Leftarrow)$ Let $q \in \Q$ be such that $f(q) = n$. First, $N$ is visibly a clique extension of $N_f$, hence $N$ is $(\le k)$-clique-preserving by Lemma \ref{safeextend}. Thus, as $\AA_q$ is a maximal $k$-clique in $N$ by Lemma \ref{lemma:ai max}, $\AA^*_q$ is a maximal $k$-clique in $N_f$. As it is infinitely extendable to $\AA_q$, we are finished.

$(\Rightarrow)$ This will follow immediately from Lemma \ref{lemma:only ai}.
\end{claimproof}
\end{proof}

\begin{lemma} \label{lemma:only ai}
Let $C \in \omega$, $D \subset \Q$, $N_f$, and $\set{\AA^*_q: q \in D}$ be as in the proof of Theorem \ref{thm:not ma}. If $\BB \subset (N_f)^k$ is a finite infinitely extendable maximal $k$-clique of size at least $C$, then there is some $q \in D$ such that each element of $\BB$ is a permutation of some element of $\AA^*_q$.
\end{lemma}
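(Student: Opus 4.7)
The plan is to extract a homogeneous sub-clique $\BB_0 \subseteq \BB$ and leverage the grid structure of the ambient $N$ to force $\BB_0$, and then all of $\BB$, into a single $\AA^*_q$ with $q \in D$.

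By the choice of $C$ in the proof of Theorem \ref{thm:not ma}, $\BB$ contains a homogeneous $k$-clique $\BB_0$ of size at least $\max(C', 2k+r)$. Since $\BB$ is infinitely extendable in $N_f$ via some infinite $k$-clique $\CC \supseteq \BB$ in an age-preserving extension $N_f^\star \supseteq N_f$, the same $\CC$ witnesses that $\BB_0$ is infinitely extendable in $N_f$.

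The central structural step is to show that each element of $\BB_0$ is a permutation of some element of $\AA_q$ for a single $q \in \Q$. The strategy is to argue that $\BB_0$ is in fact a $k$-clique in the full grid extension $N$, not merely in $N_f$, and is infinitely extendable there. For the former, any hypothetical witness $\wbar \in N \setminus N_f$ to a failure of exchangeability among tuples in $\BB_0$ consists of elements from $\AA_q \setminus \AA^*_q$ for various $q \in D$; total indiscernibility of each $\AA_q$ over its complement (Proposition \ref{prop:ti equiv}), combined with $f(q) \geq C$, permits such a $\wbar$ to be transported inside $N_f$, contradicting the exchangeability of $\BB_0$ in $N_f$. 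For the latter, using that the age of $N_f^\star$ coincides with the age of $N$, a compactness-and-amalgamation argument produces an age-preserving extension $N^\dagger \supseteq N$ containing an infinite $k$-clique extending $\BB_0$. Lemma \ref{good} applied in $N$ then forces the maximal extension of $\BB_0$ in $N$ to be infinite, and Lemma \ref{lemma:ai max} identifies any such infinite maximal $k$-clique as a permutation of $\AA_q$ for some $q \in \Q$.

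We must have $q \in D$: for $q \in D^c$, $\AA_q \subseteq N_f$ is already an infinite $k$-clique whose elements contain, up to permutations, all of $\BB_0$'s elements, and the maximality of $\BB$ in $N_f$ would then force $\BB$ to contain permutations of all elements of $\AA_q$, contradicting finiteness of $\BB$. With $q \in D$, each element of $\BB_0$ is a permutation of some element of $\AA^*_q$. To propagate this from $\BB_0$ to all of $\BB$, observe that the proof of Lemma \ref{lemma:ai max} adapts verbatim to $N_f$ once $f(q) \geq C$ guarantees that the witnesses $\abar_{s,\ell}$ used there can be chosen inside the available $\AA^*_s$; for any $\bbar \in \BB$, the exchangeability in $N_f$ of $\bbar$ with some $\abar_{q,0} \in \BB_0$ then forces $\bbar$ to be a permutation of some element of $\AA^*_q$, as desired.

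The hard part is the two-fold transfer between $N$ and $N_f$ in the second paragraph: both the $N$-exchangeability of $\BB_0$'s tuples and the existence of an $N$-extension witnessing the infinite extendability of $\BB_0$ must be deduced from their $N_f$-counterparts, using grid indiscernibility together with the size assumption $f(q) \geq C$. A subsidiary concern is the degenerate case $\BB_0 \subseteq M'$, where the homogeneous sub-clique lies entirely in the base model: here $k$-fullness of $M'$ combined with the infinite extendability of $\BB_0$ would produce an infinite $k$-clique inside $M' \subseteq N_f$ extending $\BB_0$, and then maximality of $\BB$ in $N_f$ would force $\BB$ to be infinite, contradicting the hypothesis.
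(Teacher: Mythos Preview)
Your argument has a genuine gap at its central step. You write that after showing $\BB_0$ is an infinitely extendable $k$-clique in $N$, ``Lemma \ref{good} applied in $N$ then forces the maximal extension of $\BB_0$ in $N$ to be infinite, and Lemma \ref{lemma:ai max} identifies any such infinite maximal $k$-clique as a permutation of $\AA_q$ for some $q \in \Q$.'' But Lemma \ref{lemma:ai max} does not say this. It only says that each $\AA_q$ is itself maximal (nothing exchangeable with $\abar_{q,0}$ lies outside $\AA_q$ up to permutation); it gives no classification of the infinite maximal $k$-cliques of $N$. There is no reason the maximal extension of $\BB_0$ in $N$ should equal some $\AA_q$: for instance, $M'$ may well contain infinite $k$-cliques of its own, and a homogeneous $\BB_0$ need not lie entirely inside $M'$ for its maximal extension to involve such a clique. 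So the identification step fails, and with it the route to pinning down a specific $q$.

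A secondary issue is the hand-waved ``compactness-and-amalgamation'' transfer of infinite extendability from $N_f$ to $N$. The age in question need not have amalgamation, so this step is not justified as stated; at minimum it would need a more careful argument via clique extensions.

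The paper's proof proceeds quite differently and does not attempt to identify the maximal extension of $\BB_0$ in $N$. Instead, it first argues (via Lemma \ref{good}) that $\bigcup\BB^-$ must meet some $\bigcup\AA^*_q$ with $q\in D$, then takes a \emph{proper} subtuple $\cbar_0=\bbar_0\cap\abar_{q,j}$ of length $k'<k$, shows the associated subtuples $\CC=\{\cbar_i\}$ form a $k'$-clique (using the size $\ge 2k+r$ of $\BB^-$ to relocate any putative witness of non-exchangeability), and finally transports $\CC$ by standard automorphisms $\sigma^*_r$ to produce infinitely many distinct elements of $\Arr_{k'}(N)$, contradicting array-minimality of index $k$. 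The passage from $\BB^-$ to $\BB$ at the end \emph{does} use Lemma \ref{lemma:ai max}, but only in its intended direction: once one element of $\BB^-$ is known to be a permutation of some $\abar_{q,i}$, any $\bbar\in\BB$ is exchangeable with it in $N$ (since $N$ is a clique extension of $N_f$), and then Lemma \ref{lemma:ai max} forces $\bbar$ into $\AA_q$ as well.
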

\begin{proof}
Suppose not. We now work within $N_f$. Suppose $|\BB| \geq C$, let $n=\max(C', 2k+r)$ (where $C'$ is from Lemma \ref{good} and $r$ is the maximum arity of the language), and let $\set{\bbar_i:i <n} = \BB^- \subset \BB$ be a homogeneous $k$-clique. We first prove the conclusion for $\BB^-$. There must be some $q \in D$ such that $\bigcup \BB^-$ intersects $\bigcup \AA^*_q$; otherwise $\BB^-$ would be infinitely extendable by Lemma \ref{good}. Pick such a $q$. There is at least one $j$ such that $\bbar_0 \cap \abar_{q,j} \neq \emptyset$, so let $\cbar_0 = \bbar_0 \cap \abar_{q,j}$, and let $\lg(\cbar_0) = k' < k$ (this inequality is strict by our assumption that $\bbar_0$ is not a permutation of $\abar_{q,j}$). For each $i$, let $\cbar_i$ be the subtuple of $\bbar_i$ associated with $\cbar_0$, and let $\CC = \set{\cbar_i:i < n}$. By relabeling, we may assume $\cbar_i = \bbar_i \cap \abar_{q,i}$.

\begin{claim*}
$\CC$ is a $k'$-clique.
\end{claim*}
\begin{claimproof}
Suppose $\cbar_0 \not\sim \cbar_1$, as witnessed by $\wbar$, with $\lg(\wbar) \leq r$. Then $\wbar \cap (\bbar_0 \cup \bbar_1) \neq \emptyset$, otherwise $\wbar$ would witness $\bbar_0 \not\sim \bbar_1$.

As $\BB^-$ is sufficiently large, by relabeling we may suppose $\wbar$ does not intersect $\bbar_2 \cup \bbar_3$. Let $\pi$ be the automorphism swapping $\abar_{q,0}$ with $\abar_{q,2}$ and swapping $\abar_{q,1}$ with $\abar_{q,3}$, while fixing everything else. Then $\pi(\wbar)$ witnesses $\cbar_2 \not\sim \cbar_3$, but $\pi(\wbar) \cap (\bbar_2 \cup \bbar_3) = \emptyset$, which is a contradiction.
\end{claimproof}

Now work in $N$, and note that $\CC$ remains a $k'$-clique in $N$ by Lemma \ref{safeextend}, since $N$ is a clique extension of $N_f$. 
For each $r \in \Q$, let $\sigma^*_r$ be a standard automorphism sending $\AA_q$ to $\AA_r$.  
Each $\sigma^*_r(\CC)$ is a $k'$-clique  that extends to an infinite $k'$-clique within $N$.
However,  for $r_1 \neq r_2$, $\sigma^*_{r_1}(\cbar_0) \not\sim \sigma^*_{r_2}(\cbar_0)$, since $\sigma^*_{r_1}(\abar_0) \not\sim \sigma^*_{r_2}(\abar_0)$, so the average types of these infinite extensions are distinct.  Thus, by Lemma \ref{average}, we conclude that $\Arr_{k'}(N)$ is infinite, 
contradicting that $M$ is array-minimal of index $k$.

Given the conclusion for $\BB^-$, it follows for $\BB$ by Lemma \ref{lemma:ai max}.
\end{proof}

\section{Mutually algebraic $T$} \label{sec:ma}

\subsection{The non-cellular case}
In this subsection, we prove that if $M$ is mutually algebraic but non-cellular, then it admits a countable elementary extension with $\cont$ siblings.

If $\LL$ is finite relational and $M$ is mutually algebraic, then by Theorem \ref{thm:ma at}, there is another finite relational language $\LL'$ in which every atomic relation is mutually algebraic, and such that $\LL'$ is quantifier-free interdefinable with an expansion of $\LL$ naming finitely many constants.  

Adding finitely many constants to our language changes our sibling count by at most a factor of $\az$, and so will not affect this subsection. Adding the constants and switching language to $\LL'$ as above, we may assume the following.

\medskip

\noindent \textbf{For this subsection, we assume $M$ is mutually algebraic in a finite relational language with mutually algebraic atomic relations.}

\begin{definition}

Given $M$ in a language with mutually algebraic atomic relations, we may construct a corresponding hypergraph $G_M$ on the same universe, placing an edge on a tuple $\mbar$ if $R$ holds on (some permutation of) $\mbar$ for some $R \in \LL$.

We call $A\subseteq M$ an {\em MA-connected part} if $A$ is a connected part of $G_M$.

Equivalently, we may use that if $\delta(x,\ybar)$ and $\theta(x,\zbar)$ are quantifier-free mutually algebraic with at least one variable symbol $x$ in common, then $\delta(x,\ybar)\wedge\theta(x,\zbar)$ is quantifier-free, mutually algebraic. Then $A\subseteq M$ is an {\em MA-connected part} iff, for all $a,b\in A$, there are $\set{c_2,\dots,c_n}\subseteq A$ and a quantifier-free mutually algebraic $\phi(x,y,\zbar)$ such that $M\models\phi(a,b,c_2,\dots,c_n)$

An {\em MA-connected component} is a maximal MA-connected part.
\end{definition}

\begin{lemma} \label{break}
The following points follow from the corresponding facts for connected parts of hypergraphs. 
\begin{enumerate}
\item  If $A,B\subseteq M$ are MA-connected parts and $A\cap B=\emptyset$, then $A\cup B$ is an MA-connected part.
\item Every MA-connected part is contained in a unique MA-connected component.
\item If $C$ is an infinite MA-connected part, there is a nested sequence $B_0 \subsetneq B_1 \subsetneq \dots$ such that $\cup_i B_i = C$ and each $B_i$ is a finite MA-connected part.
\end{enumerate}
\end{lemma}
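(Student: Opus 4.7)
The plan is to translate each clause into the analogous statement about connected parts of the hypergraph $G_M$ and invoke elementary hypergraph theory, since the excerpt has already given two formulations of ``MA-connected part''. The main preliminary step is to check that these two formulations actually agree. For the forward direction, a walk $a=v_0,v_1,\dots,v_n=b$ in $G_M$ yields, for each edge, an atomic (hence mutually algebraic) formula $R_i$ satisfied on the tuple $(v_{i-1},v_i,\wbar_i)$ of vertices in that edge; their conjunction is a quantifier-free mutually algebraic formula connecting $a$ to $b$ through the $v_i$ and $\wbar_i$, using the closure of mutual algebraicity under conjunction along a shared variable that the excerpt records. Conversely, given $M\satisfies\phi(a,b,\cbar)$ with $\phi$ quantifier-free mutually algebraic, writing $\phi$ in disjunctive normal form and picking a satisfied disjunct reduces to a conjunction of (negated) atomic relations; the positive atomic instances among these are edges of $G_M$ on subtuples of $\{a,b\}\cup\cbar$, and one extracts from them a walk from $a$ to $b$ through $\cbar$.

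With this equivalence installed, the three clauses become standard facts for hypergraphs. For (1), which I read as requiring $A\cap B\neq\emptyset$ (the stated hypothesis is plainly a typo, since disjoint connected parts need not have connected union), fix $v\in A\cap B$ and, for any $x,y\in A\cup B$, concatenate a walk $x\to v$ inside whichever of $A,B$ contains $x$ with a walk $v\to y$. For (2), set $C_a:=\bigcup\set{A: A \text{ an MA-connected part with } a\in A}$; iterated application of (1) shows $C_a$ is itself MA-connected, and it is manifestly the maximal such set containing $a$, giving the unique MA-connected component through $a$. For (3), enumerate $C=\set{c_0,c_1,\dots}$, choose, for each $i$, a finite walk $P_i\subseteq C$ in $G_M$ from $c_0$ to $c_i$, and let $B_n:=\bigcup_{i\le n}P_i$; each $B_n$ is finite, and MA-connected by (1) since the $P_i$ all share $c_0$, and $\bigcup_n B_n = C$. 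Thinning out any stationary steps gives the strict chain $B_0\subsetneq B_1\subsetneq\dots$.

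The only genuine obstacle is the initial verification that the two definitions of MA-connected part coincide, and even that is short once the closure properties of mutually algebraic formulas from Definition \ref{def:ma} and Theorem \ref{thm:ma at} are on the table; the rest is a transcription of elementary facts about hypergraph connectivity, which is presumably why the excerpt defers the proof with a one-line reference.
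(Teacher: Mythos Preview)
The paper offers no proof beyond the one-line assertion that the three claims are standard hypergraph facts; your arguments for (1)--(3) are correct, match that intent exactly, and you rightly flag the typo in the hypothesis of~(1).

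The preliminary equivalence check you insert is not required for the lemma (the statement already commits to the hypergraph formulation), and its backward direction has a genuine gap: after passing to a satisfied DNF disjunct, nothing forces the positive atomic conjuncts to link $a$ and $b$. For instance, if only finitely many points are $R$-reflexive, then $R(x,x)\wedge R(y,y)\wedge x\neq y$ is quantifier-free mutually algebraic, yet its positive atoms place only singleton hyperedges on $x$ and on $y$, giving no walk between them in $G_M$. So the two formulations are not literally equivalent as written in the paper. Fortunately the lemma and all of its later uses depend only on the hypergraph definition, so this wrinkle does not affect your proof of (1)--(3).
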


Suppose $M$ and $N$ are siblings.  Then $Age(M) = Age(N)$ and so if $M$ thinks that $\delta(x_1,\dots,x_n)$ is mutually algebraic, then $N$ also thinks this.
Using this fact, we have:

\begin{lemma}  Suppose $M$ and $N$ are siblings and $f:M\rightarrow N$ is an embedding.
Then for any MA-connected part $A\subseteq M$, $f(A)$ is an MA-connected part of $N$.
Thus, if $C\subseteq M$ is an MA-connected component, then $f(C)$ is contained in an MA-connected component
as well.
\end{lemma}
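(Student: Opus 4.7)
The plan is to push MA-connection witnesses from $M$ through $f$, using the transfer of mutual algebraicity of individual formulas that the authors flagged immediately before the lemma. Since $M$ and $N$ are siblings they are bi-embeddable, hence $Age(M)=Age(N)$, so they satisfy the same universal $\LL$-sentences. Mutual algebraicity of a quantifier-free $\phi(\xbar)$ with bound $K$ is captured by the universal sentence asserting that no element belongs to more than $K$ distinct tuples satisfying $\phi$; thus any quantifier-free $\phi$ that is mutually algebraic in $M$ is mutually algebraic in $N$ with the same bound.

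To prove the first assertion, I would fix $a,b\in A$ and invoke the MA-connectedness of $A$ in $M$: by the equivalent formulation recorded just before the lemma, there are $c_2,\dots,c_n\in A$ and a quantifier-free mutually algebraic $\phi(x,y,\zbar)$ with
\[M\satisfies\phi(a,b,c_2,\dots,c_n).\]
Since $f$ is an embedding and $\phi$ is quantifier-free,
\[N\satisfies\phi(f(a),f(b),f(c_2),\dots,f(c_n)).\]
By the transfer discussed above, $\phi$ is still quantifier-free mutually algebraic in $N$, and the witnessing tuple $f(c_2),\dots,f(c_n)$ lies in $f(A)$. Thus $f(a)$ and $f(b)$ are MA-connected within $f(A)$, and since $a,b\in A$ were arbitrary, $f(A)$ is an MA-connected part of $N$.

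For the second assertion, if $C\subseteq M$ is an MA-connected component then the first part shows $f(C)$ is an MA-connected part of $N$, and Lemma \ref{break}(2) places $f(C)$ inside the unique MA-connected component of $N$ containing it.

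The only nontrivial ingredient is the formula-level transfer of mutual algebraicity, and this has already been set up by the authors in the paragraph preceding the lemma; once it is in hand, the rest is a one-step verification that quantifier-free facts move across embeddings.
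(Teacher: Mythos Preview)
Your proof is correct and follows precisely the approach the paper intends: the paper does not give an explicit proof of this lemma, but instead records the key observation immediately before it (that siblings agree on which quantifier-free formulas are mutually algebraic), and your argument is exactly the routine verification the authors leave to the reader. The only substantive step is the transfer of mutual algebraicity, which you justify correctly via a universal sentence, and the rest is pushing quantifier-free witnesses through the embedding together with Lemma~\ref{break}(2).
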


\begin{lemma}  \label{suff}  Suppose $M$ is mutually algebraic and there is an infinite set $\set{C_i:i\in\omega}$ of components
such that for each $i$, $C_i$ properly embeds into $C_{i+1}$, but there is no embedding of $C_{i+1}$ into $C_i$.
Then $M$ has $2^{\aleph_0}$ siblings.
\end{lemma}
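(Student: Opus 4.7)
The plan is to build $\cont$ pairwise non-isomorphic siblings of $M$ distinguished by the multiplicities of the MA-component isomorphism types $\mathcal{C}_i := [C_i]_\cong$; by the lemma just above, any isomorphism between siblings of $M$ must preserve this multiset.  Since $C_i$ embeds properly into $C_{i+1}$ but $C_{i+1}$ does not embed into $C_i$, composing embeddings shows that $C_j$ does not embed into $C_i$ whenever $i<j$, so the $\mathcal{C}_i$ are pairwise distinct.  For each $i$, let $m_i\in\set{1,2,\dots}\cup\set{\aleph_0}$ be the multiplicity of $\mathcal{C}_i$ among the MA-components of $M$.  By passing to a subsequence of $(C_i)$ I may assume either (A) every $m_i$ is finite, or (B) every $m_i=\aleph_0$.

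In Case A, for each infinite $S\subseteq\omega$ let $N_S$ be $M$ with exactly one $\mathcal{C}_i$-component deleted for each $i\notin S$.  Then $N_S\into M$ is automatic.  For the reverse embedding $M\into N_S$ I shift components along the chain: choose a strictly increasing $\sigma\colon \omega\to S$ so that $S\setminus\sigma(\omega)$ is infinite, embed each $\mathcal{C}_{s_k}$-component of $M$ into the $\mathcal{C}_{\sigma(k)}$-component of $N_S$ through the chain embedding $C_{s_k}\into\dots\into C_{\sigma(k)}$, and use the freed $N_S$-components of type $\mathcal{C}_{s'}$ for $s'\in S\setminus\sigma(\omega)$ as destinations, via further chain embeddings, for the displaced $M$-copies of $\mathcal{C}_i$ with $i\notin S$.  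A back-and-forth assigns each such $i$ to a distinct $s'\in S\setminus\sigma(\omega)$ with $s'\ge i$, which is possible because $S\setminus\sigma(\omega)$ is cofinal in $\omega$.  The multiplicity of $\mathcal{C}_i$ in $N_S$ equals $m_i$ if $i\in S$ and $m_i-1$ otherwise --- finite and distinct --- so $N_S\cong N_T$ forces $S=T$, yielding $\cont$ pairwise non-isomorphic siblings.

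In Case B, for each infinite coinfinite $S\subseteq\omega$ let $N_S$ consist of all non-$\mathcal{C}_i$ components of $M$, together with $\aleph_0$ copies of $\mathcal{C}_i$ for $i\in S$ and a single copy for $i\notin S$.  Trivially $N_S\into M$ as a substructure.  For $M\into N_S$ I fix in advance, for each $i\notin S$, some $j(i)\in S$ with $j(i)\ge i$, and split the $\aleph_0$-block of $\mathcal{C}_j$-slots of $N_S$ (for each $j\in S$) into countably many disjoint countably-infinite sub-blocks: one is used to embed the $M$-copies of $\mathcal{C}_j$ itself, and the remaining sub-blocks each absorb, via chain embeddings, the $M$-copies of some $\mathcal{C}_i$ with $j(i)=j$.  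The multiplicity of $\mathcal{C}_i$ in $N_S$ is $\aleph_0$ if $i\in S$ and $1$ otherwise, so once more $N_S\cong N_T$ iff $S=T$.

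I expect the main obstacle to be verifying the embedding $M\into N_S$ in Case A when $m_i=1$: the displaced $M$-copy of $C_i$ is then the only one of its type in $M$ and must be re-homed inside some strictly larger $C_{s'}$ of $N_S$ while the images of all other $M$-components are placed disjointly.  The shifting construction above handles this, provided the chosen destinations $s'\ge i$ for the various $i\notin S$ can be selected simultaneously and injectively from $S\setminus\sigma(\omega)$; this requires careful back-and-forth bookkeeping but no new ideas.
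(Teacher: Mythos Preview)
Your approach is correct in outline but more involved than the paper's, and the Case~A embedding is underspecified as written.  The paper avoids your case split on the multiplicities $m_i$ altogether: for infinite $S\subseteq\omega$ it takes $N_S$ to consist of all components of $M$ that do \emph{not} embed into any $C_i$ (the ``out-of-scope'' components) together with $\{C_i:i\in S\}$, deleting every within-scope component except the chosen $C_i$'s.  Then $M\hookrightarrow N_S$ is a one-line greedy construction (each within-scope component of $M$ embeds into cofinitely many $C_i$, so assign them one at a time to unused $C_i$'s with $i\in S$), and non-isomorphism is immediate since $N_S$ has a component isomorphic to $C_i$ iff $i\in S$.

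By contrast, your $N_S$ in Case~A retains the full multiplicity data, which forces you to work harder for $M\hookrightarrow N_S$.  As written, your shift via $\sigma$ is not well-defined: the symbol $s_k$ is never introduced, and if you intend to push all $m_k$ components of type $\mathcal{C}_k$ into $\mathcal{C}_{\sigma(k)}$-slots you must contend with the possibility $m_k>m_{\sigma(k)}$, which your description does not address.  The same greedy argument the paper uses would repair this (for each $K$ there remain infinitely many slots of type $\mathcal{C}_j$ with $j\ge K$ in your $N_S$, so assign components one by one), so the gap is expository rather than conceptual.  Your Case~B is clean and correct.  The upshot is that the paper's coarser choice of $N_S$ makes both the embedding and the non-isomorphism step trivial and removes the need for the A/B dichotomy entirely.
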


\begin{proof}  Call an MA-connected component $Z$ {\em outside the scope} if there is no embedding of $Z$ into any $C_i$.  
Let $Z^*=\bigcup\set{Z: Z \text{ is outside the scope}}$. Note that any MA-connected component inside the scope embeds into all but finitely many $C_i$.  For each  infinite $S\subseteq\omega$, let
$N_S$ be the substructure of $N$ with universe $Z^*\cup\set{C_i:i\in S}$.  

We first argue that each $N_S$ is a sibling of $M$.  Fix any infinite $S\subseteq\omega$.  
Enumerate the MA-connected components $\set{Y_j:j\le\omega}$ of $M$ that are within the scope.
Inductively define a mapping $h:M\rightarrow N_S$ as the union of a chain of mappings $\<h_n:n\in\omega\>$
as follows.  Let $h_0:Z^*\rightarrow N_S$ be the identity.  Assume that $h_j:N^*\cup\set{Y_t:t<j}\rightarrow N_S$ has been defined.
Given $Y_j$, choose some $i$ not already chosen so that $Y_j$ embeds into $C_i$, and let $h_{j+1}$ extend $h_j$ by mapping $Y_j$ into $C_i$.

To see the $N_S$ are pairwise non-isomorphic, note that $N_S$ contains an MA-connected component isomorphic to $C_i$ iff $i \in S$. As isomorphisms must map MA-connected components to MA-connected components, we are finished.
\end{proof}

\begin{lemma} \label{cor} If $M$ contains infinite, pairwise isomorphic MA-connected components $\set{C_i : i \in \omega}$, then $M$ has $\cont$ siblings.
\end{lemma}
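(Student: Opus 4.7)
The plan will be to construct $\cont$ pairwise non-isomorphic siblings by a case split on how many ``finite approximations'' of the $C_i$ already occur as MA-connected components of $M$. Applying Lemma \ref{break}(3) to $C_0$, one first fixes a nested exhaustion $B_0 \subsetneq B_1 \subsetneq \cdots$ of $C_0$ by finite MA-connected parts, which is then transported along the given isomorphisms $C_i \cong C_0$ to obtain $B^i_n \subseteq C_i$ for each $i$. Letting $m_n$ denote the number of MA-connected components of $M$ isomorphic to $B_n$ (the $C_i$'s do not contribute, being infinite), the argument splits according to whether $m_n \geq 1$ cofinitely, or $m_n = 0$ infinitely often.

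In the first case, one selects for each large $n$ a distinct MA-connected component $D_n$ of $M$ isomorphic to $B_n$. The resulting tower will satisfy $D_n \hookrightarrow D_{n+1}$ (coming from $B_n \subsetneq B_{n+1}$) but $D_{n+1} \not\hookrightarrow D_n$ (by cardinality), so Lemma \ref{suff} applies and immediately yields $\cont$ siblings.

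In the remaining case, fix an enumeration $s_0 < s_1 < \cdots$ of indices with $m_{s_j} = 0$, and partition $\omega$ into an infinite set $A$ together with a sequence $\set{i_j : j \in \omega}$. For each $\alpha \in 2^\omega$, the sibling $N_\alpha \subseteq M$ will be obtained by keeping every non-$C_i$ component of $M$, keeping $C_i$ intact for $i \in A$, keeping $C_{i_j}$ intact when $\alpha(j) = 0$, and restricting $C_{i_j}$ to the finite piece $B^{i_j}_{s_j}$ when $\alpha(j) = 1$. Since no atomic relation can bridge distinct MA-connected components of $M$ (else maximality would fail), each surviving $B^{i_j}_{s_j}$ will itself be an MA-connected component of $N_\alpha$ isomorphic to $B_{s_j}$. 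Bi-embeddability with $M$ is then clear: inclusion provides $N_\alpha \hookrightarrow M$, while the infinitely many intact $C$-copies indexed by $A$ absorb all the $C_i$'s of $M$ under an embedding $M \hookrightarrow N_\alpha$. Because $m_{s_j} = 0$, the multiplicity of $B_{s_j}$-isomorphic components in $N_\alpha$ equals $\alpha(j) \in \set{0,1}$, which is an isomorphism invariant, so $\alpha$ can be recovered from the isomorphism type and the $N_\alpha$ are pairwise non-isomorphic.

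The main subtlety, and the reason the dichotomy cannot be avoided by the explicit construction alone, is that when $m_n = \aleph_0$ the addition of a single new $B_n$-component leaves the multiplicity unchanged, so no such isomorphism invariant distinguishes the $N_\alpha$'s; fortunately, this is precisely the regime in which $M$ already carries the tower of components needed to invoke Lemma \ref{suff}, so the two cases complement one another and exhaust all possibilities.
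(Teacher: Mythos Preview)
Your argument is correct, but it is more elaborate than necessary, and the paper's proof avoids your case split entirely.

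The paper does not try to apply Lemma~\ref{suff} to $M$ itself, nor does it build the $2^{\aleph_0}$ siblings by hand. Instead it produces a \emph{single} sibling $N$ of $M$ to which Lemma~\ref{suff} applies, and then uses that siblings of $N$ are siblings of $M$. Concretely: fix the exhaustion $B_0\subsetneq B_1\subsetneq\cdots$ of $C_0$ as you do, pick an infinite/co-infinite $X\subseteq\omega$, and form $N\subseteq M$ by restricting each $C_i$ with $i\in X$ down to a copy of $B_i$ while leaving all other components untouched. Then $M\hookrightarrow N$ via the intact $C_i$'s with $i\notin X$, so $N$ is a sibling of $M$; and inside $N$ the restricted pieces $\{B_i:i\in X\}$ are finite MA-connected components with $B_i\hookrightarrow B_{i'}$ for $i<i'$ and no reverse embedding (by cardinality), so Lemma~\ref{suff} applies to $N$. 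That is the entire proof.

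The point you identify in your final paragraph---that the explicit construction can fail when $m_n=\aleph_0$---is real, but the paper's manoeuvre makes it irrelevant: by first passing to the sibling $N$, one \emph{manufactures} the tower of finite components required by Lemma~\ref{suff}, regardless of what components $M$ already has. Your dichotomy handles the two situations separately (either $M$ already has such a tower, or one can record a $0/1$ invariant), whereas the paper uniformly forces the first situation in a sibling. Both routes work; the paper's is shorter and makes the reduction to Lemma~\ref{suff} the only idea needed.
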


\begin{proof} We will produce a sibling $N$ of $M$ satisfying the hypotheses of Lemma \ref{suff}, which
suffices.

Let $X \subset \omega$ be infinite/co-infinite. We will produce $N$ by shrinking each $C_i$ with $i \in X$. We will have that $M$ embeds into $N$ as we leave an infinite collection of $C_i$ unaltered.

As $C_0$ is infinite, by Lemma \ref{break}
write $C_0=\bigcup \set{B_i:i\in \omega}$, where each $B_i$ is a finite, MA-connected part and $B_i \subsetneq B_{i+1}$ for each $i$. We now construct $N \subset M$ by restricting $C_i$ down to an isomorphic copy of $B_i$, for each $i \in X$.
\end{proof}

\begin{theorem}[\cite{cma}] \label{thm:cma}
Let $\LL$ be finite relational, and suppose $M$ is a  mutually algebraic but non-cellular countable $\LL$-structure. Then there is some $M^* \succ M$ such that $M^*$ contains infinitely many new infinite MA-connected components, pairwise isomorphic over $M$.

Furthermore, we may take the universe of $M^*$ to be the universe of $M$ together with these new components.
\end{theorem}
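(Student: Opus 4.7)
The proof strategy, following the approach of the companion paper, is to locate an infinite ``template'' MA-connected component $C$ whose atomic type is realized in (an elementary extension of) $M$, and then use compactness to adjoin countably many pairwise disjoint isomorphic copies of it. The plan thus has three stages: extract $C$, realize many disjoint copies by compactness, and cut down to the desired universe via Tarski--Vaught.

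First, I would analyze the failure of cellularity using the structural decomposition of mutually algebraic structures. In a cellular mutually algebraic structure the MA-connected components fall into finitely many isomorphism classes with tightly controlled internal structure. Non-cellularity must break one of these conditions, and in every scenario one can extract, possibly after passing to a countable elementary extension $M_0 \succeq M$, an infinite MA-connected component $C \subseteq M_0$ whose ``blueprint'' (atomic diagram together with how it sits over $M$) is captured by a partial type $\pi(\bar y)$ over $M$ in variables $\bar y = (y_i : i \in \omega)$ that is coordinate-wise non-algebraic over $M$. The type $\pi$ also asserts that no atomic $\LL$-relation connects any $y_i$ to $M$, so that the realizations of $\pi$ form an MA-connected component disjoint, at the level of MA-edges, from $M$.

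Next, I would form the partial type $\Sigma(\bar y^\alpha : \alpha \in \omega) = \bigcup_{\alpha \in \omega} \pi(\bar y^\alpha)$ over $M$, further asserting that distinct superscripts give disjoint tuples and that no atomic relation connects any $y_i^\alpha$ with any $y_j^\beta$ for $\alpha \neq \beta$. Each finite fragment of $\Sigma$ is consistent with the elementary diagram of $M_0$ by mutual algebraicity: any given element of $M_0$ participates in only finitely many instances of any mutually algebraic formula, so in a sufficiently saturated elementary extension one may iteratively place fresh realizations of the relevant finite portions of $C$ without creating any unintended edges. By compactness, $\Sigma$ is realized in some $M' \succeq M_0$. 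Let $M^*$ be the substructure of $M'$ whose universe is $M$ together with the realizations of all the $\bar y^\alpha$. By Tarski--Vaught applied to $M' \succeq M^*$ --- here using that mutual algebraicity together with the ``no crossing edge'' clauses in $\Sigma$ prevents elements of $M' \setminus M^*$ from being required witnesses for existential formulas over $M^*$ --- the substructure $M^*$ is elementary in $M'$, hence $M^* \succeq M$. By construction each realization of $\bar y^\alpha$ generates a new infinite MA-connected component of $M^*$, and the obvious bijections swapping the superscripts are isomorphisms over $M$ between any two of them.

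The main obstacle is the first step: extracting the repeatable infinite template $C$ from the bare hypothesis of non-cellularity. This is where the full structural content of [cma] is needed, and is considerably more delicate than the compactness and Tarski--Vaught steps that follow. In particular, one must handle both the case where $M$ already contains an infinite MA-connected component and the more intricate case where all MA-connected components of $M$ are finite but non-cellularity produces a convergent family of larger and larger finite components, whose limit type then supplies $C$ inside $M_0$.
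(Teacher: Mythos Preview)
The paper does not contain a proof of this theorem: it is quoted from the companion paper \cite{cma} and used as a black box (note the citation in the theorem header and the absence of any proof environment following it). So there is nothing in the present paper to compare your proposal against.

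That said, your outline is broadly the right shape for how such a result is established, and you correctly identify the first stage---extracting a repeatable infinite template component from the bare hypothesis of non-cellularity---as the substantive step; this is precisely the content of \cite{cma}, and your sketch does not supply it. Two technical points in the later stages also deserve more care than you give them. First, the consistency of $\Sigma$ is not just a matter of saturation: you need to argue that one can realize the atomic diagram of $C$ over $M$ \emph{with no additional atomic relations} holding between the copy and $M$ (or between distinct copies), and this uses specific features of mutually algebraic theories beyond the bounded-degree observation you cite. Second, your Tarski--Vaught step is not a routine application of the test: the claim that $M$ together with a union of full MA-connected components of $M'$ is elementary in $M'$ is a genuine theorem about mutually algebraic structures (essentially, that types over such a set are determined by their restrictions to the individual components and to $M$), not something one reads off from ``no crossing edges''. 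Both facts are available in the literature on mutual algebraicity, but your write-up treats them as self-evident when they are not.
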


\begin{proposition}  If $M$ is not cellular then there is an age-preserving extension $N$ with $2^{\aleph_0}$ siblings.  In the case where
$M$ is mutually algebraic, $N$ can be chosen to be an elementary extension of $M$.
\end{proposition}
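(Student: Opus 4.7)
The plan is to split into two cases according to whether $M$ is mutually algebraic, and in each case invoke results already established in the paper. Both cases produce an age-preserving $N \supseteq M$ with $2^{\aleph_0}$ siblings; in the mutually algebraic case the $N$ produced will moreover be an elementary extension of $M$.

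First, suppose $M$ is not mutually algebraic. Since being cellular implies being mutually algebraic (noted in the paragraph following Definition \ref{def:ma}), we are certainly in a non-cellular situation, but we do not need that here. We simply apply Theorem \ref{thm:not ma} directly to $M$, obtaining an age-preserving $N \supseteq M$ with $2^{\aleph_0}$ siblings.

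Now suppose $M$ is mutually algebraic but not cellular. Apply Theorem \ref{thm:cma} to obtain an elementary extension $M^* \succ M$ whose universe is $M$ together with infinitely many new infinite MA-connected components, pairwise isomorphic over $M$. In particular, $M^*$ is age-preserving over $M$ (any elementary extension is age-preserving). Since mutual algebraicity is a property of the (universal) theory, $M^*$ remains mutually algebraic, so we may apply Lemma \ref{cor} to $M^*$ with the infinite family $\set{C_i : i \in \omega}$ of new pairwise isomorphic infinite MA-connected components as witness. The conclusion is that $M^*$ has $2^{\aleph_0}$ siblings, and $M^*$ is an elementary extension of $M$ as required.

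There is no real obstacle here; the work was done in assembling Theorem \ref{thm:not ma} (to handle the non-mutually algebraic case) and in the companion paper's Theorem \ref{thm:cma} together with Lemma \ref{cor} (to handle the mutually algebraic non-cellular case). The only point worth double-checking is that applying Lemma \ref{cor} to $M^*$ yields siblings of $M^*$, which are automatically age-preserving extensions in the sense needed, and that the elementary extension claim in the mutually algebraic case is preserved through this construction, which it is since the extension $M^*$ is produced directly by Theorem \ref{thm:cma} as elementary.
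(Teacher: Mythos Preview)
Your proposal is correct and follows essentially the same approach as the paper: split on mutual algebraicity, invoke Theorem~\ref{thm:not ma} in the non-mutually-algebraic case, and in the mutually algebraic non-cellular case apply Theorem~\ref{thm:cma} followed by Lemma~\ref{cor}. The extra remarks you include (that cellular implies mutually algebraic, that $M^*$ remains mutually algebraic) are harmless but not needed for the argument.
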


\begin{proof}  Suppose $M$ is not cellular.  If $M$ is not mutually algebraic, then we are done by Theorem \ref{thm:not ma}.
  
If $M$ is mutually algebraic but non-cellular, then produce $M^* \succ M$ as in Theorem \ref{thm:cma}. By Lemma \ref{cor}, $M^*$ has $\cont$ siblings.
\end{proof}

\subsection{The cellular case}

In this subsection, we will be able to directly consider the siblings of $M$, rather than of some age-preserving extension.

\begin{example}
Consider the cellular graph $M$ consisting the disjoint union of infinitely many disconnected edges and an infinite independent set. Here, we may obtain $\az$ siblings as follows. First, we pass to the subgraph $N$ removing the independent set, which will be a sibling of $M$. Then, for each $i \in \omega$, we obtain a sibling $N_i$ by removing a point from $i$ of the edges.
\end{example}

Note that in a cellular partition (Definition \ref{def:cell}), for a fixed $i \in [n]$, $\set{\cbar_{i,j}: j \in \omega}$ is a $k_i$-clique.

\begin{definition}
 A cellular partition is {\em separated} if for every $i \in [n]$, there is no proper subtuple of $\cbar_{i,0}$ such that the set of associated subtuples amongst $\set{\cbar_{i,j}: j \in \omega}$ forms a $k$-clique.
\end{definition}

Given a cellular partition, we may always produce a separated cellular partition by increasing $n$ and splitting apart any offending tuples.

Suppose $M$ is cellular, with  cellular partition $K \cup \set{\cbar_{i,j}: i \in [n], j \in \omega}$. Given some $\cbar_{i,j}$ and $S \subseteq [k_i]$, let $\cbar_{i,j}^S = (c_{i,j}^\ell | \ell \in S) \subseteq \cbar_{i,j}$. Then every substructure $N \subseteq M$ is specified by $N \cap K$ as well as, for each $i \in [n]$ and $S \subseteq [k_i]$, the number of $j$ such that $N \cap \cbar_{i,j} = \cbar_{i,j}^S$.

Recall that $M$ is finitely partitioned if and only if $|\cbar_{i,j}|=1$ for every $i$.

\begin{lemma}
If $M$ is cellular and not finitely partitioned, then $M$ has $\az$ siblings.
\end{lemma}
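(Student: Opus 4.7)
Plan. Since $M$ is cellular but not finitely partitioned, fix a separated cellular partition $K \sqcup \set{\cbar_{i,j} : i \in [n], j \in \omega}$ and pick some $i \in [n]$ with $k_i \geq 2$; such an $i$ exists by hypothesis. For each $m \in \omega$, let $N_m$ be the substructure of $M$ with universe $M \setminus \set{c_{i,j}^\ell : j < m,\ \ell \geq 2}$: we retain only the first coordinate of each of the first $m$ type-$i$ cells and leave the rest of $M$ intact. The claim will be that $\set{N_m : m \in \omega}$ is an infinite family of pairwise non-isomorphic siblings of $M$.

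The sibling claim is reasonably direct. The inclusion $N_m \hookrightarrow M$ is immediate. For $M \hookrightarrow N_m$, the shift $\sigma : \omega \to \omega$, $j \mapsto j+m$, is not surjective, but the map $\sigma_i^* : M \to M$ induced analogously to Definition \ref{def:cell}(2) (fixing $M \setminus \bigcup_j \cbar_{i,j}$ and sending $\cbar_{i,j}$ to $\cbar_{i, j+m}$ coordinate-wise) is still an embedding: on any finite $X \subseteq M$ it agrees with some bona fide cellular automorphism $\hat\sigma_i^*$ arising from a bijection $\hat\sigma$ of $\omega$ extending $\sigma$ on the finitely many indices of type-$i$ cells touching $X$, so quantifier-free types are preserved. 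Its image $\sigma_i^*(M) = M \setminus \set{c_{i,j}^\ell : j < m,\ \ell \in [k_i]}$ lies inside $N_m$.

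Pairwise non-isomorphism is the main challenge. Letting $\chi(\xbar)$ be the quantifier-free type of $\cbar_{i,0}$, the natural invariant is the count of realizations in $N_m$ of
\[\psi(x) \;:=\; \tp_K(c_{i,0}^1)(x) \;\wedge\; \lnot \exists x_2, \dots, x_{k_i}\ \chi(x, x_2, \dots, x_{k_i}).\]
The $m$ truncated coordinates $c_{i,j}^1$ with $j < m$ realize $\psi$ in $N_m$ because their cell extension has been removed, while the intact $c_{i,j}^1$ with $j \geq m$ do not (their own cell witnesses the existential). If the count is exactly $m$, we are done. The principal obstacle is ruling out extraneous realizations of $\psi$ in $N_m$: a priori, a coordinate of some other cell type $i'$ might share $\tp_K(c_{i,0}^1)$ while failing to extend via $\chi$. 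Separatedness of the cellular partition enters here, restricting the ways $\chi$ can be (or fail to be) witnessed. In the remaining corner case, when there is another cell type $i_0$ with $k_{i_0} = 1$ whose coordinates share the qf-$K$-type of $c_{i,0}^1$, one instead constructs $N_m$ by restricting the number of type-$i_0$ cells to $m$; siblingship is preserved via a half-cell embedding $\cbar_{i_0,0} \hookrightarrow \cbar_{i,0}$ (available since $k_i \geq 2$), and the count of type-$i_0$ cells is itself a first-order invariant distinguishing the $N_m$'s.
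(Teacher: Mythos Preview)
Your overall strategy---truncate the first $m$ cells of a fixed type $i$ with $k_i\ge 2$, then distinguish the resulting $N_m$ by some count involving the ``orphaned'' first coordinates---is the same as the paper's. The sibling argument via the shift embedding is fine. The gap is in the non-isomorphism step.

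Your proposed invariant $\psi(x)$ has parameters from $K$: it begins with $\tp_K(c_{i,0}^1)(x)$. But an isomorphism $h\colon N_m\to N_{m'}$ has no reason to fix $K$ pointwise or even setwise, so ``the number of realizations of $\psi$'' is not an isomorphism invariant as written. You would need to show that for \emph{every} tuple $\kbar$ of the relevant shape in $N_{m'}$, the count of $\psi(x;\kbar)$ is never $m$; nothing in your argument addresses this. The paper avoids the issue entirely by using a parameter-free invariant: exchangeability of singletons is definable without parameters, so ``the size of the largest finite maximal $1$-clique'' is a genuine isomorphism invariant. The truncated first coordinates $c_0,\dots,c_\ell$ form a $1$-clique in $M^*_\ell$ (the automorphism of $M$ swapping $\cbar_{i,0}$ and $\cbar_{i,1}$ restricts to one of $M^*_\ell$ swapping $c_0,c_1$), and separatedness is used \emph{precisely} to show that no coordinate of any cell with $k_{i'}>1$ can join this $1$-clique---that is exactly what ``the associated subtuples do not form a clique'' rules out. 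This is the concrete role of separatedness that your ``separatedness enters here'' gestures at but does not supply.

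Two secondary points. First, even granting the invariant, you have not shown that the truncated $c_{i,j}^1$ actually satisfy the second conjunct of $\psi$: having removed $c_{i,j}^2,\dots,c_{i,j}^{k_i}$ does not by itself prevent $c_{i,j}^1$ from extending to a $\chi$-tuple using other elements of $N_m$. Second, your corner case is not justified: sharing the qf-type over $K$ with $c_{i,0}^1$ is far weaker than what is needed for the ``half-cell embedding'' $\cbar_{i_0,0}\hookrightarrow\cbar_{i,0}$ to exist (you need compatibility with \emph{all} of $M$, not just $K$). The paper handles this by removing \emph{all} such $1$-cells that are exchangeable with the truncated coordinates---exchangeability being exactly the condition that lets one re-embed them---rather than keeping $m$ of them.
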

\begin{proof}
By the discussion above, a cellular structure has at most $\az$ siblings. Let $K \cup \set{\cbar_{i,j} : i \in [n], j \in \omega}$ be a separated cellular partition of $M$. As $M$ is not finitely partitioned, there is some $i$ such that $|\cbar_{i,j}| > 1$. Fix some $\ell \in \omega$, for each $i,j$ let $c_j$ be the first element of $\cbar_{i,j}$, and let $M_\ell = M \bs \set{\cbar_{i,j}\bs c_j: j \leq \ell}$. For any $i'$ such that $|\cbar_{i',j}| = 1$ and $\set{c_j : j \leq \ell} \cup \set{\cbar_{i',j}: j \in \omega}$ is a $1$-clique, remove all $\cbar_{i',j}$, and let $M^*_\ell$ be the resulting structure. Note $M^*_\ell$ is a sibling of $M$.

We now show there is no $m \in M^*_\ell \bs K$ such that $m \sim c_j$ for some $j \leq \ell$. Suppose there is, and $m$ is the $k^{th}$ element of $\cbar_{i',j'}$ for some $i' \in [n]$ and $j' \in \omega$.  Then $c_j$ will be exchangeable with the $k^{th}$ element of $\cbar_{i',j''}$ for every $j'' \in \omega$, and so these elements will form a $1$-clique. If $|\cbar_{i', j'}| = 1$, this contradicts the construction of $M^*_\ell$. If $|\cbar_{i', j'}| >1$, this contradicts that we started with a separated cellular partition.

Let $C_\ell$ be the maximal $1$-clique in $M^*_\ell$ containing $\set{c_j: j \leq \ell}$. Then $C_\ell \subseteq K \cup \set{c_j: j \leq \ell}$ by the previous paragraph, so $\ell \leq |C_\ell| \leq |K| +\ell$. In $M^*_\ell$, any $1$-clique containing a point outside $K \cup \set{c_j: j \leq \ell}$ is either a singleton or infinite, since, as in the previous paragraph, if $x \sim y$ where $y$ is the $k^{th}$ coordinate of $\cbar_{i',j'}$, then $x$ is exchangeable with the $k^{th}$ element of $\cbar_{i',j''}$ for every $j'' \in \omega$. Thus for $\ell > |K|$, $C_\ell$ will be the largest maximal finite $1$-clique of $M^*_\ell$ . By the bounds above on $|C_\ell|$, if $\ell' > |K|+\ell$, then $|C_{\ell'}| > |C_{\ell}|$, and so $M^*_{\ell'} \not\cong M^*_{\ell}$, since their largest maximal finite $1$-cliques have different sizes. Thus, by varying $\ell$, we may produce $\az$ siblings of $M$.
\end{proof}

\begin{lemma}
If $M$ is finitely partitioned, then $M$ has one sibling, namely itself.
\end{lemma}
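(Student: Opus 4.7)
My plan is to use the rigidity of finitely partitioned structures combined with the preservation of quantifier-free type counts under bi-embeddings.

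Let $(C_1, \dots, C_n)$ witness that $M$ is finitely partitioned. The key rigidity is that the quantifier-free type over $\emptyset$ of any tuple in $M$ is entirely determined by its \emph{partition type}---the function sending each coordinate to the part $C_i$ containing it, together with the equality pattern---because $\Pi_i Sym(C_i) \subseteq Aut(M)$ acts transitively on tuples of each partition type. Hence $M$ is determined up to isomorphism by the sizes $(|C_1|, \dots, |C_n|)$ together with the partition-type ``template'' (which specifies, for each partition type, the quantifier-free type it realizes), modulo template-preserving permutations of the parts.

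Let $N$ be a sibling of $M$ with embeddings $f\colon M \to N$ and $g\colon N \to M$. Replace $N$ by $g(N)$, so $N \subseteq M$. As in similar substructure arguments in this paper, the partition $(N \cap C_i)_i$ witnesses that $N$ is also finitely partitioned: any $\sigma \in \Pi_i Sym(N \cap C_i)$ extends via the identity on $M \setminus N$ to an element of $\Pi_i Sym(C_i) \subseteq Aut(M)$, whose restriction to $N$ is an automorphism. The template of $N$ is inherited from that of $M$, since atomic relations on tuples of a given partition type are the same in $N$ as in $M$.

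For each quantifier-free type $\tau$ over $\emptyset$ of any arity $k$, let $U_\tau^M$ and $U_\tau^N$ denote its sets of realizations in $M$ and $N$. Since $f$ preserves atomic relations and $f(M) \subseteq N$, we have $|U_\tau^M| = |f(U_\tau^M)| \leq |U_\tau^N| \leq |U_\tau^M|$, forcing $|U_\tau^N| = |U_\tau^M|$. Combined with the determination of quantifier-free types by partition types, this forces the multiset of part-sizes of $N$, within each template-equivalence class of parts, to coincide with that of $M$. Any bijection $\phi\colon N \to M$ sending each $N \cap C_i$ to some $C_{\pi(i)}$ of the same size, for a template-preserving permutation $\pi$, then preserves quantifier-free types and so is an isomorphism. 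The main obstacle is extracting the number of parts of each size within a template-equivalence class from the matching type counts: this is handled by comparing counts of $k$-tuples whose partition type involves $k$ distinct parts of the same template---varying $k$ and the multiplicities across the parts pins down the configuration of part sizes within the class uniquely.
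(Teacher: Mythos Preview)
Your reduction to quantifier-free type counting has a genuine gap in the final step. The claim that equality of $|U_\tau^M|$ and $|U_\tau^N|$ for all $\tau$ forces the multisets of part-sizes within each template-equivalence class to agree is false once some parts are infinite, because cardinal arithmetic with $\aleph_0$ destroys all the finite information. Concretely, let $M$ be an equivalence relation with four classes of sizes $\omega,\omega,1,4$ (the partition into classes witnesses that $M$ is finitely partitioned, all parts lying in a single template class). Let $N\subseteq M$ be obtained by removing one point from the size-$4$ class, so $N$ has classes of sizes $\omega,\omega,1,3$. For \emph{every} quantifier-free type $\tau$ with at least one free variable, both $|U_\tau^M|$ and $|U_\tau^N|$ equal $\aleph_0$: the two infinite classes already provide infinitely many realizations of every partition pattern. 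In particular your suggested counts of $k$-tuples spread across $k$ distinct parts are $k!\cdot e_k(\omega,\omega,1,4)=\aleph_0$ and $k!\cdot e_k(\omega,\omega,1,3)=\aleph_0$, identical. So from $N\subseteq M$ and $|U_\tau^N|=|U_\tau^M|$ alone you cannot conclude $N\cong M$; yet these are the only facts your argument uses at that point. (Of course this particular $N$ is not a sibling of $M$, so the lemma is not contradicted --- but your proof never uses siblinghood beyond deriving the type-count equalities, and those equalities are too weak.)

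The paper's proof avoids this by passing to the $\omega$-categorical model companion $M^*$ of $M$. Model-completeness forces every element of $\acl(\emptyset)$ to be existentially algebraic, so any substructure of $M^*$ with the same age must already contain $K=\acl(\emptyset)$; the remaining parts of the cellular partition of $M^*$ are each forced to be infinite by the age. Thus every sibling of $M^*$ is isomorphic to $M^*$, and since $M$ and $M^*$ are siblings the same holds for $M$. The point is that passing to $M^*$ collapses all the troublesome finite parts into $\acl(\emptyset)$, where they are pinned down by the age rather than by any counting. If you want to salvage a direct argument along your lines, you would need to replace the type-count step with something that genuinely uses bi-embeddability (not just its cardinal consequences) to control the finite parts.
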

\begin{proof}
As $M$ is $\omega$-categorical, it admits an $\omega$-categorical model-companion $M^*$ \cite{Sar}. Then $M^*$ is a sibling of $M$, so it suffices to show $M^*$ has only one sibling. 

As being finitely partitioned is a universal property, $M^*$ is also finitely partitioned, and so admits a cellular partition with $K = \acl(\emptyset)$, and $|\cbar_{i,j}| = 1$ for each $i \in [n]$, so let $c_{i,j}$ be the one element of $\cbar_{i,j}$. We may further assume that we have taken $n$ minimal (subject to $|\cbar_{i,j}| = 1$), and thus $tp(c_{i,j}/K) \neq tp(c_{i',j}/K)$ for $i \neq i'$.

As $M^*$ is model-complete, every $x \in K$ is algebraic by an existential formula, so any substructure with the same age must contain all of $K$. The age of $M^*$ also specifies $\set{c_{i,j}}$ is infinite for each $i$, so any substructure with the same age is isomorphic to $M^*$.
\end{proof}

\subsection{The main theorem}

Putting together the results of this section, we have our main theorem.

\begin{theorem} \label{thm:main2}
Let $T$ be a universal theory in a finite relational language. Then one of the following holds.
\begin{enumerate}
\item $T$ is finitely partitioned. Every model of $T$ has one sibling.
\item $T$ is cellular. The finitely partitioned models of $T$ have one sibling and the non-finitely partitioned models have $\aleph_0$ siblings.
\item $T$ is not cellular. For every non-cellular $M \models T$, there is some $N \supseteq M$ such that $N \models T$ and $N$ has $2^{\aleph_0}$ siblings. Furthermore, if $T$ is mutually algebraic, we may take $N \succeq M$.
\end{enumerate}
\end{theorem}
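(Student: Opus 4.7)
The plan is to establish Theorem~\ref{thm:main2} by aggregating the case results proved above into a three-way split on the status of $T$. First I would note that both finite partitioning and cellularity are preserved under substructure, so they lift to properties of the universal theory $T$, and thus the three alternatives in the statement are mutually exclusive and exhaustive. It then suffices to verify the sibling count separately in each case.

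For case (1), I would simply invoke the last lemma of Section~6.2: every model of a finitely partitioned theory is finitely partitioned, and any finitely partitioned structure has exactly itself as a sibling.

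For case (2), where $T$ is cellular, the argument splits further according to whether a given $M \models T$ is finitely partitioned. If it is, case (1) gives one sibling. If it is cellular but not finitely partitioned, the preceding lemma of Section~6.2 yields $\aleph_0$ siblings; the matching upper bound $\le \aleph_0$ follows because, fixing a separated cellular partition $K \cup \set{\cbar_{i,j}}$, any substructure $N$ of $M$ is determined by the countable amount of data given by $N \cap K$ together with, for each $i \in [n]$ and each $S \subseteq [k_i]$, the cardinality of $\set{j : N \cap \cbar_{i,j} = \cbar_{i,j}^S}$.

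Case (3) is the substantive one, but by now all the real work is already done: for any non-cellular $M \models T$, I would apply the Proposition from Section~6.1, which supplies an age-preserving $N \supseteq M$ with $\cont$ siblings and moreover makes $N$ elementary over $M$ in the mutually algebraic subcase. Since $T$ is universal and $N$ shares its age with $M$, we obtain $N \models T$ automatically; when $T$ itself is mutually algebraic, $M$ inherits mutual algebraicity (again by substructure closure), so the elementary clause applies and the ``Furthermore'' statement follows. I do not anticipate any genuine obstacle in assembling these pieces; the real difficulty was proving the non-cellular input, handled in Sections~3--5 for the non-mutually algebraic case and (citing the companion paper) for the mutually algebraic non-cellular case.
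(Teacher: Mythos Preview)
Your proposal is correct and follows essentially the same approach as the paper, which simply states that the theorem is obtained by ``putting together the results of this section'' without spelling out the assembly; your write-up just makes explicit the case split and the invocations of the lemmas from Section~6.2 and the Proposition from Section~6.1 that the paper leaves implicit.
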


If $T$ admits a structure universal for its age, this immediately gives the following corollary.

\begin{corollary} \label{cor:universal2}
Let $M$ be a countable model in a finite relational language that is universal for its age. Then one of the following holds.
\begin{enumerate}
\item $M$ is finitely partitioned, and has one sibling.
\item $M$ is cellular but not finitely partitioned, and has $\aleph_0$ siblings.
\item $M$ is not cellular, and has $\cont$ siblings.
\end{enumerate}
\end{corollary}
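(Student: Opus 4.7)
The plan is to derive this directly from Theorem \ref{thm:main2}, applied with $T$ taken to be the universal theory of $M$, using universality of $M$ to transfer sibling counts from age-preserving extensions of $M$ back down to $M$ itself. The trichotomy for $M$ lines up with the trichotomy for $T$ in the theorem, since being finitely partitioned and being cellular are preserved under substructures and are therefore properties of the universal theory.

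Cases (1) and (2) are essentially immediate. If $T$ is finitely partitioned, Theorem \ref{thm:main2}(1) directly says $M$ has one sibling. If $T$ is cellular but not finitely partitioned, then $M$ itself cannot be finitely partitioned (else $T$ would be), so $M$ falls under the ``non-finitely partitioned'' clause of Theorem \ref{thm:main2}(2) and has $\aleph_0$ siblings.

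The substantive case is (3). Apply Theorem \ref{thm:main2}(3) to the non-cellular $M$ to obtain an extension $N \supseteq M$ with $N \models T$ and with $N$ having $\cont$ siblings. I claim every sibling of $N$ is already a sibling of $M$, which suffices since $M$ is countable and hence has at most $\cont$ siblings. Fix a sibling $N'$ of $N$. Then $M \subseteq N$ embeds into $N'$. Conversely, $N'$ has the same age as $N$, which equals the age of $M$ (the inclusion $M \subseteq N$ gives one direction and $N \models T$ gives the other), so $N'$ embeds into $M$ by universality of $M$ for its age. Hence $N'$ is bi-embeddable with $M$, i.e.\ a sibling of $M$. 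Since non-isomorphic siblings of $N$ remain non-isomorphic as siblings of $M$, we obtain $\cont$ siblings of $M$.

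The only potential obstacle is verifying the age-equality used in the transfer, but this is a routine unpacking: any sibling $N'$ of $N$ has the same age as $N$ by definition of siblings, and $N$ has the same age as $M$ because $N \supseteq M$ realizes every finite structure in the age of $M$, while $N \models T$ realizes no others.
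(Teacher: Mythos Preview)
Your proof is correct and matches the paper's intended argument; the paper does not give a separate proof but simply states that the corollary follows immediately from Theorem~\ref{thm:main2}, and you have correctly unpacked what ``immediately'' means. One small streamlining: in case~(3) you can avoid checking each sibling $N'$ of $N$ individually by first observing that $N$ itself is a sibling of $M$ (since $M\subseteq N$ and, as $N$ is age-preserving, universality of $M$ gives an embedding $N\hookrightarrow M$), and then using that bi-embeddability is an equivalence relation so $M$ and $N$ have identical sibling classes.
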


A weakening of ``finite relational language'' is given in the following definition.

\begin{definition}
We say {\em $M$ has finite profile} if, for every $n$, the number of isomorphism types of substructures of size $n$ is finite.
\end{definition}

We now show the assumption of a finite relational language in Corollary \ref{cor:universal2} cannot be weakened to finite profile.

\begin{example} \label{ex:inf lang}
Let the language consist of one $n$-ary relation symbol $R_n$ for each $n \in \omega$. Let $\xbar_n = (x_n^1, \dots, x_n^n)$. Let $M = \bigsqcup_{n \in \omega} \xbar_n \sqcup \bigsqcup_{n \in \omega} y_n$, where $R_n(\xbar)$ holds iff $\xbar = \xbar_n$, and the $y_n$ form an independent set.

$M$ is not $\omega$-categorical, as $x_n^i$ and $x_m^j$ have different (non-quantifier-free) 1-types for $n \neq m$. For each $n$, the isomorphism type of $n$ points is determined by which tuples $\xbar_i$ for $i \leq n$ they contain, and so $M$ has finite profile. That $M$ is universal for its age is clear by inspection.

Age-preserving extensions of $M$ can only add further points to the independent set, and so the only sibling of $M$ is itself. As $M$ is not $\omega$-categorical, it is not finitely partitioned, nor even cellular.
\end{example}

As noted in \cite{sib}, Corollary \ref{cor:universal2} implies the same conclusion with the hypothesis that $M$ is universal for its age replaced with the hypothesis that $M$ is $\omega$-categorical, since we may then pass to the model companion of $M$.

We also obtain a positive answer to a question from \cite{sib} as another corollary of our result. The proof simply goes through each case of Theorem \ref{thm:main2}, which immediately implies the corresponding case of the corollary.

\begin{corollary}
	For an age $\Age$ in a finite relational language, let $(Mod(\Age), \leq)$ be the countable structures with age $\Age$, quasi-ordered by embeddability. Then for every $M \in Mod(\Age)$, the number of structures $\leq$-above $M$ is equal to $|Mod(\Age)|$.
\end{corollary}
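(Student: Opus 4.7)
The plan is to apply Theorem \ref{thm:main2} to the universal theory $T = T_\Age$ of $\Age$ and verify the equality case-by-case. The basic observation is that any sibling of a structure in $Mod(\Age)$ again lies in $Mod(\Age)$ (since bi-embeddable structures share the same age), and if $M \leq N$, then every sibling of $N$ is $\leq$-above $M$ by the transitivity of embeddability.

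In cases 1 and 2 of Theorem \ref{thm:main2} (where $T$ is cellular), every $M \in Mod(\Age)$ is cellular, since cellularity passes to substructures. In case 1 ($T$ finitely partitioned), the partition data is determined by $\Age$, so $|Mod(\Age)| = 1$ and the only element of $Mod(\Age)$ that is $\leq$-above $M$ is $M$ itself. In case 2 ($T$ cellular, not finitely partitioned), I would first show that every $M \in Mod(\Age)$ is non-finitely-partitioned---otherwise such an $M$ would be universal for $\Age$, so every model of $T$ would embed into $M$ and hence be finitely partitioned, contradicting the case hypothesis. Theorem \ref{thm:main2}(2) then gives $\az$ siblings of $M$, all of which lie in $Mod(\Age)$ and are $\leq$-above $M$; combined with the bound $|Mod(\Age)| \leq \az$ coming from countability of isomorphism classes of cellular structures, both $|Mod(\Age)|$ and the number of structures $\leq$-above $M$ equal $\az$.

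In case 3 ($T$ not cellular), an analogous argument shows every $M \in Mod(\Age)$ is non-cellular. Theorem \ref{thm:main2}(3) then yields $N \supseteq M$ with $N \models T$ and $\cont$ siblings; each sibling lies in $Mod(\Age)$ (since $Age(N) = \Age$ and siblings preserve age) and is $\leq$-above $M$ (since $M \leq N$ and $N$ embeds into each sibling), so both $|Mod(\Age)|$ and the number of structures $\leq$-above $M$ equal $\cont$. The main obstacle is the universality claim used in cases 2 and 3, namely that a cellular (respectively, finitely partitioned) element of $Mod(\Age)$ is universal for $\Age$: this follows from the flexibility of cellular embeddings, in which any finite structure in $\Age$ fits into $M$'s cellular decomposition, so that a back-and-forth argument then embeds any countable $N$ with $Age(N) \subseteq \Age$ into $M$.
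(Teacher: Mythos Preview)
Your overall strategy---case analysis via Theorem \ref{thm:main2}---is exactly what the paper does, and your upper bounds on $|Mod(\Age)|$ in the cellular cases are correct. The gap is in your justification of the ``universality claim'' that a cellular (or finitely partitioned) $M$ with age $\Age$ is universal for $\Age$. Your proposed back-and-forth argument does not work: cellular structures are not ultrahomogeneous, so the forth step can fail. For instance, take $M$ the infinite star (center $c$, infinitely many leaves), which is finitely partitioned. If you map a first vertex to $c$, you cannot then extend by a second, non-adjacent vertex, since every other vertex of $M$ is adjacent to $c$. So ``any finite structure in $\Age$ fits into $M$'s cellular decomposition'' is not enough to run a one-point extension argument.

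You can sidestep the claim entirely. Any age $\Age$ (having JEP) admits a countable universal model $M^* \in Mod(\Age)$. Since every countable $N \models T$ embeds into $M^*$, the theory $T$ is cellular (resp.\ finitely partitioned) iff $M^*$ is, so Corollary \ref{cor:universal2} applies directly to $M^*$ and gives it $1$, $\aleph_0$, or $\cont$ siblings according to the case. Now for an arbitrary $M \in Mod(\Age)$ you have $M \leq M^*$, and every sibling of $M^*$ is $\geq M^* \geq M$; this yields the required lower bound on the number of structures above $M$ without ever needing to know whether $M$ itself is cellular. Combined with your upper bounds on $|Mod(\Age)|$ (equal to $1$ in the finitely partitioned case by the argument in the proof of the finitely-partitioned lemma, and $\leq \aleph_0$ in the cellular case), the equality follows in every case.
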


\section{Open questions}

\setcounter{conjecture}{0}

\begin{conjecture} [Thomass\'{e}, \cite{Thom}] \label{conj:Thom}
Given a countable structure $M$ in a countable relational language, $M$ has either 1, $\az$, or $\cont$ siblings, up to isomorphism.
\end{conjecture}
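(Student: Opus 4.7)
The plan is to layer increasingly structural assumptions over the countable-language, individual-structure setting of the conjecture, reducing to cases where the techniques of this paper may be adapted. The first step is to address the jump from finite to countable relational languages. Example~\ref{ex:inf lang} shows that even finite profile is insufficient to preserve the trichotomy once one leaves the age-universal setting, so I would localize to finite sublanguages $\LL_0 \subseteq \LL$: for each finite $\LL_0$, the reduct $M|_{\LL_0}$ and its siblings are governed by Theorem~\ref{thm:main2}, and one would try to show that siblings of $M$ correspond to compatible systems of siblings across the directed system $\set{M|_{\LL_0} : \LL_0 \subseteq \LL \text{ finite}}$. A compactness or inverse-limit argument is the natural tool, with the hope that the trichotomy passes to the limit.

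Restricting to a fixed countable $M$ in a finite relational $\LL$, the next step would follow this paper's case division. If the universal theory $T$ of $M$ is cellular, I would adapt the bookkeeping of \S\ref{sec:ma} to count siblings of $M$ itself: given a separated cellular partition of the canonical cellular model of $T$, each sibling of $M$ is specified by which finite multiplicities of each cell type are realized inside $M$, and a finite case analysis should yield $1$ or $\az$. When $T$ is not cellular, the goal is $\cont$ siblings of $M$. I would aim to \emph{internalize} the construction of \S\ref{sec:full}: instead of passing to an age-preserving $N \supseteq M$ containing a $(k,R)$-grid, one would try to detect an already-present grid-like configuration inside $M$ via the monoid of self-embeddings $e \colon M \to M$ and their iterated images, using the nesting $M \supseteq e(M) \supseteq e^2(M) \supseteq \dots$ to produce the room needed for the diagonalization of \S\ref{sec:notma}.

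The hard part, and the main obstacle, is precisely this internalization step. The entire machinery of \S\ref{sec:kcliq}--\ref{sec:full} constructs age-preserving extensions that need not embed back into $M$, while the rigid-but-complex examples flagged in the introduction, such as $(\omega, s, +, \times)$, show that non-cellularity of the universal theory alone cannot force many siblings of $M$. What is needed is a new invariant capturing not merely the complexity of $T$ but the extent to which that complexity is already witnessed inside $M$. Formulating such an invariant, perhaps in terms of the self-embedding monoid of $M$ or of how densely the ages of finite substructures of $M$ populate the universal theory, is where I expect any proof of Conjecture~\ref{conj:Thom} to require substantially new ideas; this is likely why the conjecture has resisted a general model-theoretic attack so far.
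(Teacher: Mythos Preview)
This statement is an open conjecture, not a theorem; the paper does not prove it and explicitly says that Conjecture~\ref{conj:Thom} ``seems outside the scope of the model-theoretic approach of this paper.'' So there is no proof in the paper to compare your proposal against, and you yourself concede in your final paragraph that the central step ``require[s] substantially new ideas.'' What you have written is therefore not a proof but an outline of where a proof might go, together with an honest acknowledgment of why it does not arrive.

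On the substance of the outline: your first step, reducing from a countable language to finite sublanguages via an inverse-limit or compactness argument, is not as innocent as you suggest. A sibling of $M$ need not restrict to a sibling of $M|_{\LL_0}$ for each finite $\LL_0$, and conversely a compatible system of $\LL_0$-siblings need not assemble into an $\LL$-sibling; the bi-embeddability relation does not pass through reducts in either direction in any controlled way. Your second step, the cellular case for an individual $M$, is more plausible and in fact the paper already handles it directly in \S7.2 without needing your bookkeeping. The real failure is the third step, and you have correctly diagnosed it: the example $(\omega,s,+,\times)$ from the introduction shows that a structure whose universal theory is maximally wild can nonetheless have a single sibling, so no amount of ``internalizing'' the grid-extension machinery can work uniformly. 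Your proposed invariant based on the self-embedding monoid is a reasonable direction to explore, but at present it is a name for the missing idea rather than the idea itself.
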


As mentioned in the introduction, Conjecture \ref{conj:Thom} seems outside the scope of the model-theoretic approach of this paper. However, an interesting special case to consider may be when $M$ is mutually algebraic. After naming finitely many constants, we may decompose $M$ into MA-connected components, which seem easy to analyze. However, the effect of naming the constants is mysterious.

\begin{problem}
Confirm Conjecture \ref{conj:Thom} when $M$ is mutually algebraic.
\end{problem}

As noted in the introduction, the arguments in this paper bear out the following intuition: if a universal theory $T$ is non-cellular, then either it is unstable and so has a model encoding $(\Q, <)$, or has a model that in some sense encodes an infinite partition, i.e. a partition with infinitely many infinite parts.

\begin{question} \label{q:enc}
	What is the proper notion of ``encodes an infinite partition'' to formalize the intuition above?
\end{question}

Even attempting to plausibly refine Conjecture \ref{conj:Thom} to describe which structures fall into which of the three cases seems difficult, but answering Question \ref{q:enc} may be helpful. We know that there are two reasons for a universal theory to have a model with $\cont$ siblings: either there is a model encoding a linear order with $\cont$ siblings (namely $(\Q, <)$) or a model ``encoding an infinite partition''.  Perhaps the same is essentially true at the level of individual models, although we must weaken the requirement of an infinite partition, since an equivalence relation with arbitrarily large finite classes also has $\cont$ siblings.

\begin{question} \label{q:cont}
	If a countable relational structure $M$ has $\cont$ siblings, must $M$ either encode a linear order with $\cont$ siblings, or either ``encode an infinite partition'' or ``encode a partition with arbitrarily large finite parts''  in the sense of Question \ref{q:enc}?
\end{question}

From \cite{chains}, we know exactly which countable linear orders have $\cont$ siblings; furthermore, the linear orders with $\cont$ siblings seem to either encode infinite partitions or partitions with arbitrarily large finite parts.

The final section of \cite{sib} and the introduction of \cite{MPW} contain several open problems, some of which we mention below.

A positive answer to the following conjecture would answer Problem 2 of \cite{MPW}. As mentioned there, Lachlan has proven that an age $\Age$ has a unique countable model up to elementary equivalence iff $\Age$ is finitely partitioned \cite{Lach}.

\begin{conjecture} \label{conj:elem eq}
All cases of Theorem \ref{thm:main2} can be strengthened to pairwise non-elementarily equivalent siblings. In particular, given an age $\mathfrak A$, there are $\cont$ non-elementarily equivalent countable structures of age $\Age$ iff $\mathfrak A$ is non-cellular.
\end{conjecture}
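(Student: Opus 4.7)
The plan is to handle each case of Theorem~\ref{thm:main2} in turn, arguing that the existing constructions either already produce, or can be modified to produce, pairwise non-elementarily equivalent siblings. The ``in particular'' clause then follows: cellular theories have at most $\aleph_0$ non-isomorphic countable models by the main result of \cite{MPW}, and in the other direction siblings preserve the age, so the strengthened theorem yields $\cont$ non-elementarily equivalent structures of age $\mathfrak A$ whenever $\mathfrak A$ is non-cellular.

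For \textbf{Case~1} (finitely partitioned), Lachlan's theorem already gives one countable model up to elementary equivalence. For \textbf{Case~2} (cellular, non-finitely partitioned), I would verify that the siblings $M^*_\ell$ of the paper already work: their largest maximal finite $1$-clique $C_\ell$ has size between $\ell$ and $|K|+\ell$, and ``there exists a maximal $1$-clique of size exactly $n$'' is first-order, so for $\ell > |K|$ this distinguishes $M^*_\ell$ from $M^*_{\ell'}$ whenever $|C_\ell| \neq |C_{\ell'}|$, yielding $\aleph_0$ pairwise non-elementarily equivalent siblings. For the \textbf{mutually algebraic subcase of Case~3}, the siblings $N_S$ of Lemmas~\ref{cor} and~\ref{suff} are distinguished by the set of finite MA-connected components $B_i$ they contain. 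After naming the constants from Theorem~\ref{thm:ma at}, which multiplies the number of equivalence classes by at most $\aleph_0$, each $B_i$ is finite in a language with mutually algebraic atomic relations, and ``there is an MA-connected component isomorphic to $B_i$'' is first-order: existentially quantify over the atomic diagram of $B_i$ and assert that no outside element stands in any relation with a subtuple of these witnesses. Thus $S \neq S'$ implies $N_S \not\equiv N_{S'}$, giving $\cont$ non-elementarily equivalent siblings.

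The main difficulty is the \textbf{non-mutually algebraic subcase of Case~3}. The siblings $N_f$ of Theorem~\ref{thm:not ma} are distinguished up to isomorphism by the non-first-order invariant ``has an infinitely extendable maximal $k$-clique of size $n$ iff $n \in \operatorname{Im}(f)$''. I would replace it by the first-order-expressible count $\mu_n(N_f) := |\{\text{maximal } k\text{-cliques of size exactly } n \text{ in } N_f\}| \in \omega \cup \{\infty\}$. Using Lemmas~\ref{lemma:ai max} and~\ref{lemma:only ai}, together with coordinate-wise non-algebraicity of the grid types (which rules out exchangeability between elements of $M'$ and elements of some $\AA_q$), one verifies that for $n \geq C$,
\[\mu_n(N_f) \;=\; \mu_n(M') \;+\; |\{q \in D : f(q) = n\}|.\]
When $\mu_n(M') < \infty$, this distinguishes $N_f$ from $N_g$ first-orderly whenever $n$ lies in the symmetric difference of $\operatorname{Im}(f)$ and $\operatorname{Im}(g)$.

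The hard part is ensuring $\mu_n(M')$ is finite for infinitely many $n \geq C$. I would try to modify the $k$-full extension of Lemma~\ref{full}: whenever the iterated construction threatens to produce infinitely many finite unextendable maximal $k$-cliques of some size $n$, absorb all but finitely many of them into an infinite $k$-clique via an additional clique-extension step, while preserving array-minimality of index $k$ and $(\le k)$-clique preservation. If this cannot be arranged uniformly, the fallback is to refine $\mu_n$ into a parameterized first-order invariant, counting only those maximal $k$-cliques $\BB$ of size $n$ for which there exists a tuple $\bar d$ satisfying a first-order condition making it behave like $\dbar_{q,r}$ (with each tuple of $\BB$ standing in relation $R$ to $\bar d$); chosen carefully, this isolates the grid cliques $\AA^*_q$ from any accidental $M'$-cliques with the same local structure. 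Either route would produce $\cont$ elementarily inequivalent siblings, completing the conjecture.
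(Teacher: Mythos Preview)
This statement is listed in the paper as an open \emph{conjecture}, not a theorem; there is no proof to compare against. The paper explicitly names the obstacle --- that whether a $k$-clique is infinitely extendable does not seem to be first-order --- and establishes only the special case where $\Age$ is the age of an $\omega$-categorical structure. In that case there is a uniform bound on the size of any $k$-clique already present in $M'$, so one may take all the shrunk cliques $\AA^*_q$ above that bound and distinguish the $N_f$ by the first-order sentence ``there is a maximal $k$-clique of size exactly $n$'', bypassing extendability altogether.

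Your treatment of Cases~1 and~2 and of the mutually algebraic subcase of Case~3 is plausible and goes beyond what the paper records, although the passage through constants in the mutually algebraic case needs more care than you give it: a single complete $\LL$-theory can have $\cont$ completions after naming constants, so ``multiplies the number of equivalence classes by at most $\aleph_0$'' is not the correct bookkeeping, and you would instead need to argue that the distinguishing sentences descend to $\LL$. The genuine gap, however, is the non-mutually-algebraic subcase. Your formula $\mu_n(N_f)=\mu_n(M')+|\{q\in D:f(q)=n\}|$ is not justified by the lemmas you cite: Lemma~\ref{lemma:only ai} classifies only the \emph{infinitely extendable} finite maximal $k$-cliques of $N_f$, so nothing rules out non-extendable maximal $k$-cliques of size $n$ that straddle $M'$ and the grid (involving the $\ebar_{q,r}$'s, for instance), and nothing guarantees that a maximal $k$-clique of $M'$ remains maximal in $N_f$ once such mixed tuples are available. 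And even granting the formula, the crux you yourself isolate --- arranging $\mu_n(M')<\infty$ for infinitely many $n$ --- is addressed only by two programmatic suggestions with no argument that either can actually be carried out. These are reasonable lines of attack, but as written they constitute a research plan rather than a proof, and the conjecture remains open.
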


The place where our proof falls short of this conjecture is that whether a $k$-clique is infinitely extendable does not seem to be definable. However, in some cases, considering infinite extendability is unnecessary; for example, if $M$ has only finitely many 1-types, in particular if $M$ is $\az$-categorical, then there is a bound $C$ on the size of $k$-cliques appearing in $M$. When constructing $N_f$ in Theorem \ref{thm:not ma}, we may always shrink our $k$-cliques above $C$, and distinguish $N_f$ from $N_g$ by whether it has a maximal $k$-clique of some particular size above $C$. Thus we have proven Conjecture \ref{conj:elem eq} in the case $\Age$ is the age of an $\az$-categorical structure.

Given an age $\Age$, let $Mod(\Age)/{\equiv}$ denote the bi-embeddability classes of countable structures with age $\Age$. Thomass\'{e}'s conjecture is concerned with the size of any single $\equiv$-class.  There are several conjectures regarding the number of $\equiv$-classes in \cite{sib}, from which we mention the following.

\begin{conjecture} [\cite{sib}]
For an age $\Age$ in a finite relational language, $|Mod(\Age)/{\equiv}|$ is finite if and only if $|Mod(\Age)/{\equiv}| = 1$ if and only if $\Age$ is cellular.
\end{conjecture}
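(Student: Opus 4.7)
The statement asserts three equivalent conditions; since $(ii) \Rightarrow (i)$ is immediate, it suffices to prove $(iii) \Rightarrow (ii)$ and the contrapositive $\neg(iii) \Rightarrow \neg(i)$.

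For $(iii) \Rightarrow (ii)$: if $\Age$ is cellular then every $N \in Mod(\Age)$ is cellular (cellularity being a property of the universal theory), and moreover the age condition forces each cell type appearing in $\Age$ to be realized with infinite multiplicity in $N$, since otherwise $\Age$ could not contain arbitrarily large configurations of that cell. Given $N, N' \in Mod(\Age)$, a cell-matching argument then produces mutual embeddings: each cell (or admissible sub-cell) of $N$ is sent to a fresh cell of the corresponding type in $N'$, exploiting the infinite multiplicities on both sides. The matching must tolerate some slack---shorter cells in one structure may map into initial segments of longer cells in the other---but the admissibility of such slack is precisely governed by $\Age$. Hence $Mod(\Age)/{\equiv}$ is a singleton.

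For $\neg(iii) \Rightarrow \neg(i)$: assuming $\Age$ is non-cellular, I would produce $\aleph_0$ pairwise non-bi-embeddable structures in $Mod(\Age)$, splitting on mutual algebraicity in parallel with the main theorem. In the mutually algebraic non-cellular case, apply Theorem \ref{thm:cma} to fix $M \in Mod(\Age)$ and $M^* \succeq M$ containing $\aleph_0$ new pairwise isomorphic infinite MA-connected components of some type $C$. For each $n \in \omega$, define $N_n \subseteq M^*$ by restricting all but $n$ of these infinite components to finite MA-sub-components of sizes cofinal in $\omega$, ensuring $N_n \in Mod(\Age)$. The number of infinite MA-components isomorphic to $C$ is a bi-embedding invariant---distinct MA-components cannot merge under embedding, and infinite components cannot embed into finite ones---so the $N_n$ lie in distinct $\equiv$-classes. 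In the non-mutually-algebraic case, fix via Proposition \ref{quote} an indiscernible $(k, R)$-grid extension $N^* \supseteq M$ with maximal infinite $k$-cliques $\set{\AA_q : q \in \Q}$ (maximality by Lemma \ref{lemma:ai max}); for each $n$, define $N_n$ by restricting all but $n$ of the $\AA_q$ to finite $k$-cliques of cofinal sizes. An argument paralleling Lemma \ref{lemma:only ai} shows that distinct infinite maximal $k$-cliques cannot merge under embedding without violating exchangeability, so the number of such cliques is a bi-embedding invariant and separates the $N_n$.

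\textbf{Main obstacle.} The principal challenge is producing a bi-embedding invariant that can be freely varied while preserving membership in $Mod(\Age)$. When $M$ already contains $\aleph_0$ infinite components (respectively cliques) of the relevant type, the naive count collapses to $\aleph_0$ uniformly, and one must instead first shrink some of $M$'s own infinite objects---verifying that this preserves the age---or adopt a more refined invariant, for instance parameterizing by specific average types (Lemma \ref{average}) or by an extendability-layer structure (Lemma \ref{good}). Designing such an invariant that is simultaneously preserved by embeddings and combinatorially flexible enough to take all finite values is the delicate point, echoing the obstruction noted in the paper for Conjecture \ref{conj:elem eq} regarding the non-definability of infinite extendability. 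A secondary subtlety lies in the cellular direction, where making the ``slack'' cell-matching fully rigorous across structures with different cellular partition data requires careful bookkeeping, though it appears to present no fundamental obstacle.
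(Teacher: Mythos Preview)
The statement you are attempting to prove is presented in the paper as an \emph{open conjecture} from \cite{sib}, appearing in the section on open questions; the paper offers no proof of it, so there is no ``paper's own proof'' against which to compare your attempt.

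On its own merits, your proposal has genuine gaps in the hard direction $\neg(iii)\Rightarrow\neg(i)$. In the non-mutually-algebraic branch you invoke Proposition~\ref{quote}, but that proposition carries the standing hypothesis that \emph{no} age-preserving extension of $M$ has $2^{\aleph_0}$ siblings. When that hypothesis fails, the paper's machinery gives you a single $\equiv$-class with $2^{\aleph_0}$ members, which says nothing about $|Mod(\Age)/{\equiv}|$; your argument simply does not address this case. More fundamentally, you yourself identify in the ``Main obstacle'' paragraph that your proposed invariant---the number of infinite MA-components (resp.\ infinite maximal $k$-cliques) of a fixed type---collapses whenever $M$ already contains infinitely many such objects, and you provide no concrete repair. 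That is not residual bookkeeping; it is exactly the obstruction that keeps the conjecture open, and it is closely related to the non-definability of infinite extendability flagged in the paper's discussion of Conjecture~\ref{conj:elem eq}. Your sketch of $(iii)\Rightarrow(ii)$ is more plausible, but note that the paper's sibling counts are for a \emph{fixed} model and do not directly yield that any two countable models of a cellular age are bi-embeddable; the cell-matching with ``slack'' would still need to be carried out in full.
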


If the conjecture above is true, then the only possibilities for $|Mod(\Age)/{\equiv}|$ are $\set{1, \az, \aleph_1, \cont}$ \cite{sib}. Classifying which ages fall into which case would be a natural next step.

For problems involving model-counting in an age, such as in this paper or the problem of determining $|Mod(\Age)|$ in \cite{MPW}, the dividing lines are preserved under arbitrary expansions by (finitely many) unary relations. This is clear after proving that these dividing lines correspond to being finitely partitioned or being cellular. However, if this could be proven as a first step, then the approach taken in this paper could be drastically simplified, since a non-mutually algebraic theory admits a model such that in a unary expansion there is a definable equivalence relation on singletons with infinitely many infinite classes. We then would not have to use grid extensions to mimic the behavior of such an equivalence relation, and would not have to worry about hybrid tuples.

\begin{question}
	Let $M$ be a countable structure in a finite relational language, and let $M^*$ be an expansion by finitely many unary relations. Let $\Age$ and $\Age^*$ be their respective ages. Can any of the following statements be proven without first classifying the dividing lines?
	\begin{enumerate}
		\item If $|Mod(\Age^*)| = \cont$, then $|Mod(\Age)|= \cont$.
		\item If $Mod(\Age^*)$ has a structure with $\cont$ siblings, then so does $Mod(\Age)$.
		\item If $|Mod(\Age^*)/{\equiv}|$ is infinite, then so is $|Mod(\Age)/{\equiv}|$.
	\end{enumerate}
\end{question}

  \bibliographystyle{asl}
  \bibliography{SiblingsBib}

\end{document}